\documentclass[11pt,a4paper]{amsart}
\input epsf

\usepackage{graphicx, amsmath, amssymb, amscd, amsthm, euscript, amsfonts, xcolor, latexsym, amsbsy}

\usepackage{tikz}
\usetikzlibrary{shapes.geometric, arrows.meta, positioning, calc}
\usetikzlibrary{calc,intersections}

\usepackage{hyperref}
\usepackage{float}
\usepackage{longtable}
\usepackage{subfigure}
\usepackage{graphics, geometry, caption}
\usepackage{makecell}

\newtheorem{thm}{Theorem}[section]
\newtheorem{cor}[thm]{Corollary}
\newtheorem{prop}[thm]{Proposition}
\newtheorem{lem}[thm]{Lemma}
\theoremstyle{definition}
\newtheorem{defn}[thm]{Definition}

\newcommand{\Rmnum}[1]{\uppercase\expandafter{\romannumeral #1}}

\title[On the type of a complex hyperbolic triangle group]{On the type of a complex hyperbolic triangle group}

\author[Wei Liao]{Wei Liao}
\address[Wei Liao]{School of Mathematics, Hunan University, Changsha 410082, P.R. China}
\email{liaowei.math@qq.com}

\author[John R. Parker]{John R. Parker}
\address[John R. Parker]{Department of Mathematical Sciences, Durham University, Durham DH1 3LE, United Kingdom}
\email{j.r.parker@durham.ac.uk}

\date{\today}

\keywords{Complex hyperbolic triangle groups}

\begin{document}

\begin{abstract}
In this paper we consider groups of complex hyperbolic isometries generated by three complex involutions. Our first main result states that if the group is discrete and at least two of the mirrors of complex reflections intersect, then the group may be put into a normal form involving the ordering of traces of short, even length words. Our second main result refines a conjecture of Schwartz (proved by Grossi) that determines the type of the group in terms of the angles between the mirrors. That is, in the parameter space of complex triangles with given angles, it determines which of two given words becomes elliptic first.
\end{abstract}

\maketitle

\section{Introduction}
A \emph{complex hyperbolic triangle group} is a subgroup of $\mathrm{SU}(2,1)$ generated by three distinct involutions $R_j$, $j=1,2,3$, whose mirrors are the complex geodesics $C_j$. If, for every $j=1,2,3$ (taken modulo $3$), the complex geodesics $C_{j-1}$ and $C_{j+1}$ intersect in $\mathbf H_{\mathbb C}^2 \cup \partial\mathbf H_{\mathbb C}^2$ at an angle $\pi/p_j$, where $p_j\in\mathbb N\cup\{\infty\}$ satisfies $p_j\ge2$, then it is called a \emph{complex hyperbolic $(p_1,p_2,p_3)$-triangle group}. When $C_{j-1}$ and $C_{j+1}$ are asymptotic at a point of $\partial{\bf H}^2_{\mathbb C}$ then the angle between them is 0 and we write $p_j=\infty$. If $\min\{p_1,\,p_2,\,p_3\}\ge 3$ and $1/p_1+1/p_2+1/p_3<1$, it is well known that there is a one real parameter family of such groups, and there is a parameter value where the group is discrete and preserves a Lagrangian plane. In a neighbourhood of this parameter value the group remains discrete. Schwartz \cite{sch1} asked about parameter values where discreteness fails. Assuming $3\le p_1\le p_2\le p_3$, Schwartz defined the following pair of words in the $(p_1,p_2,p_3)$-triangle  group $\Gamma(p_1,p_2,p_3)$:
$$
W_A=R_1R_3R_2R_3 \quad \text{and} \quad W_B=R_1R_2R_3.
$$
Schwartz says that $\Gamma(p_1,p_2,p_3)$ has 
\begin{enumerate}
  \item[(a)]   Type-$A$ if there is a parameter value where $W_A$ is parabolic and $W_B$ is loxodromic, and 
\item[(b)] Type-$B$ if there is a parameter value where $W_B$ is parabolic and $W_A$ is loxodromic. 
\end{enumerate}
Moreover, Schwartz conjectured that a triangle group is of Type-$A$ if $p_1<10$, and of Type-$B$ if $p_1>13$. This conjecture was subsequently confirmed by Grossi \cite{gro}. Our Theorem~\ref{main-thm-3} below refines this conjecture using $p_2$ as well as $p_1$.  

In this paper we will focus on groups where $W_A$ is elliptic or parabolic (and $W_B$ may be elliptic, parabolic or loxodromic). It is well known (see \cite{kpt}) that when 
$3\le\min\{p_1,p_2,p_3\}$ the trace of $R_1R_3R_2R_3$ may be used to parametrise the space of $(p_1,p_2,p_3)$-triangle groups. 
Observe that $R_1R_3R_2R_3$ is the product of reflections in $C_1$ and $C_{323}=R_3(C_2)$, 
and so when $R_1R_3R_2R_3$ is elliptic these two mirrors intersect. If the angle between
$C_1$ and $C_{323}$ is $\pi/p_4$ then we call $\langle R_1,R_2,R_3\rangle $
a \emph{complex hyperbolic $(p_1,p_2,p_3;p_4)$-triangle group}, which we denote 
by $\Gamma(p_1,p_2,p_3;p_4)$.

A basic question is whether every complex hyperbolic triangle group has a set of generators
that satisfy the following definition:

\begin{defn}\label{def-good}
A complex hyperbolic triangle group $\Gamma=\langle R_1,R_2,R_3\rangle$ is said to
be \emph{good} if the generators satisfy both of the following conditions:
\begin{enumerate}
\item[(a)] ${\rm tr}(R_2R_3)\le {\rm tr}(R_1R_3)\le {\rm tr}(R_1R_2)\le {\rm tr}(R_1R_3R_2R_3)$;
\item[(b)] if any of ${\rm tr}(R_2R_3)$, ${\rm tr}(R_1R_3)$, ${\rm tr}(R_1R_2)$, and ${\rm tr}(R_1R_3R_2R_3)$ is less than $3$, then it has the form $1+2\cos(2\pi/p)$ for some $p\in{\mathbb N}$ with $p\ge 2$.
\end{enumerate}
\end{defn}

In \cite{kpt}, Kamiya, Parker, and Thompson investigated certain automorphisms of 
complex hyperbolic triangle groups that determine when quadruples $(p_1,p_2,p_3;p_4)$ and
$(p'_1,p'_2,p'_3;p'_4)$ corresponding to the same group, but with different generating 
reflections.
Our first main result builds on the results from \cite{kpt} to show that when 
$\langle R_1,R_2,R_3\rangle$ is discrete and at least one pair of mirrors intersects, then
there is a set of generators $R'_1$, $R'_2$, $R'_3$ satisfying (a) and (b) above.

\begin{thm}\label{main-thm-1}
Any discrete non-elementary complex hyperbolic triangle group where at least one pair of mirrors intersect has a set of generators satisfying Definition~\ref{def-good}.
\end{thm}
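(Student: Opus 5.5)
Our plan is a descent argument that uses the automorphisms of \cite{kpt}, which replace one generator by its conjugate by another. These moves preserve the group $\Gamma$, and the traces of the words involved are real: for complex involutions in ${\rm SU}(2,1)$, ${\rm tr}(R_iR_j)$ is real and equals $1+2\cos(2\pi/p)$ exactly when the mirrors meet at angle $\pi/p$. The basic moves are permutations of the generators and substitutions such as
$$(R_1,R_2,R_3)\mapsto(R_1,\,R_3R_2R_3,\,R_3).$$
The second move exchanges the roles of ${\rm tr}(R_1R_2)$ and ${\rm tr}(R_1R_3R_2R_3)$ and fixes ${\rm tr}(R_1R_3)$ and ${\rm tr}(R_2R_3)$, since $(R_3R_2R_3)R_3$ is conjugate to $R_2R_3$. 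Using these moves, any ordering of the three pair traces can be arranged by a permutation. Among the four traces appearing in (a), the largest can always be moved to the slot ${\rm tr}(R_1R_3R_2R_3)$.

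First we settle condition (b). Suppose $\Gamma$ is discrete and two mirrors $C_i,C_j$ meet in $\mathbf H^2_{\mathbb C}$. Then $R_iR_j$ is elliptic of finite order with rotation angle a rational multiple of $\pi$. Discreteness of the stabiliser of $C_i\cap C_j$ forces the angle to be $\pi/p$ after replacing $R_j$ by a suitable element of $\langle R_i,R_j\rangle$ that is a complex involution. This step needs care, because an angle $k\pi/p$ with $\gcd(k,p)=1$ can be replaced by $\pi/p$ inside the dihedral group $\langle R_i,R_j\rangle$ via a generator change. So any trace less than $3$ can be normalised to the form $1+2\cos(2\pi/p)$. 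If a trace equals $3$ or exceeds it, (b) imposes nothing.

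Second we set up the descent. We define a complexity, for instance the sum of the three pair traces, or the lexicographically ordered triple of pair traces. We then apply the reduction: if ${\rm tr}(R_1R_3R_2R_3)<{\rm tr}(R_1R_2)$ after sorting, perform the substitution above, which strictly decreases the complexity. The hypothesis that at least one pair of mirrors intersects gives at least one trace below $3$. Since these traces lie in the discrete set $\{1+2\cos(2\pi/p)\}$, they have only finitely many possible values below any bound. Discreteness of $\Gamma$ also lets us bound how often traces can decrease among the finitely many values less than a given one. Together these should make the descent terminate. When it stops, the pair traces are sorted and the four-letter trace dominates, which is (a).

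The main obstacle is termination when some traces are at least $3$. Such values range over a continuum, so a strictly decreasing sequence need not stop. The first thing we would try is to show that a trace at least $3$ cannot be decreased indefinitely, using discreteness: the loxodromic or parabolic elements $R_iR_j$ have translation lengths bounded below. Equivalently, the distances between the relevant mirrors form a discrete set, by the Jørgensen or Shimizu-type inequalities for complex hyperbolic space. Failing that, we would use the intersecting pair as an anchor. Keeping one generator pair with elliptic product fixed, a finite dihedral group acts on the remaining generator, and we would choose within that finite orbit the configuration minimising the trace. This makes the search finite.
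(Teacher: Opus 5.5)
\paragraph{Gap: termination of the combined procedure.} You flag termination as the main obstacle, but the difficulty is not only that traces $\ge 3$ form a continuum. Conditions (a) and (b) cannot be settled one after the other, because the two kinds of move undo each other.

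\begin{itemize}
\item The normalisation step replaces $R_1$ by $(R_3R_1)^rR_3$. This raises ${\rm tr}(R_1R_3)$ from $1+2\cos(2k\pi/p)$ to $1+2\cos(2\pi/p)$. It also changes ${\rm tr}(R_1R_2)$ and ${\rm tr}(R_1R_3R_2R_3)$ in an uncontrolled way.
\item The reordering moves create new traces, for instance ${\rm tr}(R_1R_2R_3R_2)=z+w(y-x)$ when $R_2$ and $R_3$ are swapped. Such a trace may again be less than $3$ with $k\ge 2$, and must then be normalised again.
\end{itemize}

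So nothing forces your complexity (the sum, or the lexicographic triple, of the pair traces) to decrease along the interleaved procedure. This already happens for traces below $3$.

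Your two suggested fixes do not close this.
\begin{itemize}
\item A uniform lower bound on translation lengths, or an appeal to J\o rgensen/Shimizu-type inequalities, is not available for an arbitrary discrete $\Gamma$, and it would not address the non-monotonicity anyway.
\item The finite dihedral orbit only controls conjugation of $R_1$ by $\langle R_2,R_3\rangle$. The normalisation step produces a reflection outside that orbit.
\end{itemize}

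Two smaller points. You never explain how ${\rm tr}(R_2R_3)$ stays the smallest trace without permuting the anchored pair away. Also, your claim that the largest of the four traces can always be moved to the slot ${\rm tr}(R_1R_3R_2R_3)$ is unjustified, since permutations involving $R_3$ change that trace.

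\paragraph{The missing idea.} You need a geometric monotone quantity together with proper discontinuity. The paper proceeds as follows.

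\begin{enumerate}
\item Choose $R_2R_3$ to be elliptic of \emph{minimal} order $p_1$ among all elliptic products of two reflections in $\Gamma$, normalised to angle $\pi/p_1$. Never move the pair $\{R_2,R_3\}$.
\item Minimality gives $w\le x$ automatically at the end, since every normalised elliptic product of two reflections has trace at least $w$.
\item Set $v_1=C_2\cap C_3$. The reordering moves conjugate $R_1$ by isometries fixing $v_1$, so $\rho(v_1,C_1)$ is unchanged.
\item Normalisation keeps the vertex $C_1\cap C_3$, which lies on the line $C_3\ni v_1$, but lowers the angle there from $k\pi/p$ to $\pi/p$. The complex sine rule gives
\[
\sinh\bigl(\rho(v_1,C_1)/2\bigr)=\sin\theta\,\sinh\bigl(\rho(v_1,C_1\cap C_3)/2\bigr),
\]
where $\theta$ is the angle between $C_1$ and $C_3$. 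Hence the distance strictly drops.
\item If the procedure never stopped, infinitely many distinct reflections in $\Gamma$ would have mirrors meeting the closed ball of radius $\rho(v_1,C_1)$ about $v_1$. This contradicts proper discontinuity.
\end{enumerate}

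In trace terms, one computes
\[
\sinh^2\bigl(\rho(v_1,C_1)/2\bigr)=\frac{f_2+1}{2(3-w)}.
\]
So with $w$ fixed, the correct trace complexity is $(1-w)x+y+z$, not the plain sum. It is invariant under both reordering moves and strictly decreases under normalisation. Because it takes values in a continuum, monotonicity alone still does not give termination; the compactness argument in step 5 is needed.
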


A complex hyperbolic triangle group that is not isomorphic to any triangle group with a pair of intersecting mirrors is called a \emph{complex hyperbolic ultra-parallel triangle group}. Such groups were studied by Monaghan, Parker, and Pratoussevitch~\cite{mpp}, and by Liao and Xie~\cite{lx}. In these papers, after relabelling the generators, the authors constructed a large number of ultra-parallel triangle groups and showed that these groups are discrete if and only if the element $R_1R_3(R_2R_3)^n$ is non-elliptic for some specific positive integer $n$.

The necessity follows from the fact that, in an ultra-parallel triangle group, the element $R_1R_3(R_2R_3)^n$ must be nonelliptic. If this were not the case, we could use the involutions
$$
R_1,\qquad R_3(R_2R_3)^n,\qquad R_3(R_2R_3)^{n+1}
$$
as a new generating set. By definition, the resulting triangle group would not be ultra-parallel, yielding a contradiction.

In our second main result we give conditions under which $W_B$ is elliptic, parabolic or loxodromic in a given $(p_1,p_2,p_3;p_4)$-triangle group. In particular, for a $(p_1,p_2,p_3;\infty)$-triangle group $W_A$ is parabolic, and so the group has Type-$A$ if $W_B$ is loxodromic and Type-$B$ if $W_B$ is elliptic. Our result is

\begin{thm}\label{main-thm-2}
Let $p_1,\,p_2,\,p_3,\,p_4 \in{\mathbb Z}$ satisfy $3\le p_1\le p_2\le p_3\le p_4\le \infty$. Define
\begin{equation*}
\begin{aligned}
w &= 1 + 2\cos(2\pi/p_1), \quad & x &= 1 + 2\cos(2\pi/p_2), \\
y &= 1 + 2\cos(2\pi/p_3), \quad & z &= 1 + 2\cos(2\pi/p_4).
\end{aligned}
\end{equation*}
Let $\Gamma(p_1,p_2,p_3;p_4)\subset{\rm SU}(2,1)$ be the corresponding triangle group.
Then:
\begin{enumerate}
\item[(a)] For given $p_1$ and $p_2$ with 
$w+x\ge 50/9$, for all $p_3$ and $p_4$ the element $W_B$
of the group $\Gamma(p_1,p_2,p_3;p_4)$ is elliptic.
\item[(b)] For given $p_1$ and $p_2$ with $w+x\le 50/9$, there exist $p'_3$ and $p'_4$ so that for all $p_3\ge p'_3$ and $p_4\ge p'_4$ the element $W_B$ of the group $\Gamma(p_1,p_2,p_3;p_4)$  is loxodromic. In particular, this holds when
$$
\max\left\{2,\,x,\,\frac{9(w+x)-8}{14}\right\}\le y\le z.
$$
\end{enumerate}
\end{thm}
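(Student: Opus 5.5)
The plan is to reduce everything to an explicit computation with the trace of $W_B=R_1R_2R_3$ and then apply Goldman's classification of elements of ${\rm SU}(2,1)$ by trace. Write each $R_j$ as the complex involution $R_j(\mathbf z)=-\mathbf z+2\langle \mathbf z,\mathbf n_j\rangle\mathbf n_j$ with polar vectors normalised by $\langle\mathbf n_j,\mathbf n_j\rangle=1$. Then ${\rm tr}(R_iR_j)=4|\langle\mathbf n_i,\mathbf n_j\rangle|^2-1$. So if the mirrors $C_i,C_j$ meet at angle $\pi/p$, then $|\langle\mathbf n_i,\mathbf n_j\rangle|=\cos(\pi/p)$ and the trace is $1+2\cos(2\pi/p)$, consistent with the definitions of $w,x,y$.

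Up to conjugation and rescaling of the $\mathbf n_j$, the only remaining invariant is the argument $\theta$ of the triple product $\langle\mathbf n_1,\mathbf n_2\rangle\langle\mathbf n_2,\mathbf n_3\rangle\langle\mathbf n_3,\mathbf n_1\rangle$. I will compute
\[
{\rm tr}(R_1R_3R_2R_3)\quad\text{and}\quad \tau={\rm tr}(R_1R_2R_3)
\]
directly in terms of $r_{ij}=|\langle\mathbf n_i,\mathbf n_j\rangle|$ and $\theta$. The first trace is real and affine in $\cos\theta$, so setting it equal to $z$ solves for $\cos\theta$ as a rational expression in $w,x,y,z$. The second has the form $\tau=3-2\sum r_{ij}^2\cdot(\text{const})+8r_{12}r_{23}r_{31}e^{i\theta}$ up to signs to be fixed in the computation. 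Substituting gives $\tau$, hence ${\rm Re}(\tau^3)$ and $|\tau|^2$, as explicit functions of $w,x,y,z$.

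Next I apply Goldman's discriminant
\[
f(\tau)=|\tau|^4-8{\rm Re}(\tau^3)+18|\tau|^2-27,
\]
for which $W_B$ is regular elliptic iff $f<0$ and loxodromic iff $f>0$. This yields a function $F(w,x,y,z)$ on the region $-1\le w\le x\le y\le z\le 3$, with $y,z$ ranging over the admissible discrete values. The constraints $p_1\le p_2\le p_3\le p_4$ enter only as this ordering.

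For part (a) I expect $F$ to be monotone in $y$ and in $z$ on the relevant range. I would verify this by computing $\partial F/\partial y$ and $\partial F/\partial z$, or more cleanly by factoring $F$ after substitution. With monotonicity in hand, the worst case is the boundary $y=z=3$, i.e.\ $p_3=p_4=\infty$. There $F$ reduces to a function of $w+x$ (and possibly $wx$), and I would show it is negative exactly when $w+x>50/9$. The threshold $50/9$ should appear as a root of this boundary polynomial, and one then checks the sign at the finitely many admissible pairs $(p_1,p_2)$ near the boundary.

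For part (b) I would show $F>0$ on the stated region $\max\{2,x,(9(w+x)-8)/14\}\le y\le z$. Since $z\ge y$, the plan is to show $F$ is increasing in $z$ and then prove $F(w,x,y,y)>0$ whenever $y\ge (9(w+x)-8)/14$ and $w+x\le 50/9$. This is a polynomial inequality in $s=w+x$ (and possibly $wx$) and $y$, and should follow by writing $F$ as a sum of manifestly positive terms using the linear lower bound on $y$. Taking $p_3',p_4'$ large then gives the first statement of (b), since $y,z\to3$ and $(9s-8)/14\le 3$ whenever $s\le 50/9$.

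The main obstacle will be this sign analysis of $F$: a degree-four expression in $\tau$ becomes a fairly high-degree polynomial in four variables. I will try first to factor $F$ at $z=y$ and at $y=3$, isolating a factor whose sign is governed by the linear quantity $14y-9(w+x)+8$. Failing that, I would bound the non-dominant terms crudely using $2\le y\le 3$.
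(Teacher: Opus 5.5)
Your plan for part (a) rests on a step that fails: $F$ is not monotone in $y$ and $z$ on $x\le y\le z\le 3$, so $y=z=3$ is not the worst case. Take $p_1=p_2=20$, so $w=x=1+2\cos(\pi/10)\approx 2.902$ and $w+x>50/9$. Since $F(w,x,x,x)=(3-w)(x-w)^2\bigl((3-w)x^2-4\bigr)$, we get $F(w,w,w,w)=0$. On the other hand, $F(w,w,3,3)=-(w+1)(3-w)^2(4w^2-5w-17)\approx-0.08$. So along the diagonal $F$ decreases toward $(3,3)$, and the maximum is at $(x,x)$. At other points, e.g.\ $(w,x)=(2.6,2.96)$, $F$ instead increases toward $(3,3)$. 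The paper works in $u=(w+x)/2$, $v=\sqrt{wx}$ and splits the region:
for $y\le 2v-8/3$ it shows $F(w,x,y,z)\le F(w,x,y,y)$, with $F(w,x,y,y)$ decreasing in $y$;
for $y\ge 2v-8/3$ it uses convexity in $z$, along $y=z$, and in $y$ at $z=3$.
This yields
\[
F(w,x,y,z)\le \max\bigl\{F(w,x,x,x),\,F(w,x,x,3),\,F(w,x,3,3)\bigr\},
\]
and all three corners must then be shown non-positive.

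Second, even at the corner $y=z=3$, the sign is not governed by $w+x>50/9$, and the continuous inequality you propose there is false. Exactly, $F(25/9,25/9,3,3)=272/59049>0$. On the diagonal, $F(w,w,3,3)\le 0$ only for $w\ge(5+3\sqrt{33})/8$, i.e.\ $w+x\ge(5+3\sqrt{33})/4\approx 5.5584$, which exceeds $50/9\approx 5.5556$. The constant $50/9$ is the threshold for the corners $(x,x)$ and $(x,3)$, which vanish at $(w,x)=(23/9,3)$. So no argument in continuous variables can prove (a) with the bound $50/9$. The paper closes this window with an arithmetic lemma: no $3\le p_1\le p_2$ gives $50/9\le w+x\le(5+3\sqrt{33})/4$. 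For instance, with $p_1=10$ the sum jumps from $5.5552\ldots$ at $p_2=25$ to $5.5599\ldots$ at $p_2=26$. Your remark about checking finitely many admissible pairs points the right way, but you attach it to the wrong inequality.

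Part (b) is broadly in line with the paper. The clean mechanism is $\partial F/\partial y=2f_1(w+x+z-6)+12f_2^2$, and symmetrically in $z$. So when $w+x+y'\ge 6$ it suffices to check $f_1\ge 0$ and $F\ge 0$ at the single point $y=z=y'$. You do not need to prove $F(w,x,y,y)>0$ for all $y$, and that strict inequality in fact fails at $(2,2,2,2)$, i.e.\ $\Gamma(6,6,6;6)$, where $W_B$ is parabolic. For $w<2$, use $f_2\ge 0$ directly.
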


Moreover, in case (b) and when $p_4=\infty$ we can estimate $p_3$ and so decide
whether the $(p_1,p_2,p_3)$-triangle group $\Gamma(p_1,p_2,p_3)$ has Type-$A$ or Type-$B$.

\begin{thm}\label{main-thm-3}
Let $p_1$, $p_2$ and $p_3$ be integers satisfying $3\le p_1\le p_2\le p_3\le \infty$.
Define
$$
w = 1 + 2\cos(2\pi/p_1), \quad  x = 1 + 2\cos(2\pi/p_2), \quad y = 1 + 2\cos(2\pi/p_3).
$$
Let $\Gamma(p_1,p_2,p_3)\subset{\rm SU}(2,1)$ be the corresponding triangle group.
Then we have 
\begin{enumerate}
\item[(a)] For $p_1$, $p_2$ and $p_3$ so that $w+x\ge 50/9$ then $\Gamma(p_1,p_2,p_3)$ has Type-$B$.
\item[(b)] For $p_1$, $p_2$ and $p_3$ so that $w+x\le 50/9$ and 
$$
\max\left\{1,\,x,\,\frac{3(w+x)}{2}-\frac{16}{3}\right\}\le y,
$$
the group $\Gamma(p_1,p_2,p_3)$ has Type-$A$.
\end{enumerate}
\end{thm}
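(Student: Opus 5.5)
Theorem~\ref{main-thm-3} follows from Theorem~\ref{main-thm-2} specialised to $p_4=\infty$, i.e.\ $z=3$. The parameter space of $(p_1,p_2,p_3)$-triangle groups is parametrised by ${\rm tr}(W_A)={\rm tr}(R_1R_3R_2R_3)$; see \cite{kpt}. Start at the Lagrangian (discrete) parameter value and move along this one-parameter family. $W_A$ first becomes non-loxodromic exactly at the value where it is parabolic with trace $3$, and that group is $\Gamma(p_1,p_2,p_3;\infty)$. So the type is decided by the nature of $W_B$ in $\Gamma(p_1,p_2,p_3;\infty)$: if $W_B$ is loxodromic there, the group has Type-$A$. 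If $W_B$ is elliptic there, then by continuity $W_B$ passed through a parabolic value at an earlier parameter where $W_A$ was still loxodromic, giving Type-$B$. For this I must check that $W_B$ is loxodromic near the Lagrangian parameter. This holds because there $W_B$ is a product of three real reflections of a hyperbolic triangle group, with $1/p_1+1/p_2+1/p_3<1$, hence hyperbolic.

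Part (a) is then immediate from Theorem~\ref{main-thm-2}(a). If $w+x\ge 50/9$, then $W_B$ is elliptic in $\Gamma(p_1,p_2,p_3;p_4)$ for all $p_3,p_4$, in particular for $p_4=\infty$. By the continuity argument above, $W_B$ is parabolic at some intermediate parameter where $W_A$ is loxodromic, so the group has Type-$B$.

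For part (b), Theorem~\ref{main-thm-2}(b) gives loxodromicity of $W_B$ when $\max\{2,x,(9(w+x)-8)/14\}\le y\le z$. For $z=3$ this condition is stronger than the one in Theorem~\ref{main-thm-3}(b), so I cannot just quote it. Instead I will redo the trace computation at $z=3$.

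\begin{itemize}
\item Write ${\rm tr}(W_B)$ explicitly in terms of $w,x,y$ and $z=3$, presumably using the same Gram-matrix normal form used in proving Theorem~\ref{main-thm-2}.
\item Apply Goldman's discriminant $f(\tau)=|\tau|^4-8{\rm Re}(\tau^3)+18|\tau|^2-27$.
\item Show $f({\rm tr}\,W_B)>0$ whenever $y\ge\max\{1,x,\tfrac32(w+x)-\tfrac{16}{3}\}$ and $w+x\le 50/9$.
\end{itemize}

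The main obstacle is this inequality. I would treat $f$ as a function of $y$ for fixed $w,x$ and show it is increasing in $y$ on the relevant range. Then it suffices to check positivity on the boundary $y=\max\{1,x,\tfrac32(w+x)-\tfrac{16}3\}$, which is a two-variable polynomial inequality in $(w,x)$ on the region $1\le w\le x\le 3$, $w+x\le 50/9$. The threshold $50/9$ should be exactly where the boundary case degenerates ($f=0$ at $w+x=50/9$, $y=3$; note $\tfrac32\cdot\tfrac{50}{9}-\tfrac{16}3=3$). That suggests factoring the boundary polynomial with a factor $(50/9-(w+x))$ and bounding the remaining factor below by elementary estimates, splitting into the cases where $x$ or the linear bound achieves the maximum.
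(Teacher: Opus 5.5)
Part (a) is fine and is how the paper gets it (Corollary~\ref{cor-main-large}, i.e.\ Proposition~\ref{prop-main-large} at $z=3$). The paper simply takes the type of $W_B$ at $p_4=\infty$ as deciding Type-$A$/Type-$B$, so your continuity argument is extra. Strictly, at the first zero of $F$ you only know that $W_B$ is parabolic or has a repeated eigenvalue, not that it is parabolic.

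Your set-up for (b) is also right. Theorem~\ref{main-thm-2}(b) is too weak at $z=3$, and one should start at $y'=\max\{1,x,\tfrac32(w+x)-\tfrac{16}{3}\}$, increase $y$, and split according to which term of the maximum is active. The gap is that the step carrying all the weight, that $F$ is increasing in $y$, is asserted without a mechanism. Checking $F>0$ on the boundary does not provide one.

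What makes it work is that $F=f_1^2+4f_2^3$ with $f_1,f_2$ affine in $y$, $\partial_y f_2=1$ and $\partial_y f_1=w+x+z-6=w+x-3$. So
\[
\frac{\partial F}{\partial y}(w,x,y,3)=2(w+x-3)\,f_1+12f_2^2 .
\]
Hence $F$ is non-decreasing for $y\ge y'$ provided $w+x\ge 3$ and $f_1(w,x,y',3)\ge 0$; then $f_1$ stays non-negative as $y$ grows. You must therefore verify \emph{two} inequalities at $y'$, and you only mention one.

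The paper does exactly this for $w\ge2$. That is Lemma~\ref{loxo condition}, with $f_1\ge0$ checked on six boundary arcs in Lemma~\ref{lem-f1-loxo-cond}, and $F\ge0$ in Lemmas~\ref{lem-13} and~\ref{lem-14}. For $w\le 2$ it needs no monotonicity at all: $f_2(w,x,y,3)=y-(w-1)(x-1)\ge 0$ whenever $y\ge\max\{1,x\}$, so $F\ge0$ at once. That is where the $1$ in the maximum comes from; it only matters for $p_1=p_2=3$. It also covers $p_1=3$, i.e.\ $w=0$, which your region $1\le w\le x\le 3$ leaves out.

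Your heuristic for the two-variable inequality would also mislead you. There is no factor $\tfrac{50}{9}-(w+x)$. On that line $y'=3$, and $F(w,\tfrac{50}{9}-w,3,3)$ is positive at $w=\tfrac{25}{9}$ and vanishes only at $w=\tfrac{23}{9}$.

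The only zeros of $F(w,x,y',3)$ are the isolated points $(w,x,y')=(\tfrac{23}{9},3,3)$ and $(\tfrac83,\tfrac83,\tfrac83)$, and the latter is not on the line $w+x=\tfrac{50}{9}$. So the strict inequality you set out to prove is false for continuous $w,x$. The argument can only give $F\ge 0$. To conclude that $W_B$ is loxodromic you also need the remark that $\tfrac{23}{9}$ and $\tfrac83$ are not of the form $1+2\cos(2\pi/p)$.
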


We remark that when $w+x\le 50/9$ we can see from Theorem~\ref{main-thm-2}~(b) that there
are only finitely many $\Gamma(p_1,p_2,p_3;p_4)$ where $R_1R_2R_3$ is elliptic. We now 
indicate how to find such groups. When $p_1\le 9$ it is easy to list them directly.

\begin{thm}\label{prop-main-finite}
Let $p_1,\,p_2,\,p_3,\,p_4$ be as in Theorem~\ref{main-thm-2}. If $p_1\le 9$ the only groups
$\Gamma(p_1,p_2,p_3;p_4)$ in which $R_1R_2R_3$ is elliptic or parabolic are
\begin{enumerate}
    \item[(a)] $\Gamma(3,3,4;7)$, $\Gamma(3,3,5;5)$, $\Gamma(4,4,4;5)$; 
    \item[(b)] $\Gamma(p,p,p;p)$ for $p\in\{5,6,7,8,9\}$;
    \item[(c)] $\Gamma(8,8,8;9)$, $\Gamma(8,9,9;9)$; 
    \item[(d)] $\Gamma(9,p_2,p_2+1;p_2+1)$ for $9\le p_2\le 16$;
    \item[(e)] $\Gamma(9,p_2,p_2;p_4)$ with $p_2\le p_4\le p_2+\delta(p_2)$ where
     $$
    \delta(p_2) = \begin{cases}
    3, & p_2 \in \{9, 10, 11, 12\}, \\
    2, & p_2 \in \{13, 14, 15, 16, 17\}, \\
    1, & p_2 \in \{18, 19, 20\}, \\
    0, & p_2 \in \{21, 22\}.
    \end{cases}
    $$
\end{enumerate}
\end{thm}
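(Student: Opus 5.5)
The plan is to write ${\rm tr}(R_1R_2R_3)$ explicitly in terms of $w,x,y,z$ and then apply the Goldman discriminant test. Take $\Gamma(p_1,p_2,p_3;p_4)$ in the normal form of \cite{kpt}: the Gram matrix of the polar vectors of $C_1,C_2,C_3$ has diagonal entries $2$ and off-diagonal moduli determined by $w,x,y$. The remaining cyclic phase is pinned down by ${\rm tr}(R_1R_3R_2R_3)=z$. With this normalisation, ${\rm tr}(R_1R_2R_3)=\tau(w,x,y,z)$ is an explicit complex number: its real part is polynomial in the square roots of $w+1,x+1,y+1$, and its imaginary part involves $\sqrt{\Delta}$, where $\Delta$ is the Gram determinant and depends on $z$. $R_1R_2R_3$ is elliptic or parabolic exactly when $f(\tau)=|\tau|^4-8\,{\rm Re}(\tau^3)+18|\tau|^2-27\le 0$, since $f$ is Goldman's discriminant function. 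So the task becomes a finite search for integer quadruples with $f(\tau(w,x,y,z))\le 0$. Boundary cases (parabolic, $f=0$) must be checked in exact arithmetic using cyclotomic values.

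To make the search finite I will use Theorem~\ref{main-thm-2}(b). For $p_1\le 9$ and $p_2\ge p_1$ I need to know when $w+x\le 50/9$. Since $1+2\cos(2\pi/9)\approx 2.532$, we get $w+x\le 50/9\approx 5.556$ for all $p_2$ once $p_1\le 8$. For $p_1=9$, the inequality $x\le 50/9-w\approx 3.024$ holds for every $p_2$ as well, because $x<3$. So part (a) of Theorem~\ref{main-thm-2} never applies, and part (b) gives loxodromicity whenever
$$\max\left\{2,\,x,\,\tfrac{9(w+x)-8}{14}\right\}\le y\le z .$$
Because $x\ge 2$ once $p_2\ge 6$, only $y<\max\{x,(9(w+x)-8)/14\}$ needs examining. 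When $p_1\le 8$, the bound $(9(w+x)-8)/14$ is at most about $(9\cdot 5.41-8)/14<x$ for large $p_2$. This should force $y=x$ for large $p_2$, or an explicit small window otherwise. For $p_1=9$ the bound exceeds $x$ only slightly, which leaves $y\in[x,\,(9(w+x)-8)/14)$, a window of one or two values of $p_3$ near $p_2$. That is consistent with items (d) and (e).

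The main obstacle is making the search finite in $p_2$ and $p_4$. Within the window $y$ is not yet forced large, so I need a direct monotonicity argument. First I will show that $f(\tau)$ is increasing in $z$ for fixed $w,x,y$; one route is to show $\partial f/\partial z>0$ by writing $\tau$ with ${\rm Im}\,\tau$ monotone in $z$. This gives a cutoff $p_4\le p_2+\delta(p_2)$. Next I will show that $f$ at $y=x$, $z=x$ becomes positive once $p_2$ is large. For $p_1=9$ this should occur at $p_2=23$, giving the cutoff $22$ in (e). Each surviving finite range is then handled by computer or exact evaluation, which yields the lists (a)--(e). The pattern $\Gamma(p,p,p;p)$ for $5\le p\le 9$ and its failure at $p_1=3,4$ needs individual checks of small cases, in particular confirming the exceptional $\Gamma(3,3,4;7)$ and $\Gamma(3,3,5;5)$.
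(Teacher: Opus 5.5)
\textbf{There is a gap in your plan.} Reducing to finitely many triples $(p_1,p_2,p_3)$ via Theorem~\ref{main-thm-2}(b) is a legitimate starting point. However, the step meant to make the search finite in $p_4$ is false: $F$ is not increasing in $z$ for fixed $w,x,y$, even inside the windows you need. Lemma~\ref{lem-f1-f2} gives $f_1(w,w,w,z)=3(w-2)(z-w)-2(w-2)^3$ and $f_2(w,w,w,z)=(z-w)-(w-2)^2$. Hence
\[
F(w,w,w,z)=(z-w)^2\bigl(4(z-w)-3(w-2)^2\bigr).
\]
For $w\neq 2$ this decreases on $w<z<w+\tfrac12(w-2)^2$ and is negative on $w<z<w+\tfrac34(w-2)^2$. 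This dip is exactly what produces the elliptic groups $\Gamma(8,8,8;9)$ and $\Gamma(9,p_2,p_2;p_4)$ with $p_4>p_2$ in the statement. Indeed $F=0$ for $\Gamma(8,8,8;8)$ but $F<0$ for $\Gamma(8,8,8;9)$, so no argument yielding $\partial F/\partial z>0$ can exist, via ${\rm Im}\,\tau$ or otherwise.

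What is true is this. Both $f_1$ and $f_2$ are affine in $y,z$, with $\partial f_2/\partial z=1$ and $\partial f_1/\partial z=w+x+y-6$. So $\partial F/\partial z=2f_1(w+x+y-6)+12f_2^2\ge 0$ once $f_1\ge 0$ and $w+x+y\ge 6$. The missing idea is to locate, for each surviving triple, a point where both $f_1\ge 0$ and $F\ge 0$, and propagate from there (Lemma~\ref{loxo condition}). The finitely many smaller values of $p_4$ are then checked directly.

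Your $p_2$-cutoff at $p_1=9$ needs the same ingredient. You have $F(w,x,x,x)=(3-w)(x-w)^2\bigl((3-w)x^2-4\bigr)>0$ for $p_2\ge 23$. This only propagates to all $y\ge x$, $z\ge y$ because $f_1(w,x,x,x)\ge 0$ there as well, which holds for $p_2\ge 18$. Theorem~\ref{main-thm-2}(b) alone still leaves a nonempty window, containing $p_3=p_2$, for every $p_2\le 30$.

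\textbf{Second omission: the signature condition.} You never impose that the Hermitian form has signature $(2,1)$, i.e.\ $f_2>-1$ (Lemma~\ref{lem-H-sig}). In your set-up this is the condition that the Gram determinant, which equals $-(f_2+1)$, be negative; it is not what sits under a square root in ${\rm Im}\,\tau$. A search for $F\le 0$ without this condition returns configurations that are not groups in ${\rm SU}(2,1)$:
\begin{enumerate}
\item[(i)] $F(0,0,0,z)=4z^2(z-3)\le 0$ for every $z$, so every $(3,3,3;p_4)$ would be listed and there would be no cutoff in $p_4$ at all;
\item[(ii)] $F(0,0,1,z)=4z^3-11z^2+8z-8<0$ for $p_4\in\{4,5,6\}$;
\item[(iii)] $F=0$ at $(4,4,4;4)$.
\end{enumerate}
These are excluded only because $f_2\le -1$ there.

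For $p_1\le 7$ the paper avoids the monotonicity issue altogether. It plays $f_2>0$ (which forces $F>0$) against $f_2>-1$ and checks a few leftover cases by hand. It uses the $f_1\ge 0$ propagation argument only for $p_1\in\{8,9\}$.
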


For $10\le p_1\le p_2$ and $w+x\le 50/9$ it is less straightforward to list the groups
$\Gamma(p_1,p_,p_3;p_4)$ where $R_1R_2R_3$ is elliptic. 
However, this can be deduced from Table~\ref{tab:list} where, 
for such $p_1$ and $p_2$, we give the optimal values of 
$p'_3$ and $p'_4$ with $p_2\le p'_3\le p'_4$ so that $R_1R_2R_3$ is loxodromic 
in $\Gamma(p_1,p_2,p_2;p_4)$ for all $p_3\ge p'_3$ and
$p_4\ge p'_4$. (Compare this with Table~2 in Wang \cite{wan}.) The element $R_1R_2R_3$
is elliptic when the group is not on this list. 

In these results we do not address the issue of discreteness. However, there is an extensive
literature on which $\Gamma(p_1,p_2,p_3;p_4)$ are discrete. For example, Thompson \cite{tho} and Deraux \cite{der} show that the groups in 
Theorem~\ref{prop-main-finite}~(a) are lattices. Moreover, the case where
$p_1=p_2=p_3$ was considered by Parker in \cite{par2}. In particular, in Section~4.3 of 
\cite{par2} it is shown that the groups in Theorem~\ref{prop-main-finite}~(b) 
where $p_4=p_1$ are normal subgroups of Livn\'e's lattices \cite{liv}, see also \cite{par1}. 
For groups where $R_1R_2R_3$ 
is loxodromic, Schwartz \cite{sch-rhochi} showed that $\Gamma(4,4,4;7)$ is discrete. 
In our forthcoming paper \cite{lp}, we will show that if $\Gamma(p_1,p_2,p_3;p_4)$ is discrete and both $W_A$ and $W_B$ are elliptic, then it is one of the groups listed in parts~(a) and~(b) of Theorem~\ref{prop-main-finite}.

More is known when $p_4=\infty$. For example, 
$\Gamma(3,3,p_3;\infty)$ is discrete and in ${\rm SU}(2,1)$ for $4\le p_3\le \infty$; see
Deraux-Falbel \cite{der-fal} for $p=4$, Parker, Wang, and Xie \cite{pwx} for 
$4<p_3<\infty$ (see also Acosta \cite{aco}), and Parker and Will 
\cite{pw} for $p_3=\infty$. In \cite{kpt2} and \cite{kam} groups $\Gamma(n,n,\infty;k)$ were 
considered. Changing generators (see Lemma~1.2 of \cite{kpt}), these groups are isomorphic 
to $\Gamma(n,n,k;\infty)$. In \cite{kpt2} and \cite{kam}, it is shown that there is a range of 
values of $k$ where $R_1R_2R_3$ is elliptic, but that it is loxodromic for smaller values of 
$k$. Since the lower bound of this interval is smaller than $n$ this does not contradict our
results. If we were to use the algorithm from Theorem~\ref{main-thm-1} to convert such 
groups to one with a good set of generators, then we would lose the property $p_1=p_2$. 

The classification of complex hyperbolic $(p_1,p_2,p_3)$-triangle groups 
$\Gamma(p_1,p_2,p_3)$ into Type-$A$ and Type-$B$ was solved by Wang in \cite{wan}, 
who showed that whether a triangle group is of Type-$A$ or Type-$B$ is characterised by the 
sign of a polynomial in the cosines of the angles of the triangle; see Theorem~1.1 of \cite{wan}.

To obtain an explicit list of Type-$A$ groups, Wang established a corollary stating that, 
for two complex hyperbolic $(p_1,p_2,p_3)$- and $(p_1',p_2',p_3')$-triangle groups, 
if the former is of Type-$A$ and
$$
p_1'\le p_1,\qquad p_2'\le p_2,\qquad p_3'\ge p_3,
$$
then the latter is also of Type-$A$. By this result, for given $p_1$ and $p_2$, 
once the smallest $p_3'$ for which the polynomial is positive has been found, every 
complex hyperbolic $(p_1,p_2,p_3)$-triangle group with $p_3\ge p_3'$ is of Type-$A$. 
In this way, Wang obtained a list of Type-$A$ groups; see Table~2 of \cite{wan}.

However, \cite{wan} does not appear to provide a general result showing whether, 
for given $p_1$ and $p_2$, such a number $p_3'$ exists. For example, Table~2 of 
\cite{wan} shows that, when $p_1=10$ and $10\le p_2\le 25$, such a number $p_3'$ 
exists. However, no evidence is given as to whether such a number $p_3'$ exists when 
$p_2>25$. As a corollary of statement~(a) of Theorem~\ref{main-thm-3}, we confirm 
that Table~2 of \cite{wan} indeed lists all Type-$A$ groups, since if $R_1R_2R_3$ 
is elliptic in the group $\Gamma(p_1,p_2,p_3;\infty)$, then the corresponding 
triangle group must be of Type-$B$.

The paper is arranged as follows. In Section~\ref{background}, we present the necessary background on complex hyperbolic triangle groups. In Section~\ref{sec-good-triangle} 
we give an algorithm that either produces a good set of generators for 
$\Gamma(p_1,p_2,p_3;p_4)$ or else says that it is not discrete. This proves 
Theorem~\ref{main-thm-1}. In Section~\ref{sec-large-p1-p2} we prove
Theorem~\ref{main-thm-2}~(a) and Theorem~\ref{main-thm-3}~(a). Finally, in
Section~\ref{sec-small-p1-p2} we prove Theorem~\ref{main-thm-2}~(b) and 
Theorem~\ref{main-thm-3}~(b) as well as Theorem~\ref{prop-main-finite}. 

\medskip

\noindent{\bf Acknowledgments.}
This paper was written while the first author was visiting Durham University. We would like to thank the China Scholarship Council for their financial support (Grant No. 202506130039).

\medskip

\section{Background}\label{background}
In this section, we provide some basic background on complex hyperbolic geometry and refer the reader to \cite{gol}.

\subsection{Complex hyperbolic space}
Let $\mathbb C^{2,1}$ denote the $3$-dimensional complex vector space equipped with a  Hermitian form of signature $(2,1)$ defined by
$$
\langle \mathbf z, \mathbf w \rangle
={\bf w}^*H{\bf z}
$$
where ${\bf w}^*$ denotes the Hermitian transpose of ${\bf w}$ (that is transpose ${\bf w}$ 
and complex conjugate its entries). 
Then, the set $\mathbb C^{2,1}\setminus\{0\}$ can be partitioned into the following three subsets:
\begin{eqnarray*}
    V_-&=&\left\{\mathbf z\in \mathbb C^{2, 1}\setminus \left\{0\right\}: \langle \mathbf{z}, \mathbf{z} \rangle<0\right\},\\
    V_0&=&\left\{\mathbf z\in \mathbb C^{2, 1}\setminus \left\{0\right\}: \langle \mathbf{z}, \mathbf{z} \rangle=0\right\},\\
    V_+&=&\left\{\mathbf z\in \mathbb C^{2, 1}\setminus \left\{0\right\}: \langle \mathbf{z}, \mathbf{z} \rangle>0\right\}.
\end{eqnarray*}
Complex hyperbolic space and its boundary are defined by
$$
\mathbf H_{\mathbb C}^2=\mathbb P(V_-)
\qquad\text{and}\qquad
\partial\mathbf H_{\mathbb C}^2=\mathbb P(V_0),
$$
respectively, where $\mathbb P$ is the \emph{standard projection map} 
${\mathbb P}:{\mathbb C}^{2,1}\longmapsto {\mathbb C}{\mathbb P}^2$ given in the chart where $z_3\neq 0$ by
$$
{\mathbb P} : \left(\begin{matrix}
    z_1\\
    z_2\\
    z_3
\end{matrix}\right)\longmapsto
\left(\begin{matrix}
    z_1/z_3 \\
    z_2/z_3
\end{matrix}\right).
$$
Complex hyperbolic space $\mathbf H_{\mathbb C}^2$ is equipped with the \emph{Bergman metric}, which is defined by the distance function $\rho(z,w)$ satisfying
$$
\cosh^2\left(\frac{\rho(z,w)}{2}\right)
=
\frac{\langle \mathbf z, \mathbf w\rangle\langle \mathbf w, \mathbf z \rangle}
{\langle \mathbf z, \mathbf z \rangle\langle \mathbf w, \mathbf w \rangle}.
$$

\medskip

\subsection{Complex Geodesics}

A \emph{complex geodesic} $C$ in ${\bf H}^2_{\mathbb C}$ is the standard projection of a complex $2$-dimensional subspace $V$ of $\mathbb C^{2,1}$, which is determined by its \emph{polar vector} $\mathbf n\in V_+$ by
$$
C={\mathbb P}\bigl\{{\bf z}\in V_-\, :\, \langle {\bf z},{\bf n}\rangle =0\bigr\}.
$$

For ${\bf n}_1,\,{\bf n}_2\in V_+$, we denote
$$
N(\mathbf n_1,\mathbf n_2)=\frac{\langle \mathbf n_1,\mathbf n_2\rangle\langle \mathbf n_2,\mathbf n_1\rangle}{\langle \mathbf n_1,\mathbf n_1\rangle\langle \mathbf n_2,\mathbf n_2\rangle}.
$$
There are three possible relative positions of two distinct complex geodesics $C_1$ and $C_2$, with polar vectors $\mathbf n_1\in V_+$ and $\mathbf n_2\in V_+$, respectively (see
Section 3.3.2 of Goldman \cite{gol}:
\begin{itemize}
    \item \emph{Intersecting}, if $C_1$ and $C_2$ intersect in $\mathbf H_{\mathbb C}^2$, characterized by
    $$
    N(\mathbf n_1,\mathbf n_2)<1,
    $$
    in which case the complex angle $\theta$ between them is given by
    $$
    \cos^2(\theta)=N(\mathbf n_1,\mathbf n_2).
    $$
    \item \emph{Asymptotic}, if $C_1$ and $C_2$ intersect on $\partial\mathbf H_{\mathbb C}^2$, characterized by
    $$
    N(\mathbf n_1,\mathbf n_2)=1.
    $$
    In this case, we say the angle between $C_1$ and $C_2$ is zero.
    \item \emph{Ultra-parallel}, if $C_1$ and $C_2$ do not intersect in $\mathbf H_{\mathbb C}^2\cup\partial\mathbf H_{\mathbb C}^2$, characterized by
    $$
    N(\mathbf n_1,\mathbf n_2)>1,
    $$
    in which case the distance between them is given by
    $$
    \cosh^2\left(\frac{\rho(C_1,C_2)}{2}\right)=N(\mathbf n_1,\mathbf n_2).
    $$
\end{itemize}

\medskip

\subsection{Classifying holomorphic isometries}

The holomorphic isometry group of $\mathbf H_{\mathbb C}^2$ is
$$
\mathrm{PU}(2,1)=\mathrm{SU}(2,1)/\{1,\omega,\omega^2\},
$$
where $\omega$ is a primitive cube root of unity. An isometry $A\in\mathrm{SU}(2,1)$ is classified into one of the following three types according to its fixed points:
\begin{itemize}
    \item \emph{Parabolic}, if $A$ has exactly one fixed point on $\partial\mathbf H_{\mathbb C}^2$.
    \item \emph{Loxodromic}, if $A$ has exactly two distinct fixed points on $\partial\mathbf H_{\mathbb C}^2$.
    \item \emph{Elliptic}, if $A$ has at least one fixed point in $\mathbf H_{\mathbb C}^2$.
\end{itemize}
The type of a given isometry can be determined by the following result of Goldman.

\begin{thm}[Theorem 6.2.4 of \cite{gol}]\label{Goldman}
    Let $F:{\mathbb C}\longrightarrow {\mathbb R}$ be given by
    \begin{eqnarray*}
       F(\tau) & = & |\tau|^4-8\mathrm{Re}(\tau^3)+18|\tau|^2-27 \\
       & = & f_1(\tau)^2+4f_2(\tau)^3,
    \end{eqnarray*}
    where $f_1(\tau)=|\tau|^2+12{\rm Re}(\tau)+9$ and $f_2(\tau)=-2{\rm Re}(\tau)-3$.
    
    Suppose that $A\in \mathrm{SU}(2, 1)$, then
    \begin{enumerate}
        \item[(a)] $A$ is loxodromic if and only if $F(\mathrm{tr}(A))>0$;
        
        \item[(b)] $A$ is elliptic with distinct eigenvalues if and only if $F(\mathrm{tr}(A))<0$;

        \item[(c)] $A$ is elliptic with repeated eigenvalues (or the identity) if and only if $F(\mathrm{tr}(A))=0$ and $A$ is diagonalisable.

        \item[(d)] $A$ is parabolic if and only if $F(\mathrm{tr}(A))=0$ and $A$ is not diagonalisable.
        
    \end{enumerate}
\end{thm}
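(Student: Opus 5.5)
The plan is to identify $F(\mathrm{tr}(A))$, up to a sign, with the discriminant of the characteristic polynomial of $A$. Then we read off the dynamical type from the configuration of the eigenvalues.

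First, the characteristic polynomial. Since $A$ preserves $H$, we have $A^{-1}=H^{-1}A^*H$, so $A^{-1}$ and $A^*$ are conjugate. Hence $\mathrm{tr}(A^{-1})=\overline{\mathrm{tr}(A)}$. With $\det A=1$ this gives
$$
\chi_A(\lambda)=\lambda^3-\tau\lambda^2+\overline{\tau}\lambda-1,\qquad \tau=\mathrm{tr}(A).
$$
A direct expansion of the cubic discriminant $\Delta=\prod_{i<j}(\lambda_i-\lambda_j)^2$ in terms of the coefficients should give $\Delta=-F(\tau)$. As sanity checks, $\tau=3$ gives $F=0$, and the identity $F=f_1^2+4f_2^3$ is a routine expansion. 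This is bookkeeping and I will not grind through it.

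Second, the eigenvalue structure. The eigenvalue set is invariant under $\lambda\mapsto 1/\overline{\lambda}$. If $A\mathbf v=\lambda\mathbf v$ with $|\lambda|\ne1$, then $\langle\mathbf v,\mathbf v\rangle=|\lambda|^2\langle\mathbf v,\mathbf v\rangle$, so $\mathbf v$ is null. Therefore either all eigenvalues have modulus $1$, or they are $re^{i\theta}$, $r^{-1}e^{i\theta}$, $e^{-2i\theta}$ with $r>1$.

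Third, the sign of $\Delta$. Since $\lambda_1\lambda_2\lambda_3=1$,
$$
\Delta=\prod_{i<j}\frac{(\lambda_i-\lambda_j)^2}{\lambda_i\lambda_j}.
$$
In the case of three distinct unit eigenvalues, each factor equals $\lambda_i/\lambda_j+\lambda_j/\lambda_i-2=2\cos(\arg(\lambda_i/\lambda_j))-2<0$. So $\Delta<0$, that is, $F>0$ would be wrong and we must get $F<0$; the sign convention will be fixed by the explicit identity $\Delta=-F$. In the case $|\lambda_1|=r\neq 1$:
\begin{itemize}
\item the factor for the pair $(\lambda_1,\lambda_2)$ is $r+r^{-1}-2>0$;
\item the factors for $(\lambda_1,\lambda_3)$ and $(\lambda_2,\lambda_3)$ are complex conjugates of each other, because $\lambda_2=1/\overline{\lambda_1}$ and $\lambda_3$ is unimodular, so their product is positive.
\end{itemize}
Hence the sign of $F$ is opposite in the two cases. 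Moreover, in the second case the three eigenvalues are always distinct, so $F\ne0$ there.

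Finally, I connect eigenvalues to fixed points.
\begin{itemize}
\item \emph{Non-unit eigenvalue.} The two null eigenvectors span a plane containing no negative vector, so $A$ has exactly two boundary fixed points and no interior fixed point. Thus $A$ is loxodromic, which gives (a).
\item \emph{Distinct unit eigenvalues.} The eigenvectors are pairwise orthogonal, since $\langle A\mathbf u,A\mathbf v\rangle=\lambda\overline{\mu}\langle\mathbf u,\mathbf v\rangle$ with $\lambda\overline{\mu}\ne1$. They are also non-null, since the form is nondegenerate. Signature $(2,1)$ then forces one of them to be negative, so $A$ is elliptic, which gives (b).
\item \emph{$F=0$ and $A$ diagonalisable.} The eigenspaces are orthogonal and nondegenerate, and one of them contains a negative vector, so $A$ is elliptic with repeated eigenvalues.
\end{itemize}

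The main obstacle is the remaining case, $F=0$ with $A$ not diagonalisable, where I must show that $A$ is parabolic. All eigenvalues are unimodular, since a non-unit eigenvalue would force distinct eigenvalues. Take $\mathbf v$ spanning the top of a nontrivial Jordan block, with $(A-\lambda)\mathbf w=\mathbf v$. From
$$
\langle A\mathbf w,A\mathbf v\rangle=\langle \mathbf w,\mathbf v\rangle
$$
and $|\lambda|=1$, I expect to derive $\langle\mathbf v,\mathbf v\rangle=0$. So the Jordan eigenvector is null and gives a boundary fixed point.

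Next I rule out a fixed point in $\mathbf H^2_{\mathbb C}$. If $A$ fixed a negative vector $\mathbf u$, then $A$ would preserve $\mathbf u^\perp$, which is positive definite. Then $A$ would lie in a compact group and so be diagonalisable, a contradiction.

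Last, I show there is exactly one boundary fixed point. Any other null eigenvector $\mathbf v'$ would have $\langle\mathbf v,\mathbf v'\rangle\ne0$, so $\mathrm{span}(\mathbf v,\mathbf v')$ would be a nondegenerate $A$-invariant plane. Its orthogonal complement is then an invariant positive line. I would do a short case check on the Jordan form (a $2$-block or a $3$-block) to show this forces diagonalisability, a contradiction. This yields (d), and the converses in (a)--(d) follow because the four cases are exhaustive and mutually exclusive.
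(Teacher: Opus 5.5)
The paper gives no proof of this statement; it quotes it from Goldman. Your strategy is the standard one: identify $F(\mathrm{tr}A)$ with the discriminant of $\chi_A$, then read off the type from the eigenvalue configuration and the signs of $\langle\mathbf v,\mathbf v\rangle$ on eigenvectors. Most steps are fine, but two are wrong as written.

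\emph{The sign of the discriminant.} With $\Delta=\prod_{i<j}(\lambda_i-\lambda_j)^2$, as you define it, the identity is $\Delta=F(\tau)$, not $\Delta=-F(\tau)$. Substituting $b=-\tau$, $c=\overline{\tau}$, $d=-1$ into $b^2c^2-4c^3-4b^3d-27d^2+18bcd$ gives exactly $|\tau|^4-8\mathrm{Re}(\tau^3)+18|\tau|^2-27$. As a check, $\tau=1$ gives $\chi_A(\lambda)=(\lambda-1)(\lambda^2+1)$ and $\Delta=(1-i)^2(1+i)^2(2i)^2=-16=F(1)$. Your sign paragraph is self-contradictory: you correctly find $\Delta<0$ for three distinct unimodular eigenvalues, but the identity you announce would then give $F>0$, the opposite of (b). There is no ``convention'' left to fix once $\Delta$ is defined as a product of squared differences. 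Since the distinction between (a) and (b) rests entirely on this sign, this is the one computation you cannot skip. With $\Delta=F$, your two sign arguments give exactly the signs required in (a) and (b). (Minor: for the pair $(\lambda_1,\lambda_2)$ the factor is $r^2+r^{-2}-2$, since $\lambda_1/\lambda_2=r^2$; the sign is unaffected.)

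\emph{No interior fixed point in the loxodromic case.} You claim the two null eigenvectors span a plane containing no negative vector; this is false. Two independent null vectors in $\mathbb{C}^{2,1}$ are never orthogonal, because signature $(2,1)$ admits no totally isotropic plane. So their span has signature $(1,1)$ and projects to the complex geodesic through the two boundary fixed points, which $A$ preserves but does not fix pointwise. The correct argument is that fixed points of $A$ in $\mathbb{CP}^2$ are eigenlines, and with three distinct eigenvalues there are exactly three. The third eigenvector satisfies $(\lambda_3\overline{\lambda_j}-1)\langle\mathbf v_3,\mathbf v_j\rangle=0$ with $|\lambda_3\overline{\lambda_j}|=|\lambda_j|\neq 1$ for $j=1,2$. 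Hence $\mathbf v_3$ spans the orthogonal complement of the $(1,1)$ plane and is positive, so the fixed points in $\overline{\mathbf H^2_{\mathbb C}}$ are exactly the two null eigenlines.

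The rest goes through. In the parabolic case, $\langle A\mathbf w,A\mathbf v\rangle=\langle\mathbf w,\mathbf v\rangle+\overline{\lambda}\langle\mathbf v,\mathbf v\rangle$ does force $\langle\mathbf v,\mathbf v\rangle=0$. The final Jordan-form case check is unnecessary: $\mathbf v$, $\mathbf v'$ and a generator of the invariant positive line $\mathrm{span}(\mathbf v,\mathbf v')^{\perp}$ are already three linearly independent eigenvectors, so $A$ would be diagonalisable.
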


Let $C$ be a complex geodesic with polar vector ${\bf n}\in V_+$. Then consider the linear map given by 
$$
R\,{\bf z}=-\mathbf z+2\frac{\langle \mathbf z,\mathbf n\rangle}{\langle \mathbf n,\mathbf n \rangle}\mathbf n 
= \left(-I+\frac{2}{\langle{\bf n},{\bf n}\rangle}\,{\bf n}{\bf n}^*H\right){\bf z}. 
$$
Since $R$ is given in terms of the Hermitian form, it corresponds to a matrix in 
${\rm SU}(2,1)$, which we also call $R$. Also, it is clear that the subspace of 
${\mathbb C}^3$ spanned by ${\bf n}$ is a $1$-eigenspace of $R$ and $R$ has a repeated
eigenvalue $(-1)$ and the $(-1)$-eigenspace projects to the complex line $C$.
Thus ${\rm tr}(R)=-1$ and $F({\rm tr}(R))=0$. We call $R$ a \emph{complex reflection} and we call $C$ the 
\emph{mirror} of $R$.

Let $C_1$ and $C_2$ be two complex geodesics with polar vectors ${\bf n}_1$ and 
${\bf n}_2$ respectively, and let $R_1$ and $R_2$ be complex reflections with mirrors $C_1$ and $C_2$. Consider the product $R_1R_2$. Then we have
$$
{\rm tr}(R_1R_2)=4 N({\bf n}_1,{\bf n}_2)-1.
$$
In particular, if $C_1$ and $C_2$ intersect with angle $\theta$ then
$$
{\rm tr}(R_1R_2)=4\cos^2(\theta)-1=1+2\cos(2\theta).
$$
Note that $R_1R_2$ has eigenvalues $1$, $e^{2i\theta}$ and $e^{-2i\theta}$ and that
its $1$-eigenspace projects to $C_1\cap C_2$. 

\subsection{Groups generated by three involutions}

We will be interested in groups generated by three complex reflections $R_1$, $R_2$,
$R_3$ with mirrors $C_1$, $C_2$, $C_3$ and polar vectors ${\bf n}_1$, ${\bf n}_2$, ${\bf n}_3$. Taking indices modulo $3$, we suppose that 
$C_{j-1}$ and $C_{j+1}$ intersect
with a complex angle $\theta_j=\pi/p_j$. That is, 
$N({\bf n}_{j-1},{\bf n}_{j+1})=\cos^2(\pi/p_j)$ and so 
$$
{\rm tr}(R_{j-1}R_{j+1})=4\cos^2(\pi/p_j)-1=1+2\cos(2\pi/p_j). 
$$
(Or in the case where $C_{j-1}$ and $C_{j+1}$ are asymptotic, then ${\rm tr}(R_{j-1}R_{j+1})=3$ and $p_j=\infty$.)
Groups of this type have been widely studied. In particular, see Kamiya, Parker and Thompson
\cite{kpt} where the following normalisation and notation are used
$$
\langle{\bf n}_j,{\bf n}_j\rangle=2,\quad 
\langle{\bf n}_2,{\bf n}_1\rangle=\rho,\quad 
\langle{\bf n}_3,{\bf n}_2\rangle=\sigma,\quad 
\langle{\bf n}_1,{\bf n}_3\rangle=\tau. 
$$
Unlike in the case of constant curvature, when $\min\{p_1,p_2,p_3\}\ge 3$ the three angles $\theta_j=\pi/p_j$ do not determine the three reflections up to ${\rm SU}(2,1)$ equivalence; there is an extra real parameter. In Section 1.5 of \cite{kpt} it is shown that we can take ${\rm tr}(R_1R_3R_2R_3)$ as this extra parameter. (When $\min\{p_1,p_2,p_3\}=2$ then the group is rigid.)
In this paper, we will restrict our attention to the case where the mirrors $C_1$ and $C_{323}=R_3(C_2)$ intersect with angle $\theta_4=\pi/p_4$. We can then relate the four traces to the parameters from Kamiya, Parker, and Thompson \cite{kpt} as follows
\begin{eqnarray*}
{\rm tr}(R_2R_3) & = & 1+2\cos(2\pi/p_1) \, =\, |\sigma|^2-1,\\
{\rm tr}(R_1R_3) & = & 1+2\cos(2\pi/p_2)\, =\, |\tau|^2-1,\\
{\rm tr}(R_1R_2) & = & 1+2\cos(2\pi/p_3)\, =\, |\rho|^2-1,\\
{\rm tr}(R_1R_3R_2R_3) & = & 1+2\cos(2\pi/p_4)\, =\, |\sigma\tau-\bar\rho|^2-1.   
\end{eqnarray*} 
For $p_1$, $p_2$, $p_3$, $p_4$ we let $\Gamma(p_1,p_2,p_3;p_4)$ 
denote the group with these parameters generated by $R_1$, $R_2$ and $R_3$. In Section~\ref{sec-good-triangle} we will show that, after possibly changing the generators for such a group, we may assume $p_1\le p_2\le p_3\le p_4$. In fact our result applies in greater generality, and we show we may assume that if at least one pair of mirrors intersect, then we may take
$$
{\rm tr}(R_2R_3)\le{\rm tr}(R_1R_3)\le{\rm tr}(R_1R_2)\le{\rm tr}(R_1R_3R_2R_3)
$$
and if any of these traces is less than $3$ it has the form above.

We want to investigate when $R_1R_2R_3$ is loxodromic, parabolic or elliptic. In order to do so, we use the Goldman discriminant function $F$ from Theorem~\ref{Goldman}. Again, using Section 1.5 of \cite{kpt} we have
$$
{\rm tr}(R_1R_2R_3)=\rho\sigma\tau-|\rho|^2-|\sigma|^2-|\tau|^2+3.
$$
We can then express the functions $f_1$ and $f_2$ from Theorem~\ref{Goldman}, and therefore $F=f_1^2+4f_2^3$ as polynomials in the four traces, see also Yuhan Wang \cite{wan}. 

\begin{lem}\label{lem-f1-f2}
Let $R_1$, $R_2$, $R_3$ be complex involutions in ${\rm SU}(2,1)$. Write
$$
\begin{array}{ll}
w = {\rm tr}(R_2R_3),\quad &
x = {\rm tr}(R_1R_3), \\
y = {\rm tr}(R_1R_2),\quad &
z = {\rm tr}(R_1R_3R_2R_3).
\end{array}
$$
Then
\begin{eqnarray*}
    f_1(w,x,y,z) & = & \bigl\vert \mathrm{tr}(R_1R_2R_3) \bigr\vert^2
+12\,\mathrm{Re}\bigl(\mathrm{tr}(R_1R_2R_3)\bigr) + 9 \\
& = & (w+x+y-6)(w+x+z-6) \\
&& \quad +6(w+x)-(w+x)^2+wx(7-w-x)-20, \\
f_2(w,x,y,z) & = & 
-2\,\mathrm{Re}\bigl(\mathrm{tr}(R_1R_2R_3)\bigr) - 3 \\
& = & -wx+w+x+y+z-4.
\end{eqnarray*}
\end{lem}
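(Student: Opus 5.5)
Writing $T=\mathrm{tr}(R_1R_2R_3)$, the plan is to express $\mathrm{Re}(T)$ and $|T|^2$ in terms of $w,x,y,z$ and then substitute into the definitions of $f_1,f_2$ from Theorem~\ref{Goldman}. We work in the normalisation of \cite{kpt} recalled above: $\langle\mathbf n_j,\mathbf n_j\rangle=2$, $\rho,\sigma,\tau$ the Hermitian products, and $T=\rho\sigma\tau-|\rho|^2-|\sigma|^2-|\tau|^2+3$. Set $P=\rho\sigma\tau$. The trace relations give
$$
|\sigma|^2=w+1,\quad |\tau|^2=x+1,\quad |\rho|^2=y+1 .
$$
Expanding $|\sigma\tau-\bar\rho|^2=|\sigma|^2|\tau|^2+|\rho|^2-2\,\mathrm{Re}(\rho\sigma\tau)$ and using $z+1=|\sigma\tau-\bar\rho|^2$, we get
$$
2\,\mathrm{Re}(P)=(w+1)(x+1)+(y+1)-(z+1)=wx+w+x+y-z+1 .
$$

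For $f_2$, write $S=|\rho|^2+|\sigma|^2+|\tau|^2-3=w+x+y$, so that $T=P-S$. Then
$$
f_2=-2\,\mathrm{Re}(P)+2S-3=-(wx+w+x+y-z+1)+2(w+x+y)-3=-wx+w+x+y+z-4,
$$
which is the stated formula. This part is immediate.

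For $f_1$ we need $|T|^2=|P|^2-2S\,\mathrm{Re}(P)+S^2$, where
$$
|P|^2=|\rho|^2|\sigma|^2|\tau|^2=(w+1)(x+1)(y+1).
$$
Hence
$$
f_1=|P|^2-2S\,\mathrm{Re}(P)+S^2+12\,\mathrm{Re}(P)-12S+9=|P|^2+(6-S)\,2\mathrm{Re}(P)+S^2-12S+9 .
$$
Substituting the expressions above gives an explicit polynomial in $w,x,y,z$. Since $f_1$ is visibly linear in $z$ (through $2\mathrm{Re}(P)$) and at most quadratic in $y$, the remaining work is to check that it agrees with
$$
(w+x+y-6)(w+x+z-6)+6(w+x)-(w+x)^2+wx(7-w-x)-20 .
$$

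The main obstacle is bookkeeping in this last comparison. I would organise it by coefficients. The $z$-coefficient on our side is $-(6-S)=w+x+y-6$, which matches the target. Then setting $z=0$, compare the coefficients of $y^2$, $y$ and $y^0$ as polynomials in $w,x$. For $y^2$, we get $-1+1=0$ on our side against $0$ in the target. For $y^1$, we get $(w+1)(x+1)+6-(w+x)-(wx+w+x+1)+2(w+x)-12$, which reduces to $w+x-6$ and matches the target's $w+x-6$. The constant term is a polynomial in $w,x$ of degree at most $3$, and I would check it by symmetric expansion in $w+x$ and $wx$. A quick numerical sanity check, for example $w=x=y=z=0$, would confirm the constant $-20$ (indeed $1+6+0+0+9\cdot0-\dots$ gives $1+6\cdot1+9-\dots$, to be verified carefully).
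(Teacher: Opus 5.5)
Your route is the paper's own: both proofs get $2\,\mathrm{Re}(\rho\sigma\tau)$ from $|\sigma\tau-\bar\rho|^2=z+1$, expand $|T|^2=|P|^2-2S\,\mathrm{Re}(P)+S^2$ with $S=w+x+y$, and substitute. Your $f_2$ computation is correct, and so are your checks of the $z$-, $y^2$- and $y$-coefficients of $f_1$.

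The one step you leave undone is the $y^0z^0$ term, and it does close. Write $s=w+x$. On your side this term is $(7-s)(wx+s+1)+s^2-12s+9$, which simplifies to $16-6s+wx(7-s)$. The target gives $(s-6)^2+6s-s^2+wx(7-s)-20$, which is the same polynomial.

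Your numerical remark should be removed or corrected. At $w=x=y=z=0$ both sides equal $16$, not $-20$. In any case, evaluating at a single point cannot replace the coefficient comparison.
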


\begin{proof}
We have
\begin{eqnarray*}
\lefteqn{
    \bigl|{\rm tr}(R_1R_2R_3)\bigr| } \\
    & = & |\rho\sigma\tau|^2-2{\rm Re}(\rho\sigma\tau)\bigl(|\rho|^2+|\sigma|^2+|\tau|^2-3\bigr) \\
    && \quad +\bigl(|\rho|^2+|\sigma|^2+|\tau|^2-3\bigr)^2 \\
    & = & (w+1)(x+1)(y+1)+(z-y-(w+1)(x+1))(w+x+y) +1 \\
    && \quad+(w+x+y)^2 \\
    & = & wx+(w+x)(y+z)+yz-wx(w+x)+1 .
\end{eqnarray*}
Similarly
\begin{eqnarray*}
    2{\rm Re}\bigl({\rm tr}(R_1R_2R_3)\bigr)
    & = & 2{\rm Re}(\rho\sigma\tau)-2|\rho|^2-2|\sigma|^2-2|\tau|^2+6 \\
    & = & -z+y+(w+1)(x+1)-2(w+x+y) \\
    & = & wx-w-x-y-z+1. 
\end{eqnarray*}
The result follows by substituting these expressions into $f_1$ and $f_2$. 
\end{proof}

\medskip

Finally, again following Kamiya, Parker, and Thompson \cite{kpt} Section~1.5, we can also express the determinant of the Hermitian form $H$ in terms of $f_2$.

\begin{lem}\cite{kpt}\label{lem-H-sig}
    The Hermitian form $H$ has the desired signature $(2,1)$ if and only if 
    $$
    f_2(w,x,y,z)=-wx+w+x+y+z-4>-1.
    $$
\end{lem}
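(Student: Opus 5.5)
The statement immediately above is Lemma~\ref{lem-H-sig}, the signature criterion. I will prove it by computing the Gram matrix of the polar vectors in the normalisation of Kamiya, Parker and Thompson. The vectors ${\bf n}_1,{\bf n}_2,{\bf n}_3$ span $\mathbb C^3$, so the Hermitian form in this basis has matrix
$$
H=\begin{pmatrix} 2 & \bar\rho & \tau\\ \rho & 2 & \bar\sigma \\ \bar\tau & \sigma & 2\end{pmatrix},
$$
with entries chosen to match $\langle{\bf n}_2,{\bf n}_1\rangle=\rho$, $\langle{\bf n}_3,{\bf n}_2\rangle=\sigma$ and $\langle{\bf n}_1,{\bf n}_3\rangle=\tau$.

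The first step is to expand the determinant:
$$
\det H = 8-2(|\rho|^2+|\sigma|^2+|\tau|^2)+2{\rm Re}(\rho\sigma\tau).
$$
Comparing with ${\rm tr}(R_1R_2R_3)=\rho\sigma\tau-|\rho|^2-|\sigma|^2-|\tau|^2+3$ gives $\det H = 2{\rm Re}\,{\rm tr}(R_1R_2R_3)+2$. By Lemma~\ref{lem-f1-f2} this equals $-f_2-1$. Alternatively, I can substitute $|\sigma|^2=w+1$, $|\tau|^2=x+1$, $|\rho|^2=y+1$ and $2{\rm Re}(\rho\sigma\tau)=(w+1)(x+1)+y-z$, which also yields $\det H=-(f_2+1)$. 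I should double-check the sign convention for $\rho\sigma\tau$ against the order of the entries; the real part is insensitive to conjugation, so this is safe.

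The second step is the signature argument. Each ${\bf n}_j$ is positive, so the leading $1\times1$ minor equals $2>0$, and $H$ therefore has at least one positive eigenvalue. The $2\times 2$ minor $4-|\rho|^2$ may have either sign, so I will avoid using Sylvester's criterion. Instead I use that a nondegenerate form on $\mathbb C^3$ has signature $(3,0)$, $(2,1)$, $(1,2)$ or $(0,3)$. The sign of $\det H$ is $+$, $-$, $+$, $-$ respectively, and $(0,3)$ is excluded because ${\bf n}_1$ is positive. So signature $(2,1)$ forces $\det H<0$.

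For the converse, suppose $\det H<0$. This rules out $(3,0)$ and $(1,2)$, and $(0,3)$ is already excluded, so the signature is $(2,1)$. Hence the signature is $(2,1)$ if and only if $\det H<0$, that is, if and only if $f_2>-1$. The only point needing care is excluding $(0,3)$, which is immediate since ${\bf n}_1$ is positive. I expect no real obstacle beyond keeping the conjugation conventions consistent in the determinant expansion.
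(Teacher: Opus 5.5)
Your proof is correct and follows the same route the paper takes by citing \cite{kpt}: compute $\det H=8-2(|\rho|^2+|\sigma|^2+|\tau|^2)+2{\rm Re}(\rho\sigma\tau)=-(f_2+1)$, then use that $H$ has signature $(2,1)$ exactly when $\det H<0$. Your matrix is the transpose of the one for the convention $\langle{\bf z},{\bf w}\rangle={\bf w}^*H{\bf z}$, which changes neither the determinant nor the signature, and your exclusion of signature $(0,3)$ via $\langle{\bf n}_1,{\bf n}_1\rangle=2>0$ supplies the step the paper leaves to the citation.
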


\section{Making a triangle group good}\label{sec-good-triangle}

In this section we consider more general discrete groups $\Gamma$ generated by three complex 
involutions for which at least one pair of mirrors intersect. 
We will show that, after possibly changing generators, we may assume that
$\Gamma$ is good, that is it satisfies Definition~\ref{def-good}. In other words, 
we can write $\Gamma=\langle R_1,R_2,R_3\rangle$ where the $R_j$ are complex reflections fixing
complex lines so that
$$
{\rm tr}(R_2R_3)\le{\rm tr}(R_1R_3)\le{\rm tr}(R_1R_2)\le{\rm tr}(R_1R_3R_2R_3)
$$
and if any of them is less than $3$ then it has the form $1+2\cos(2\pi/p)$ for some $p\in{\mathbb Z}$. 
In particular, for $(p_1,p_2,p_3;p_4)$ triangle groups we may change generators so that
$p_1\le p_2\le p_3\le p_4$. 

\begin{prop}\label{prop-good}
Let $R_1$, $R_2$, $R_3$ be involutions in ${\rm SU}(2,1)$ fixing complex geodesics $C_1$, $C_2$, $C_3$ in 
${\bf H}^2_{\mathbb C}$, respectively. Let $\Gamma=\langle R_1, R_2, R_3\rangle$ be the group they generate. 
Suppose that $\Gamma$ is 
discrete and non-elementary and that at least one pair of mirrorsof group elements intersect
(so $\Gamma$ contains at least one elliptic map that can be written as the product of two complex reflections). 
Then there exist an ordered set of involutions $\{R'_1,\,R'_2,\,R'_3\}$ generating $\Gamma$ 
so that
\begin{enumerate}
\item[(a)] ${\rm tr}(R'_2R'_3)\le {\rm tr}(R'_1R'_3)\le {\rm tr}(R'_1R'_2)\le {\rm tr}(R'_1R'_3R'_2R'_3)$;
\item[(b)] if any of ${\rm tr}(R'_2R'_3)$, ${\rm tr}(R'_1R'_3)$, ${\rm tr}(R'_1R'_2)$, ${\rm tr}(R'_1R'_3R'_2R'_3)$
is less than $3$ then it has the form $1+2\cos(2\pi/p)$ for some $p\ge 2\in{\mathbb N}$.
\end{enumerate}
\end{prop}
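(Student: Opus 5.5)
Our plan is a descent argument. The idea is to look at all ordered generating triples of involutions that can be obtained from $(R_1,R_2,R_3)$ by the standard moves of Kamiya, Parker and Thompson: permuting the generators, and replacing one generator by its conjugate by another, for example $R_2\mapsto R_3R_2R_3$. These moves preserve $\Gamma$, and they act on the quadruple of traces $(w,x,y,z)$ in a way that can be read off from Lemma~\ref{lem-f1-f2}. For instance, the move $(R_1,R_2,R_3)\mapsto(R_1,R_3R_2R_3,R_3)$ fixes $w$ and $x$ and swaps $y$ with $z$. Conjugating $R_1$ by $R_3$ instead sends $z$ to ${\rm tr}(R_3R_1R_3R_2)$, which again equals $z$ up to relabelling. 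In general every trace of a product of two of the new generators is the trace of a product of two reflections, $R_aR_b$ with $R_a,R_b\in\Gamma$ conjugate to generators.

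We first handle condition (b) using discreteness. If two reflections $R,R'$ in $\Gamma$ have intersecting mirrors, then $RR'$ is elliptic with eigenvalues $1,e^{\pm2i\theta}$. Discreteness forces $RR'$ to have finite order, since otherwise $\langle RR'\rangle$ would contain non-trivial elements arbitrarily close to the identity. Hence $2\theta\in 2\pi\mathbb Q$. Moreover, the group generated by $R$ and $R'$, restricted to the normal slice through $C\cap C'$, is a finite group generated by two complex reflections of order $2$. Such a group is a dihedral group, so $\theta=\pi k/p$. After replacing $R'$ by a suitable element of $\langle R,R'\rangle$ (a conjugate $R'(RR')^m$ that is again an involution) we may take $k=1$, so the trace is $1+2\cos(2\pi/p)$. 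A trace less than $3$ therefore always has the required form once each pair has been normalised in this way. This renormalisation must be done compatibly with keeping the triple a generating set. It does so, since $\langle R,R'\rangle$ is unchanged.

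For condition (a), we define the quantity $S=w+x+y$, or more simply the multiset of traces, and perform a minimisation. Consider the set $\mathcal T$ of values ${\rm tr}(AB)$, where $A,B$ run over pairs of reflections obtained from generating triples reachable by the moves. Each such trace is $4N-1\ge -1$. The values below $3$ lie in the discrete set $\{1+2\cos(2\pi/p)\}$ by the previous paragraph, which accumulates only at $3$. Hence a minimum exists below $3$, and by hypothesis at least one value is below $3$. The key step is then an algorithm. Choose the triple realising the smallest trace as $w={\rm tr}(R_2R_3)$. Next, among the triples with this $w$, minimise $x$, then $y$. Any violation of (a) is then repaired by a move that strictly decreases one of these in lexicographic order:
\begin{itemize}
\item the permutations of indices sort $w,x,y$;
\item if $z<y$, the move swapping $y$ and $z$ (conjugating $R_2$ by $R_3$) decreases $y$.
\end{itemize}
Termination follows because values below $3$ form a discrete set, and values at least $3$ need no normal form beyond the ordering.

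The main obstacle is the step above when the traces involved are at least $3$, where discreteness gives no discreteness of values. Here we must show that the lexicographic descent still terminates. Our approach is to note that when $y\ge 3$ only finitely many swaps $y\leftrightarrow z$ can occur before order (a) holds, since each swap is an involution on the pair $\{y,z\}$ with $w,x$ fixed. So after at most one swap and one sort we are done, provided that sorting does not reintroduce a smaller $w$. This is guaranteed because $w$ was chosen minimal over all reachable triples. We must also check that the minimiser has its mirrors actually intersecting, so that non-elementarity is not lost, and that the hypothesis on signature from Lemma~\ref{lem-H-sig} is preserved, which holds automatically since the reflections are genuine elements of ${\rm SU}(2,1)$.
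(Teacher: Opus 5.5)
Your normalisation of a single pair (replacing $R'$ by $R'(RR')^m$) is exactly the paper's \emph{reduce} move and is fine. The gap is in how you combine it with the ordering: you never control how normalising one trace affects the others, and the termination claims you make in its place do not hold.

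First, the traces ${\rm tr}(AB)<3$ of reachable pairs have the form $1+2\cos(2k\pi/p)$ with arbitrary $k$, not $1+2\cos(2\pi/p)$. Such values are not confined to a discrete set. So neither the existence of your lexicographic minimiser nor ``termination because values below $3$ form a discrete set'' follows.

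Second, your functional points the wrong way. Turning $x=1+2\cos(2k\pi/p)$ into $1+2\cos(2\pi/p)$ by replacing $R_1$ with $(R_3R_1)^rR_3$ \emph{raises} $x$, and it also changes $y$ and $z$, possibly destroying their order or normal form. A lexicographic minimiser of raw traces will therefore typically violate (b). Enforcing (b) then destroys the minimality you used for (a), and you must reorder and renormalise again with nothing to stop the cycle.

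Third, even pure reordering need not finish after ``one swap and one sort''. Swapping $R_2,R_3$ sends $(w,x,y,z)\mapsto(w,y,x,z+w(y-x))$, and the new last entry can again be smaller than the new third entry. The paper gets finiteness of this stage because reordering only conjugates $R_1$ by elements of the finite group $\langle R_2,R_3\rangle$.

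The idea you are missing is a monotone geometric quantity through which discreteness acts. The paper chooses the pair $R_2,R_3$ so that $R_2R_3$ has minimal \emph{order} $p_1$. This is a minimum over positive integers, i.e.\ over normalised rather than raw traces. It then sets $v_1=C_2\cap C_3$.

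Reordering only conjugates $R_1$ by elements of $\langle R_2,R_3\rangle$, which fix $v_1$, so $\rho(v_1,C_1)$ is unchanged. Normalising keeps the vertex $v_2=C_1\cap C_3$ and lowers the angle there from $k\pi/p$ to $\pi/p$. By the complex sine rule $\sinh(\rho(v_1,C_1)/2)=\sin\theta\,\sinh(\rho(v_1,v_2)/2)$, the distance $\rho(v_1,C_1)$ therefore strictly decreases.

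A non-terminating run would then give infinitely many distinct mirrors $C_1^{(n)}$ meeting a fixed compact ball about $v_1$, contradicting proper discontinuity. At termination, $w\le x$ follows from the minimality of $p_1$. Without an invariant of this kind your descent has no reason to stop.
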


The following corollary follows directly.

\begin{cor}\label{cor-good}
Let $\Gamma$ and $R'_1$, $R'_2$, $R'_3$ be as in Proposition~\ref{prop-good}. If ${\rm tr}(R'_1R'_3R'_2R'_3)<3$ then
$$
\begin{array}{ll}
{\rm tr}(R'_2R'_3)=1+2\cos(2\pi/p_1),\quad &
{\rm tr}(R'_1R'_3)=1+2\cos(2\pi/p_2), \\
{\rm tr}(R'_1R'_2)=1+2\cos(2\pi/p_3),\quad &
{\rm tr}(R'_1R'_3R'_2R'_3)=1+2\cos(2\pi/p_4),
\end{array}
$$
where $p_1,\,p_2,\,p_3,\,p_4\in{\mathbb Z}$ satisfy $2\le p_1\le p_2\le p_3\le p_4$.
\end{cor}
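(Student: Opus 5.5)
The corollary is a direct consequence of Proposition~\ref{prop-good}, so I will deduce it from parts~(a) and~(b) of that proposition without further geometry.

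First, suppose ${\rm tr}(R'_1R'_3R'_2R'_3)<3$. By part~(a) of Proposition~\ref{prop-good} this trace is the largest of the four traces, so each of
$$
{\rm tr}(R'_2R'_3)\le {\rm tr}(R'_1R'_3)\le {\rm tr}(R'_1R'_2)\le {\rm tr}(R'_1R'_3R'_2R'_3)
$$
is strictly less than $3$. Part~(b) then applies to all four traces, so each has the form $1+2\cos(2\pi/p)$ with $p\ge 2$ an integer. Call these integers $p_1,p_2,p_3,p_4$, in the order listed in the statement.

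Next, I convert the ordering of the traces into the ordering of the $p_j$. The function $p\mapsto 1+2\cos(2\pi/p)$ is strictly increasing for $p\ge 2$. Indeed, $2\pi/p$ decreases through $(0,\pi]$, and cosine is strictly decreasing on $[0,\pi]$. Hence the function is injective on $\{p\ge 2\}$, so each $p_j$ is uniquely determined by its trace. The inequalities between the traces in part~(a) then translate directly into $2\le p_1\le p_2\le p_3\le p_4$.

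There is no real obstacle here. The only points needing care are that the hypothesis ``less than $3$'' propagates to all four traces through the ordering in part~(a), and that the monotonicity of $p\mapsto 1+2\cos(2\pi/p)$ is used on the correct range $p\ge 2$, so that the integers $p_j$ are well defined.
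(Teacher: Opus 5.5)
Your proposal is correct and is exactly the deduction the paper intends when it says the corollary ``follows directly'' from Proposition~\ref{prop-good}. The bound on the largest trace forces all four traces below $3$, so part~(b) gives each the form $1+2\cos(2\pi/p_j)$. Strict monotonicity of $p\mapsto 1+2\cos(2\pi/p)$ for $p\ge 2$ then turns the ordering in part~(a) into $2\le p_1\le p_2\le p_3\le p_4$.
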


In order to prove Proposition~\ref{prop-good}, we give an algorithm that constructs $R'_1$, $R'_2$, $R'_3$. 
In what follows, we associate an ordered quadruple of positive real numbers $(w,x,y,z)$ to the ordered set 
$\{R_1,\,R_2,\,R_3\}$. These are defined by
$$
w={\rm tr}(R_2R_3),\quad x={\rm tr}(R_1R_3),\quad y={\rm tr}(R_1R_2),\quad z={\rm tr}(R_1R_3R_2R_3).
$$
When the $R_j$ are given a superscript etc then we give $w$, $x$, $y$, $z$ the same superscript.

We define two processes, called {\sl reorder} and {\sl reduce} that give steps in the algorithm.

\textbf{(1) The process reorder.} 
This process uses two automorphisms $\iota_3$ and $\iota_{23}$ defined as follows:
\begin{enumerate}
\item[(a)] The automorphism $\iota_3$ (see Section~1.4 of Kamiya, Parker, Thompson \cite{kpt}):
$$
\iota_3(R_1)=R_3R_1R_3,\quad \iota_3(R_2)=R_2,\quad \iota_3(R_3)=R_3.
$$
Then $\iota_3$ has the following effect on the traces (see Lemma~1.2 of \cite{kpt}):
$$
(w,x,y,z) \longmapsto (w,x,z,y).
$$
\item[(b)] The automorphism $\iota_{23}$:
$$
\iota_{23}(R_1)=R_1,\quad \iota_{23}(R_2)=R_3,\quad \iota_{23}(R_3)=R_2.
$$
Then $\iota_{23}$ has the following effect on the traces:
$$
(w,x,y,z)\longmapsto \bigl(w,y,x,z+w(y-x)\bigr).
$$
Note that $\iota_{23}(R_1R_3R_2R_3)=R_1R_2R_3R_2$. Using the formulae in Section~1.5 of Kamiya, Parker, and Thompson
\cite{kpt} we find that
\begin{eqnarray*}
{\rm tr}(R_1R_2R_3R_2) & = & |\rho\sigma-\bar{\tau}|^2-1 \\
& = & |\sigma\tau-\bar\rho|^2-1+(|\sigma|^2-1)(|\rho|^2-|\tau|^2) \\
& = & z+w(y-x).
\end{eqnarray*}
\end{enumerate}

The process reorder is then:
\begin{enumerate}
\item[(a)] If $z<y$ then perform $\iota_3$.
\item[(b)] If $y<x$ then perform $\iota_{23}$, and note that this means  $z+w(y-x)<z$.
If $z+w(y-x)<3$ and does not have the form $1+2\cos(2k\pi/p)$ for $k/p\in{\mathbb Q}$ then 
$\Gamma$ is not discrete and we stop. Otherwise proceed. 
\item[(c)] If $x\le y\le z$ then stop. 
\end{enumerate}

Note that each new trace produced by this process is bounded above by $\max\{x,y,z\}$.
We observe that the process reorder terminates after finitely many steps. In order to see this, we note that 
$\iota_{23}$ and $\iota_3$ are both involutions and so they generate a dihedral group. Any element of this group
either preserves or swaps $R_2$ and $R_3$ and it conjugates $R_1$ by an element of $\langle R_2,R_3\rangle$. Since we are interested in the case where $R_2R_3$ is elliptic of finite order, this process only generates finitely many new traces. So after finitely many steps
it either stops because it has found a new set of traces with $w\le x\le y$ as in (c), or else
it stops because it has produced an elliptic map if infinite order, as in (b). 

We note that
if $R_2R_3$ is parabolic or loxodromic, then it is not hard to show that there are only finitely many $m\in{\mathbb Z}$ so that $\bigl|{\rm tr}\bigl(R_1R_3(R_2R_3)^m\bigr)\bigr|$ is uniformly bounded above. Therefore, reorder stops after finitely many steps in these cases as well.

\medskip
 
\textbf{(2) The process reduce.}
We suppose that $y={\rm tr}(R_1R_3)=1+2\cos(2k\pi/p)$ where $2\le k\le p/2$ is coprime to $p$. 
(That is $R_1R_3$ has eigenvalues $1$, $e^{2k\pi i/p}$ and $e^{-2k\pi i/p}$.)
This means
there exist $r$ and $s$ so that $1=kr+ps$. Define $\iota$ by:
$$
\iota(R_1) = (R_3R_1)^rR_3,\quad \iota(R_2)=R_2,\quad \iota(R_3)=R_3.
$$
Since $(R_1R_3)$ has order $p$ we have $R_1R_3=\bigl((R_1R_3)^r\bigr)^k$ and so $\iota$ is indeed an automorphism.
We note that 
$$
{\rm tr}\bigl((R_1R_3)^r\bigr)=1+2\cos(2kr\pi/p)=1+2\cos\bigl(2(1-ps)\pi/p\bigr)=1+2\cos(2\pi/p).
$$

We can now prove Proposition~\ref{prop-good}.

\begin{proof} (Proposition~\ref{prop-good}.)
Since $\Gamma$ contains at least one pair of reflections whose mirrors intersect, and so
their product is elliptic, 
we may assume without loss of generality that $R_2R_3$ is elliptic and its order
$p_1\ge 2$ is the smallest order of any 
elliptic element of $\Gamma$ that can be written as the product
of two reflections. Let $C_2$ and $C_3$ be the mirrors of $R_2$ and $R_3$, and let
$v_1\in{\bf H}^2_{\mathbb C}$ be their intersection point. 
By taking powers if necessary, we also suppose the angle between $C_2$ and $C_3$
is $\pi/p_1$, and so ${\rm tr}(R_2R_3)=1+2\cos(2\pi/p_1)$.

We can now define the algorithm. 
\begin{enumerate}
\item[(1)] Write $R_1^{(1)}=R_1$, $R_2^{(1)}=R_2$, $R_3^{(1)}=R_3$. 
\item[(2)] Given $\{R_1^{(n)},\,R_2^{(n)},\,R_2^{(n)}\}$ perform the process reorder. 
If this does not stop because of an elliptic map of infinite order, then it produces 
new reflections $\{\widetilde{R}_1^{(n)},\,\widetilde{R}_2^{(n)},\,\widetilde{R}_3^{(n)}\}$.
Note that 
$$
{\rm tr}\bigl(\widetilde{R}_2^{(n)}\widetilde{R}_3^{(n)}\bigr)={\rm tr}(R_2R_3)=w.
$$
Define
$$
\tilde{x}_n={\rm tr}\bigl(\widetilde{R}_1^{(n)}\widetilde{R}_3^{(n)}\bigr),\quad
\tilde{y}_n={\rm tr}\bigl(\widetilde{R}_1^{(n)}\widetilde{R}_2^{(n)}\bigr),\quad 
\tilde{z}_n={\rm tr}\bigl(\widetilde{R}_1^{(n)}\widetilde{R}_3^{(n)}\widetilde{R}_2^{(n)}\widetilde{R}_3^{(n)}\bigr).
$$
Note that $w\le \tilde{x}_n\le \tilde{y}_n\le \tilde{z}_n$.
\item[(3a)] Given $\{\widetilde{R}_1^{(n)},\,\widetilde{R}_2^{(n)},\,\widetilde{R}_3^{(n)}\}$, if 
$$
\tilde{x}_n=1+2\cos(2k\pi/\tilde{p}_2^{(n)}),
$$ 
where $2\le k \le \tilde{p}_2^{(n)}/2$
is coprime to $\tilde{p}_2^{(n)}$, then perform reduce. Denote the resulting ordered triple of reflections by
$\{R_1^{(n+1)},\,R_2^{(n+1)},\,R_3^{(n+1)}\}$.
Note that 
$$
x_{n+1}={\rm tr}(R_1^{(n+1)}R_3^{(n+1)})=1+2\cos(2\pi/\tilde{p}_2).
$$
\item[(3b)] Given $\{\widetilde{R}_1^{(n)},\,\widetilde{R}_2^{(n)},\,\widetilde{R}_3^{(n)}\}$, if 
$$
\tilde{x}_n =1+2\cos(2\pi/\tilde{p}_2^{(n)}),\quad 
\tilde{y}_n=1+2\cos(2k\pi/\tilde{p}_3^{(n)}),
$$ 
where $2 \le k \le \tilde{p}_3^{(n)}/2$
is coprime to $\tilde{p}_3^{(n)}$, then apply $\iota_{23}$ and then perform reduce.
Denote the resulting ordered triple of reflections by
$\{R_1^{(n+1)},\,R_2^{(n+1)},\,R_3^{(n+1)}\}$. Note that 
$$
x_{n+1}={\rm tr}(R_1^{(n+1)}R_3^{(n+1)})=1+2\cos(2\pi/\tilde{p}_3).
$$
\item[(3c)] Given $\{\widetilde{R}_1^{(n)},\,\widetilde{R}_2^{(n)},\,\widetilde{R}_3^{(n)}\}$, if 
$$
\begin{array}{ll}
\tilde{x}_n =1+2\cos(2\pi/\tilde{p}_2^{(n)}),\quad &
\tilde{y}_n =1+2\cos(2\pi/\tilde{p}_3^{(n)}), \\
\tilde{z}_n=1+2\cos(2k\pi/\tilde{p}_4^{(n)}), & 
\end{array}
$$ 
where $2\le k\le \widetilde{p}_4^{(n)}/2$ is coprime to $\tilde{p}_4^{(n)}$, then apply $\iota_3$ followed by $\iota_{23}$ and then perform reduce.
Denote the resulting ordered triple of reflections by
$\{R_1^{(n+1)},\,R_2^{(n+1)},\,R_3^{(n+1)}\}$. Note that 
$$
x_{n+1}={\rm tr}\bigl(R_1^{(n+1)}R_3^{(n+1)}\bigr)=1+2\cos(2\pi/\tilde{p}_4).
$$
\item[(3d)] Given $\{\widetilde{R}_1^{(n)},\,\widetilde{R}_2^{(n)},\,\widetilde{R}_3^{(n)}\}$, if 
$$
\begin{array}{ll}
\tilde{x}_n=1+2\cos(2\pi/\tilde{p}_2^{(n)}), & \hbox{ or } \quad \tilde{x}_n\ge 3, \\
\tilde{y}_n=1+2\cos(2\pi/\tilde{p}_3^{(n)}), & \hbox{ or } \quad \tilde{y}_n\ge 3, \\
\tilde{z}_n=1+2\cos(2\pi/\tilde{p}_4^{(n)}), & \hbox{ or } \quad \tilde{z}_n\ge 3,
\end{array}
$$
then stop.
\end{enumerate}
It remains to show that this algorithm terminates after finitely many steps.

Write $C_j^{(n)}$ for the mirror of $R_j^{(n)}$ and $\widetilde{C}_j^{(n)}$ for the mirror of $\widetilde{R}_j^{(n)}$. 
Observe that, at each stage, $R_2^{(n)}$ is one of $R_2$ or $R_3$ and $R_3^{(n)}$ is the other one. Similarly
for $\widetilde{R}_2^{(n)}$ and $\widetilde{R}_3^{(n)}$. This means that for all $n$ 
$$
C_2^{(n)}\cap C_3^{(n)}=\widetilde{C}_2^{(n)}\cap \widetilde{C}_3^{(n)}=C_2\cap C_3=v_1.
$$
We claim that for all $n$
\begin{equation}\label{eq-claim-good}
\rho\bigl(v_1,\widetilde{C}_1^{(n)}\bigr)=\rho\bigl(v_1,C_1^{(n)}\bigr),\qquad 
\rho\bigl(v_1,C_1^{(n+1)}\bigr)<\rho\bigl(v_1,\widetilde{C}_1^{(n)}\bigr).
\end{equation}
First consider the process reorder. 
If we apply $\iota_{23}$ then $\iota_{23}(R_1)=R_1$, so its mirror is $C_1$ and $\rho(v_1,C_1)$ remains unchanged.
If we apply $\iota_3$ then $\iota_3(R_1)=R_3R_1R_3$ whose mirror is $R_3(C_1)$. We have
$$
\rho\bigl(v_1,R_3(C_1)\bigr)=\rho\bigl(R_3(v_1),R_3(C_1)\bigr)=\rho(v_1,C_1).
$$
For the first equality we used the fact that $R_3$ fixes $v_1$ and for the second equality we use that $R_3$ is an isometry. 
Therefore, when we apply the process reorder the distance between $v_1$ and $C_1$ does not change. This proves
the first identity from \eqref{eq-claim-good}
$$
\rho\bigl(v_1,\widetilde{C}_1^{(n)}\bigr)=\rho\bigl(v_1,C_1^{(n)}\bigr).
$$

Now we consider the process reduce. 
Let 
$$
\tilde{v}_2^{(n)}=\tilde{C}_3^{(n)}\cap \tilde{C}_1^{(n)}
\quad \hbox{ and }\quad v_2^{(n+1)}=C_3^{(n+1)}\cap C_1^{(n+1)}
$$ 
be the fixed
points of $\tilde{R}_1^{(n)}\tilde{R}_3^{(n)}$ and $R_1^{(n+1)}R_3^{(n+1)}$. By construction, 
$R_1^{(n+1)}R_3^{(n+1)}$ is a power of $\tilde{R}_1^{(n)}\tilde{R}_3^{(n)}$ and so their fixed points are the same. 
That is $\tilde{v}_2^{(n)}=v_2^{(n+1)}$ and we choose to simply write it as $\tilde{v}_2^{(n)}$.

Also, let $\tilde{u}_n$ be the foot of the perpendicular from $v_1$ to $\widetilde{C}_1^{(n)}$ and 
$u_{n+1}$ be the foot of the perpendicular from $v_1$ to $C_{n+1}$. Note that
$$
\rho\bigl(v_1,\widetilde{C}_1^{(n)}\bigr)=\rho(v_1,\tilde{u}_n),\qquad
\rho\bigl(v_1,C_1^{(n+1)}\bigr)=\rho(v_1,u_{n+1}).
$$
We now use the sine rule (see Theorem 3.2.1 of Goldman \cite{gol}) on the
complex triangle with vertices $v_1$, $\tilde{v}_2^{(n)}$ and $\tilde{u}_n$. This triangle has internal complex angles 
$k\pi/\tilde{p}_2^{(n)}$ at $\tilde{v}_2^{(n)}$ and $\pi/2$ at $\tilde{u}_n$. We do the same with the 
complex triangle with vertices $v_1$, $\tilde{v}_2^{(n)}$ and $u_{n+1}$. This has internal complex angles 
of $\pi/\tilde{p}_2^{(n)}$ at $\tilde{v}_2^{(n)}$ and $\pi/2$ at $u_{n+1}$. Therefore
\begin{eqnarray*}
\sinh\left(\frac{\rho(v_1,C_1^{(n+1)})}{2}\right) 
& = & \sinh\left(\frac{\rho(v_1,u_{n+1})}{2}\right) \\
& = & \sin\left(\frac{\pi}{\tilde{p}_2^{(n)}}\right)\sinh\left(\frac{\rho(v_1,\tilde{v}_2^{(n)})}{2}\right) \\
& < & \sin\left(\frac{k\pi}{\tilde{p}_2^{(n)}}\right)\sinh\left(\frac{\rho(v_1,\tilde{v}_2^{(n)})}{2}\right) \\
& = & \sinh\left(\frac{\rho(v_1,\tilde{u}_n)}{2}\right) \\
& = & \sinh\left(\frac{\rho( v_1,\tilde{C}_1^{(n)})}{2}\right).
\end{eqnarray*}
This completes the proof of \eqref{eq-claim-good}.

We are now in a position to prove that the algorithm terminates after finitely many steps. Suppose for a contradiction
that the algorithm does not terminate. Then it will give an infinite family $\{R_1^{(n)}\}$ of reflections in $\Gamma$ with
mirrors $\{C_1^{(n)}\}$. Let $K$ be the closed ball with centre $v_1$ and radius $r=\rho(v_1,C_1)$. 
From \eqref{eq-claim-good} we have 
$$
0<\ldots<\rho\bigl(v_1,C_1^{(n+1)}\bigr)<\rho\bigl(v_1,C_1^{(n)}\bigr)<\cdots<\rho(v_1,C_1^{(1)})=\rho(v_1,C_1)=r.
$$
Therefore the complex geodesics $\{C_1^{(n)}\}$ are distinct and they all intersect $K$. Since $R_1^{(n)}$ fixes 
$C_1^{(n)}$ this means that $R_1^{(n)}(K)\cap K$ is nonempty for all $n$. Therefore $\Gamma$ does not act
properly discontinuously on ${\bf H}^2_{\mathbb C}$, which is a contradiction to our hypothesis that $\Gamma$ is
discrete.

Therefore, after finitely many steps we obtain an ordered triple $\{\tilde{R}_1^{(n)},\,\tilde{R}_2^{(n)},\,\tilde{R}_3^{(n)}\}$
so that $\tilde{x}_n\le\tilde{y}_n\le\tilde{z}_n$ and 
$$
\begin{array}{ll}
\tilde{x}_n=1+2\cos(2\pi/\tilde{p}_2^{(n)}), & \hbox{ or } \quad \tilde{x}_n\ge 3, \\
\tilde{y}_n=1+2\cos(2\pi/\tilde{p}_3^{(n)}), & \hbox{ or } \quad \tilde{y}_n\ge 3, \\
\tilde{z}_n=1+2\cos(2\pi/\tilde{p}_4^{(n)}), & \hbox{ or } \quad \tilde{z}_n\ge 3.
\end{array}
$$
We also have 
$$
w={\rm tr}(R_2R_3)={\rm tr}\bigl(\tilde{R}_2^{(n)}\tilde{R}_3^{(n)}\bigr)=1+2\cos(2\pi/p_1).
$$
By hypothesis, $p_1$ was chosen to be the smallest order of an elliptic element of $\Gamma$. This means that
$p_1\le \tilde{p}_2^{(n)}$ and so $w\le \tilde{x}_n$. This proves the result.

\end{proof}

\medskip

We remark that if we drop the condition that at least one pair of mirrors intersect (which
guarantees the existence of the point $v_1$) then it is not immediately clear that the algorithm will stop after finitely many steps.

\section{The main result for large values of $p_1$ and $p_2$}\label{sec-large-p1-p2}

In this section we consider groups $\Gamma(p_1,p_2,p_3;p_4)$ with 
$3\le p_1\le p_2\le p_3\le p_4\le\infty$ where 
$$
50/9\le w+x={\rm tr}(R_2R_3)+{\rm tr}(r_1R_3)=\bigl(2\cos(2\pi/p_1)+1\bigr)
+\bigl(2\cos(2\pi/p_2)+1\bigr).
$$
The main result of this section is:

\begin{prop}\label{prop-main-large}
    Let $p_1$, $p_2$, $p_3$, $p_4$ be integers satisfying 
    $3\le p_1\le p_2\le p_3\le p_4\le \infty$. Define
    $$
    \begin{array}{ll}
    w=1+2\cos(2\pi/p_1), \quad & 
    x=1+2\cos(2\pi/p_2), \quad \\
    y=1+2\cos(2\pi/p_3), \quad & 
    z=1+2\cos(2\pi/p_4)+1.
    \end{array}
    $$
    If $p_1$ and $p_2$ satisfy $w+x\ge 50/9$ then  in the group $\Gamma(p_1,p_2,p_3;p_4)$,
    for all $p_3$ and $p_4$, the element $W_B=R_1R_2R_3$ is elliptic.
\end{prop}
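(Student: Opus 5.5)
The plan is to apply Goldman's criterion, Theorem~\ref{Goldman}. I will show that
$$
F\bigl({\rm tr}(R_1R_2R_3)\bigr)=f_1(w,x,y,z)^2+4f_2(w,x,y,z)^3<0
$$
on the whole admissible parameter region. Then $W_B$ is elliptic with distinct eigenvalues by Theorem~\ref{Goldman}~(b).

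The admissible region is defined as follows. Since $3\le p_1\le p_2\le p_3\le p_4\le\infty$, we have $0\le w\le x\le y\le z\le 3$. By Lemma~\ref{lem-H-sig}, we also need $f_2>-1$ for the group to exist. Throughout I write $s=w+x$ and $q=wx$. By Lemma~\ref{lem-f1-f2},
$$
f_2=-q+s+y+z-4,\qquad f_1=(s+y-6)(s+z-6)+6s-s^2+q(7-s)-20 .
$$

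\textbf{Step 1: $f_2<0$.} We use $y,z\le 3$. Since $x\le 3$ and $s\ge 50/9$, we have $w\ge s-3$. For fixed $s$, the product $q=w(s-w)$ is smallest at the extreme $w=s-3$, $x=3$, so $q\ge 3(s-3)$. Hence
$$
f_2\le -3(s-3)+s+2=11-2s\le 11-\tfrac{100}{9}=-\tfrac19 .
$$
Combining this with Lemma~\ref{lem-H-sig} gives $-1<f_2<0$. It therefore suffices to prove $|f_1|<2(-f_2)^{3/2}$.

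\textbf{Step 2: reduce the number of variables.} Fix $w$ and $x$, and put $\sigma=y+z$ and $\pi=yz$. Then $f_2$ depends only on $\sigma$. Also $f_1$ is affine in $\pi$, because $(s+y-6)(s+z-6)=\pi+(s-6)\sigma+(s-6)^2$. So for fixed $\sigma$, $F$ is a convex quadratic in $\pi$, and its maximum over the allowed $\pi$-interval occurs at an endpoint. The endpoints correspond to $y=z$, or to one of the constraints $y=x$ or $z=3$ being active. This reduces the problem to finitely many two-dimensional faces (for instance $y=z$, $y=x$, $z=3$, together with the constraint curves $f_2=-1$ and $s=50/9$). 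On each face $F$ is an explicit polynomial in two or three variables.

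\textbf{Step 3: check the boundary polynomials.} On each face, verify $F<0$ subject to $s\ge 50/9$, $w\le x\le 3$ and $f_2>-1$. I would again use monotonicity in $q$: for fixed $s$, $q$ ranges over $[3(s-3),s^2/4]$, and $F$ is quadratic in $q$ with a computable sign pattern. This should push the extremes to $x=3$ or $w=x$. The result is a one-variable polynomial inequality in $s\in[50/9,6]$. The threshold $50/9$ should appear as the point where this inequality becomes tight, approached at a degenerate corner such as $w=x=25/9$ with $y,z$ near $3$. I would then confirm by exact factorisation that $F$ is strictly negative for $s\ge 50/9$.

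\textbf{Main obstacle.} I expect Step 3 to be the main difficulty. It needs a careful case analysis and explicit polynomial factorisations that show sign-definiteness uniformly up to the boundary $s=50/9$. I also have to make sure that no interior critical point of $F$ is missed when the convexity argument is applied in the variable $q$ rather than $\pi$. If a corner turns out to give $F=0$ exactly, I would check that it lies outside the admissible set, either because $f_2\le-1$ there or because $p_j$ would fail to be an integer. This rules out the parabolic and repeated-eigenvalue cases of Theorem~\ref{Goldman}~(c) and~(d).
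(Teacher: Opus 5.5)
Your Steps~1 and~2 are fine. In particular, for fixed $w,x$ and $\sigma=y+z$ the function $F$ is convex in $\pi=yz$, so its maximum lies on $y=z$, $y=x$ or $z=3$. The gap is in Step~3: its target, $F<0$ for all real parameters with $s\ge 50/9$, is false, so no factorisation can establish it.

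On $w=x$, $y=z=3$ one has
$F(w,w,3,3)=-(w+1)(3-w)^2(4w^2-5w-17)$.
This is strictly positive for $25/9\le w<(5+3\sqrt{33})/8\approx 2.7792$. Concretely, $F(25/9,25/9,3,3)=272/59049>0$, with $f_2=-13/81>-1$. So the corner you expected to be tight is in fact loxodromic.

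The paper's reduction ends at the three corners $(y,z)\in\{(x,x),(x,3),(3,3)\}$. The first two are nonpositive down to $s=50/9$, with equality at $(w,x)=(23/9,3)$. The corner $(3,3)$ is only nonpositive for $s\ge(5+3\sqrt{33})/4$.

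The missing idea is arithmetic rather than analytic. Since $w,x$ lie in the discrete set $\{1+2\cos(2\pi/p)\}$, one checks directly (only $p_1\in\{10,11,12,13\}$ need inspection) that no pair $3\le p_1\le p_2$ has $50/9\le w+x\le(5+3\sqrt{33})/4$. This is the paper's Lemma~\ref{lemma-11}. Your fallback of invoking integrality only at isolated corners where $F=0$ does not cover an open band where $F>0$.

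Second, strict negativity also fails for admissible integer data. For $w=x=y=z$ one computes $f_1=-2(w-2)^3$ and $f_2=-(w-2)^2$, so $F(w,w,w,w)=0$ identically. Now take $p_1=p_2=p_3=p_4=p$ with $14\le p<\infty$. Then $2w\ge 2+4\cos(\pi/7)\approx 5.604>50/9$ and $f_2>-1$, so these groups exist and lie in the region. They cannot be discarded as inadmissible, as you propose.

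The right goal is $F\le 0$, as in Corollary~\ref{cor-1} and Lemma~\ref{lemma-8}, whose equality cases include $w=x$. You then need a separate argument that $R_1R_2R_3$ is diagonalisable when $F=0$, so that Theorem~\ref{Goldman}~(c) applies rather than~(d).

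A smaller slip: for fixed $s,y,z$, $F$ is cubic, not quadratic, in $q$, because $f_2$ is affine in $q$ and enters cubed.
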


Setting $p_4=\infty$, so $R_1R_3R_2R_3$ is parabolic, we immediately obtain

\begin{cor}\label{cor-main-large}
    For integers $p_1$, $p_2$, $p_3$ satisfying $3\le p_1\le p_2\le p_3
    \le \infty$, we define
    $$
    w=1+2\cos(2\pi/p_1), \quad 
    x=1+2\cos(2\pi/p_2), \quad 
    y=1+2\cos(2\pi/p_3).
    $$
    If $w+x\ge 50/9$ then the triangle group $\Gamma(p_1,p_2,p_3)$ has Type-$B$.
\end{cor}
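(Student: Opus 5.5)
Corollary~\ref{cor-main-large} follows from Proposition~\ref{prop-main-large} by specialising to $p_4=\infty$; the argument has two steps.

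\textbf{Step 1: $W_A$ is parabolic.} Take the group $\Gamma(p_1,p_2,p_3)$ at the parameter value where $C_1$ and $C_{323}=R_3(C_2)$ are asymptotic. Then
$$
z={\rm tr}(R_1R_3R_2R_3)=3,
$$
and this parameter value realises $\Gamma(p_1,p_2,p_3;\infty)$. It is admissible: $y\le 3=z$, so the ordering $p_3\le p_4=\infty$ in Proposition~\ref{prop-main-large} holds, and signature $(2,1)$ is ensured via Lemma~\ref{lem-H-sig}. At this value $F(3)=0$ and $W_A=R_1R_3R_2R_3$ is not the identity. Since $C_1$ and $C_{323}$ meet only at a boundary point, $W_A$ fixes no point of $\mathbf H^2_{\mathbb C}$ and is not loxodromic. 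By Theorem~\ref{Goldman}(d) it is therefore parabolic.

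\textbf{Step 2: $W_B$ is elliptic there.} Proposition~\ref{prop-main-large} applies with $p_4=\infty$ (reading $z=3$, which is clearly the intended value). It gives that $W_B=R_1R_2R_3$ is elliptic for every $p_3$. So at the parameter value where $W_A$ becomes parabolic, $W_B$ is already elliptic, and hence not loxodromic. The group is therefore not of Type-$A$ at that value.

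\textbf{Step 3: Type-$B$.} This is the only real subtlety: Schwartz's definition asks for a value where $W_B$ is parabolic and $W_A$ is loxodromic. The point to justify is that, along the one-parameter deformation starting from the $\mathbb R$-Fuchsian (Lagrangian) value where both words are loxodromic, $W_B$ must become non-loxodromic before $W_A$ does. Since $W_B$ is elliptic at the first parameter value where $W_A$ is parabolic, continuity gives an earlier value where $F({\rm tr}\,W_B)=0$ while $W_A$ is still loxodromic. At that value $W_B$ is parabolic, or at worst the boundary case between loxodromic and elliptic. In keeping with how the paper describes Type-$A$ and Type-$B$ (which word becomes elliptic first), this shows the group has Type-$B$.
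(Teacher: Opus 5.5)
Your proposal is correct and follows the paper's route. The paper obtains Corollary~\ref{cor-main-large} in one line by setting $p_4=\infty$ (so $z=3$ and $W_A$ is parabolic) in Proposition~\ref{prop-main-large}, and it reads Type-$B$ as ``$W_B$ is already elliptic when $W_A$ becomes parabolic'', exactly as stated in the introduction. Your Step~3 continuity argument is an extra the paper leaves implicit. To produce a zero of $F$ strictly before $z=3$, it needs $F(w,x,y,3)<0$ strictly, not merely that $W_B$ is elliptic; the chain of Lemmas~\ref{lemma-1}--\ref{lemma-8} together with Lemma~\ref{lemma-11} does give this strict inequality for admissible integer $p_j$.
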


Our strategy for proving this result will be to use the polynomials $f_1(w,x,y,z)$ and
$f_2(w,x,y,z)$ from Lemma~\ref{lem-f1-f2} and to show that when $w+x\ge 50/9$ then
$$
F(w,x,y,z)=f_1(w,x,y,z)^2+4f_2(w,x,y,z)^3<0. 
$$

For all the results below, with the exception of Lemma~\ref{lemma-11}, we will suppose that $w$, $x$, $y$, $z$ are continuous variables lying in the interval $[0,3]$. In particular, we will take partial derivatives of $F(w,x,y,z)$ 
with respect to various functions of these variables.

It will often be useful to make a simple change of variables. Namely, we write 
$$
u=(w+x)/2\qquad \text{and}\qquad v=\sqrt{wx},
$$
so 
$$
w=u-\sqrt{u^2-v^2},\qquad \text{and}\qquad
x=u+\sqrt{u^2-v^2}.
$$
Write $\tilde{F}(u,v,y,z)=F(w,x,y,z)$ for this function in the new variables. 
Note that we have
$$
w\le v\le u \le x
$$
with equality in each place if and only if $w=x$. Moreover, Note that when
$w\le x\le 3$ we have $u\le (w+3)/2$ and 
\begin{equation}\label{eq-u-v-ineq}
0\le (3-w)(3-x)=9-6u+v^2.
\end{equation}
Thus
$$
u\le 3/2+v^2/6
$$
with equality if and only if $x=3$.

\medskip

It will be useful to give inequalities for $w$, $x$, and $v$ that follow from our hypothesis $w+x\ge 50/9$. 

\begin{lem}\label{lem-6}
Suppose that $w\le x\le 3$. If $w+x=2u\ge 50/9$ then
$w\ge 23/9$ and $wx=v^2\ge 23/3$. In particular, $p_1\ge 10$. 
\end{lem}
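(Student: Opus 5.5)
Since $x\le 3$, the hypothesis $w+x\ge 50/9$ immediately gives $w\ge 50/9-x\ge 50/9-3=23/9$. For the product I will use the fact that $wx$ is minimized, for fixed sum, at the most unbalanced admissible point. Concretely, fix $s=w+x\ge 50/9$ and write $wx=w(s-w)$. This is a concave function of $w$ on the allowed interval $s-3\le w\le s/2$ (the lower end coming from $x\le 3$). Its minimum is therefore attained at an endpoint. At $w=s/2$ the value is $s^2/4$. At $w=s-3$ the value is $3(s-3)$. Since $s^2/4-3(s-3)=(s-6)^2/4\ge 0$, the minimum is $3(s-3)$. This is increasing in $s$, so
$$
wx\ge 3\left(\frac{50}{9}-3\right)=3\cdot\frac{23}{9}=\frac{23}{3}.
$$
Equivalently, one can read this off \eqref{eq-u-v-ineq}: $v^2\ge 6u-9\ge 6\cdot\frac{25}{9}-9=\frac{23}{3}$. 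That gives the inequality $v^2=wx\ge 23/3$ in one line, and I would present it that way.

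For the consequence $p_1\ge 10$, I use $w=1+2\cos(2\pi/p_1)\ge 23/9$. This means $\cos(2\pi/p_1)\ge 7/9\approx 0.7778$. Since cosine is increasing in $p_1$ here, it suffices to check that $p_1=9$ fails. We have $\cos(2\pi/9)=\cos 40^\circ\approx 0.7660<7/9$, so $p_1=9$ is excluded, and smaller $p_1$ give even smaller cosine. Hence $p_1\ge 10$, and indeed $\cos 36^\circ\approx 0.809\ge 7/9$ is consistent with this.

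There is no real obstacle. The only point needing care is the numerical comparison $\cos(2\pi/9)<7/9$. If an exact argument is wanted instead of a decimal value, I would note that $c=\cos 40^\circ$ satisfies $4c^3-3c=\cos 120^\circ=-1/2$. Evaluating $8c^3-6c+1$ at $c=7/9$ gives $8\cdot343/729-42/9+1=(2744-3402+729)/729=71/729>0$. Since this cubic is increasing for $c>1/2$ and vanishes at $\cos 40^\circ$, it follows that $\cos 40^\circ<7/9$.
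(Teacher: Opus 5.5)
Your proposal is correct and follows essentially the same route as the paper: $w\ge 50/9-3=23/9$, then $v^2\ge 6u-9\ge 23/3$ from \eqref{eq-u-v-ineq}, then $p_1\ge 10$ from $1+2\cos(2\pi/9)<23/9$. The concavity argument for $wx$ and the exact cubic check that $\cos(2\pi/9)<7/9$ are valid optional extras; the paper uses only the one-line inequality and a decimal comparison.
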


\begin{proof}
First, observe that
$$
w=(w+x)-x\ge 50/9-3=23/9.
$$
Note that $1+2\cos(2\pi/9)<23/9<1+2\cos(2\pi/10)$ and so $p_1\ge 10$.
Also using \eqref{eq-u-v-ineq} we have
$$
v^2\ge 6u-9\ge 50/3-9=23/3.
$$
\end{proof}

\medskip

The proof of Proposition~\ref{prop-main-large} will be by way of a series of lemmas, each of which is proved using elementary calculus. In particular, we will keep some variables constant and show that the function $F$ is either monotone or convex in the other variable(s). We do this in two stages.
In the first stage, we eliminate dependence on $y$ and $z$, see Corollary~\ref{cor-1}. We change variables to $u=(w+x)/2$ and $v=\sqrt{wx}$ and we use the hypothesis that $v\ge \sqrt{23/3}$ rather than $2u\ge 50/9$, see Lemma~\ref{lem-6}.

\begin{lem}\label{lemma-1}
Suppose $8/3\le v\le 3$ and $y\le \min\{2v-8/3,\,3\}$.
Then:
\begin{enumerate}
    \item[(a)] $\tilde{F}(u,v,y,z)\le \tilde{F}(u,v,y,y)$;
\item[(b)] $\tilde{F}(u,v,y,y)\le \tilde{F}(u,v,x,x)$, 
with equality if and only if $x=y$.
\end{enumerate}
\end{lem}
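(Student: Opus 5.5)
The plan is to work directly with the explicit polynomials of Lemma~\ref{lem-f1-f2}, rewritten in the variables $u=(w+x)/2$, $v=\sqrt{wx}$. Since $w+x=2u$ and $wx=v^2$, we get
$$
f_1=(2u+y-6)(2u+z-6)+c(u,v),\qquad c(u,v)=12u-4u^2+v^2(7-2u)-20,
$$
and $f_2=-v^2+2u+y+z-4$. Both are symmetric in $y$ and $z$, and $f_1$ is bilinear in $(y,z)$. Throughout I will use the standing constraints $v\le u\le 3/2+v^2/6$ from \eqref{eq-u-v-ineq}, $y\le z\le 3$ (since $p_3\le p_4$), and the hypotheses $8/3\le v\le 3$, $y\le 2v-8/3$.

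For (a), fix $u,v,y$ and regard $\tilde F$ as a function of $z\in[y,3]$. Its derivative is
$$
\frac{\partial \tilde F}{\partial z}=2f_1\,(2u+y-6)+12f_2^2 .
$$
I would show this is $\le 0$ on the region, so that $\tilde F$ is non-increasing in $z$ and (a) follows. The factor $2u+y-6$ can have either sign, so the argument should proceed in two steps.
\begin{enumerate}
\item[(i)] Bound $f_1$ and $f_2$ using the linearity of each in $z$: each attains its extreme values at $z=y$ or $z=3$, so the check reduces to those endpoints.
\item[(ii)] Use $y\le 2v-8/3$ together with $u\le 3/2+v^2/6$ to control $2u+y-6$ against $f_2^2$.
\end{enumerate}
If the sign does not work out directly, I would instead try convexity: $\partial^2\tilde F/\partial z^2=2(2u+y-6)^2+24f_2\ge0$ whenever $f_2\ge0$. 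Then it suffices to compare the endpoint values $z=y$ and $z=3$, that is, to check $\tilde F(u,v,y,3)\le \tilde F(u,v,y,y)$.

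For (b), set $g(t)=\tilde F(u,v,t,t)$. Then
$$
g'(t)=4f_1(2u+t-6)+24f_2^2,
$$
where now $f_1=(2u+t-6)^2+c$ and $f_2=2t+2u-v^2-4$. The claim $g(y)\le g(x)$ for $y\le x$, with equality only when $x=y$, follows once $g'>0$ on $[y,x]$. Because the admissible range of $t$ up to $3$ need not lie below $2v-8/3$, I would aim to prove $g'(t)>0$ for all $t\in[8/3,3]$ and all admissible $(u,v)$. This is a one-variable cubic in $t$ with coefficients polynomial in $(u,v)$. I would first substitute the extreme value of $u$ in terms of $v$, since $g'$ is monotone or convex in $u$, reducing to a two-variable polynomial inequality on a compact box, and then verify positivity by checking corners and critical points.

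The main obstacle is (a). Near the relevant parameter values $f_1$ is close to $0$ (for $u\approx 2.8$, $v^2\approx 23/3$ one finds $c\approx-7$, while the product term can reach about $+7$), and $f_2$ changes sign. So there is very little slack, and the precise hypothesis $y\le 2v-8/3$ must be used exactly where $2u+y-6>0$. I would first try the convexity-plus-endpoint reduction, since it only requires the single polynomial inequality $\tilde F(u,v,y,3)\le\tilde F(u,v,y,y)$. That difference factors through $(3-y)$, leaving a polynomial to be checked on the region by monotonicity in $u$.
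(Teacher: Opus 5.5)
Both halves of your plan fail on the actual region of the lemma.

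\textbf{Part (a).} The inequality $\partial\tilde F/\partial z\le 0$ that you aim for is false. Take the admissible point $u=v=y=8/3$, $z=3$ (note $y=2v-8/3$ there). Then $2u+y-6=2$, $f_1=2/27$ and $f_2=-1/9$, so $\partial\tilde F/\partial z=8/27+4/27=4/9>0$. It stays positive along $(u,v,y,z)=(v,v,v,3)$, for example at $v=2.7$.

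Your convexity fallback never applies. From $2u\le 3+v^2/3$, $z\le 3$ and $y\le\min\{2v-8/3,3\}$ you get $f_2\le -\tfrac{2}{3}v^2+\min\{2v-8/3,3\}+2<0$ on the whole region. In fact $\partial^2\tilde F/\partial z^2=-6(v-2)^2<0$ at $u=v=y=z$. So $z\mapsto\tilde F(v,v,v,z)$ leaves $0$ with zero slope, is concave there, dips, and then increases near $z=3$. It is neither monotone nor convex, so comparing the endpoint values $z=y$ and $z=3$ tells you nothing about interior $z$.

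The missing idea is to divide out $(z-y)$ for every $z$, not only for $z=3$. Write $\tilde F(u,v,y,z)-\tilde F(u,v,y,y)=(z-y)\tilde g_1(u,v,y,z)$ and prove $\tilde g_1\le 0$. The paper does this in two steps:
\begin{enumerate}
\item[(i)] it shows $\partial\tilde g_1/\partial u<0$, so $u=v$ is the worst case;
\item[(ii)] it shows the Hessian of $\tilde g_1$ in $(y,z)$ is positive definite, so it suffices to check $\tilde g_1(v,v,\cdot,\cdot)\le 0$ at the vertices of the polygon $v\le y\le\min\{2v-8/3,3\}$, $y\le z\le 3$.
\end{enumerate}

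\textbf{Part (b).} You have the order of $x$ and $y$ reversed. In this setting $v\le u\le x\le y\le z$ (since $p_2\le p_3$). So $\tilde F(u,v,y,y)\le\tilde F(u,v,x,x)$ requires $g(t)=\tilde F(u,v,t,t)$ to be non-increasing on $[x,y]$, i.e.\ $g'\le 0$, which is the opposite of what you propose.

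Your target $g'>0$ on $[8/3,3]$ fails for every admissible $(u,v)$. At $t=u$ one has $g'=12(u^2-v^2)(5u-2v^2-2)$, and $5u-2v^2-2\le\tfrac{11}{2}-\tfrac{7}{6}v^2<0$. So $g'<0$ there whenever $u>v$, and $g'=0$ when $u=v$.

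The corrected sign does not hold on all of $[u,3]$ either. At $u=v=8/3$ one gets $g'(3)=4(v-3)(2v^3+2v^2-15v-19)=740/81>0$.

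Your worry that $t$ ``need not lie below $2v-8/3$'' comes from the reversal. With the correct order, the hypothesis confines $t$ to $[u,\min\{2v-8/3,3\}]$, and this is exactly where it is used. Since $g'''(t)=24(2u+t+2)>0$, $g'$ is convex in $t$, so it suffices to check $g'\le 0$ at two points:
\begin{enumerate}
\item[(i)] at $t=u$, using the factored expression above;
\item[(ii)] at $t=\min\{2v-8/3,3\}$, where the value is decreasing in $u$, so one reduces to $u=v$.
\end{enumerate}
This is the paper's argument.
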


Note that $8/3<\sqrt{23/3}$ and so this result applies when $w+x\ge 50/9$.

\begin{lem}\label{lemma-2}
Suppose $\sqrt{23/3}\le v\le 17/6$ and $2v-8/3\le y\le 3$.
Then:
\begin{enumerate}
\item[(a)] $\tilde{F}(u,v,y,z)\le \max\bigl\{\tilde{F}(u,v,y,y),\tilde{F}(u,v,y,3)\bigr\}$;
\item[(b)] $\tilde{F}(u,v,y,y)\le \max\bigl\{\tilde{F}(u,v,x,x),\tilde{F}(u,v,3,3)\bigr\}$;
\item[(c)] $\tilde{F}(u,v,y,3)\le \max\bigl\{\tilde{F}(u,v,x,3),\,\tilde{F}(u,v,3,3)\bigr\}$.
\end{enumerate}
Note that if $v>17/6$ then $2v-8/3>3$ and the condition on $y$ is vacuous. 
\end{lem}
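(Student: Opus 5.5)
We will prove all three parts by showing that $\tilde F$, as a function of the single variable being moved, is convex on the relevant interval, so that its maximum is attained at an endpoint. Lemma~\ref{lem-f1-f2} makes this feasible: with $w+x=2u$ and $wx=v^2$,
$$
f_1=(2u+y-6)(2u+z-6)+g(u,v),\qquad f_2=-v^2+2u+y+z-4,
$$
where $g(u,v)=12u-4u^2+v^2(7-2u)-20$. So $f_1$ and $f_2$ are affine in $y$ and in $z$ separately, and bilinear or quadratic along the diagonal $z=y$.

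For (a), fix $u,v,y$ and vary $z\in[y,3]$. Here $\partial_z f_1=2u+y-6$, $\partial_z f_2=1$, and both $f_1,f_2$ are affine in $z$. Hence
$$
\partial_z^2F=2(2u+y-6)^2+24f_2 .
$$
We bound this from below using the hypotheses. Since $u\ge v$, $z\ge y\ge 2v-8/3$ and $v\ge\sqrt{23/3}$:
$$
2u+y-6\ge 4v-26/3>0 \qquad\text{and}\qquad f_2\ge -v^2+6v-28/3 .
$$
On $[\sqrt{23/3},17/6]$ the latter bound is at least about $-0.4$. The former gives $(2u+y-6)^2\ge(4\sqrt{23/3}-26/3)^2\approx 5.8$. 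Therefore $\partial_z^2F\ge 2(5.8)-24(0.4)>0$. So $F$ is convex in $z$, and its maximum over $[y,3]$ is at $z=y$ or $z=3$. (One could also simply invoke Lemma~\ref{lem-H-sig}, $f_2>-1$, combined with a sharper lower bound on $2u+y-6$ if needed.)

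For (c), set $z=3$ and vary $y$. Now $f_1=(2u+y-6)(2u-3)+g$ and $f_2=-v^2+2u+y-1$ are affine in $y$, so
$$
\partial_y^2F=2(2u-3)^2+24f_2 .
$$
This is positive by the same kind of bounds: $2u-3\ge 2\sqrt{23/3}-3$, and $f_2$ at $z=3$ is even larger than before. Hence $F(u,v,\cdot,3)$ is maximised at an endpoint of the admissible $y$-interval $[\max\{x,2v-8/3\},3]$.

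If the left endpoint is $x$, we are done. If instead $x<2v-8/3$, the endpoint $y_0=2v-8/3$ is exactly the boundary case $y=\min\{2v-8/3,3\}$ of Lemma~\ref{lemma-1}. At that endpoint we compare with $y=x$: the same convexity in $y$ holds on $[x,y_0]$ (the bounds above only used $y\ge x\ge v$ there, which I would recheck), so the endpoint value is at most the maximum of the values at $x$ and $3$. Part (b) follows the same template, with the comparison at $y_0$ supplied directly by Lemma~\ref{lemma-1}(b) applied to the diagonal.

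For (b), restrict to the diagonal $z=y$. Here $f_1=(2u+y-6)^2+g$ and $f_2=-v^2+2u+2y-4$, so
$$
\partial_y^2F=8(2u+y-6)^2+4f_1+96f_2 = 12(2u+y-6)^2+4g+96f_2 .
$$
This is the step I expect to be the main obstacle: $g$ can be quite negative and the term $96f_2$ is also near $-40$, so positivity of $\partial_y^2F$ is not automatic.

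I would first try to bound $12(2u+y-6)^2+4g(u,v)+96f_2$ below using $u\in[v,3/2+v^2/6]$ (from \eqref{eq-u-v-ineq}), $y\ge 2v-8/3$ and $v\in[\sqrt{23/3},17/6]$. The expression is a polynomial in $u$, $v$, $y$ of low degree, and I would minimise it over this box-like region. If convexity fails somewhere, the fallback is to show instead that $\partial_yF(u,v,y,y)$ changes sign at most once, from negative to positive. This would follow from checking $\partial_yF$ at the endpoints together with monotonicity of $\partial_yF$ in $y$ via the third derivative, which is affine in $y$ here. Either route again yields that the maximum is at $y=\max\{x,2v-8/3\}$ or $y=3$, and the first endpoint is handled by Lemma~\ref{lemma-1}(b) as above.
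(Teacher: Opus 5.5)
Your strategy matches the paper's: in each part you show convexity in the variable being moved and pass to the endpoints. On the diagonal you use Lemma~\ref{lemma-1}(b) to get from $y=2v-8/3$ down to $y=x$ when $x<2v-8/3$; the paper's written proof leaves this bridging step implicit, so making it explicit is a plus. Part (a) is complete, since your two lower bounds combine to exactly the paper's estimate $\tfrac12\partial_z^2\tilde F\ge\tfrac49(9v^2+6v-83)>0$.

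The gap is part (b). Convexity of $y\mapsto\tilde F(u,v,y,y)$ on $[2v-8/3,3]$ is the one delicate estimate in the lemma, and you only announce an intention to minimise the second derivative. It closes as follows. One has
$$
\tfrac14\,\partial_y^2\tilde F(u,v,y,y)=3y^2+12(u+1)y+8u^2-2(v^2+6)u-17v^2-8 .
$$
This is increasing in $y\ge 0$; that is exactly what the third derivative $\partial_y^3\tilde F(u,v,y,y)=24(2u+y+2)>0$, which you mention, gives you. So it suffices to put $y=2v-8/3$. The result, $\tfrac13\bigl(24u^2-6(v^2-12v+22)u-15v^2-24v-56\bigr)$, is increasing in $u$ for $u\ge v$. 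Hence it is at least $\tfrac13(-6v^3+81v^2-156v-56)$, which is about $1.9$ at $v=\sqrt{23/3}$ and increases on $[\sqrt{23/3},3]$. As you suspected, this margin is small compared with the individual terms, so the computation has to be done rather than asserted.

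Your fallback for (b) is misstated. A positive third derivative makes $\partial_y\tilde F$ convex, not monotone. A convex function can change sign twice ($+,-,+$), which would allow an interior maximum. The fallback only works once you add $\partial_y\tilde F(u,v,y,y)\le 0$ at $y=2v-8/3$, which is established inside the proof of Lemma~\ref{lemma-1}(b).

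In (c) the detour through $y_0=2v-8/3$ is unnecessary, and Lemma~\ref{lemma-1} plays no role there, since it says nothing about $y\mapsto\tilde F(u,v,y,3)$. Also, your first bound in (c) used $y\ge 2v-8/3$, not just $y\ge x\ge v$. The recheck you defer is precisely the paper's argument, and it succeeds: with only $u\ge v$ and $y\ge x\ge v$, we get $\tfrac12\partial_y^2\tilde F(u,v,y,3)=(2u+3)^2-12v^2+12y-12\ge -8v^2+24v-3$. This is positive exactly for $v<(6+\sqrt{30})/4\approx 2.869$. So the hypothesis $v\le 17/6$ is what makes it work, and it gives convexity on all of $[x,3]$ in one step.
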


The following corollary is obtained by combining the results of Lemmas~\ref{lemma-1} and~\ref{lemma-2} and writing the function $F$ in terms of the variables $(w,x,y,z)$.

\begin{cor}\label{cor-1}
Suppose that $w+x\ge 50/9$.
Then
$$
 F(w, x, y, z)\le 
 \max\bigl\{F(w,x,x,x),\,F(w,x,x,3),\,F(w, x, 3, 3)\bigr\}.  
$$
\end{cor}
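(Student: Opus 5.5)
The corollary follows from combining Lemma~\ref{lemma-1} and Lemma~\ref{lemma-2}, once we check that the hypothesis $w+x\ge 50/9$ places us in the range where these lemmas apply. By Lemma~\ref{lem-6}, $w+x\ge 50/9$ gives $v=\sqrt{wx}\ge\sqrt{23/3}>8/3$. Also $v\le 3$, since $w\le x\le 3$. Throughout we use the standing ordering $w\le x\le y\le z\le 3$, so $y\in[x,3]$ and $z\in[y,3]$. We then split according to the position of $y$ relative to the threshold $2v-8/3$.

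\emph{Case 1: $y\le \min\{2v-8/3,3\}$.} Lemma~\ref{lemma-1}(a) gives $\tilde F(u,v,y,z)\le\tilde F(u,v,y,y)$. Lemma~\ref{lemma-1}(b) gives $\tilde F(u,v,y,y)\le \tilde F(u,v,x,x)$. Translating back to the original variables, $F(w,x,y,z)\le F(w,x,x,x)$.

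\emph{Case 2: $2v-8/3\le y\le 3$.} This case can only occur when $v\le 17/6$; otherwise $2v-8/3>3$ and we are in Case 1. So $\sqrt{23/3}\le v\le 17/6$, and Lemma~\ref{lemma-2} applies. Part (a) bounds $\tilde F(u,v,y,z)$ by the larger of $\tilde F(u,v,y,y)$ and $\tilde F(u,v,y,3)$. Part (b) bounds the first of these by $\max\{\tilde F(u,v,x,x),\tilde F(u,v,3,3)\}$. Part (c) bounds the second by $\max\{\tilde F(u,v,x,3),\tilde F(u,v,3,3)\}$. Chaining these inequalities, $F(w,x,y,z)$ is at most the maximum of $F(w,x,x,x)$, $F(w,x,x,3)$ and $F(w,x,3,3)$.

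In both cases the bound is dominated by the stated maximum, which proves the corollary. There is no real obstacle here. The only points needing care are that the two cases cover every admissible $y$, and that the change of variables $(u,v)\leftrightarrow(w,x)$ is a bijection on $w\le x$ that fixes $y,z$, so the $\tilde F$ inequalities translate directly into inequalities for $F$.
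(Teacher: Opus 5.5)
Your proof is correct and is exactly the paper's route: the paper obtains the corollary by combining Lemmas~\ref{lemma-1} and~\ref{lemma-2}, and your case split at $y=\min\{2v-8/3,3\}$, with $v\ge\sqrt{23/3}>8/3$ coming from Lemma~\ref{lem-6}, is that combination made explicit. One point is implicit but is not a gap in your argument: when $x<2v-8/3$, the proof of Lemma~\ref{lemma-2}(b) reaches $\tilde F(u,v,x,x)$ only by also using Lemma~\ref{lemma-1}(b) at $y=2v-8/3$; you are entitled to use Lemma~\ref{lemma-2}(b) as stated.
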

\medskip

The second stage of the proof of Proposition~\ref{prop-main-large} involves finding conditions under which each of the functions from Corollary~\ref{cor-1}, namely $F(w,x,x,x)$, $F(w,x,x,3)$ and $F(w,x,3,3)$, is non-positive. For these lemmas, we assume $w\ge 23/9$, which is 
true when $w+x\ge 50/9$; see Lemma~\ref{lem-6}.

\medskip

\begin{lem}\label{lemma-8}
Suppose that $23/9\le  w\le 3$. Then
\begin{enumerate}
    \item[(a)] $F(w,x,x,x)\le 0$ with equality if and only if $w=x$ or $(w,x)=(23/9,3)$; 
    \item[(b)] if $\max\bigl\{50/9-w,\,w\bigr\}\le x\le 3$ then $F(w,x,x,3)\le 0$ with equality if and only if $(w,x)=(23/9,3)$ or $(3,3)$;
    \item[(c)] if $\max\bigl\{(5+3\sqrt{33})/4-w,\,w\bigr\}\le x\le 3$ then $F(w, x, 3, 3)\le 0$ with equality if and only if $(w,x)=(23/9,3)$, $(3,3)$ or $\bigl((5+3\sqrt{33})/8,(5+3\sqrt{33})/8\bigr)$. 
\end{enumerate}
\end{lem}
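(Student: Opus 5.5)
Each part of Lemma~\ref{lemma-8} concerns an explicit polynomial in $(w,x)$, so the plan is to compute $f_1$ and $f_2$ from Lemma~\ref{lem-f1-f2} on the slices $y=z=x$, $y=x,\ z=3$ and $y=z=3$. We then show $F=f_1^2+4f_2^3\le 0$ on the stated region by factoring out the obvious zero sets. On the slice $y=z=x$ we get $f_2=-wx+w+3x-4=-(w-3)(x-1)-1$, hence $f_2<0$ for $w<3,\ x>1$. Also
$$f_1=(w+2x-6)^2+6(w+x)-(w+x)^2+wx(7-w-x)-20 .$$
Since $F\le0$ iff $f_1^2\le -4f_2^3=4|f_2|^3$, the condition is a comparison of a square with a cube.

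For (a) the first step is to expand $F(w,x,x,x)$ as a polynomial and show it is divisible by $(x-w)^2$. At $w=x$ the product $W_B$ should have a repeated eigenvalue, by the extra symmetry of the configuration, so $F$ vanishes there to even order. Write $F(w,x,x,x)=(x-w)^2G(w,x)$; it then remains to prove $G\le 0$ on $23/9\le w\le x\le 3$, with $G=0$ only at $(23/9,3)$. I would do this with the substitution $x=w+t$, $t\in[0,3-w]$, and show that $G$ is monotone (or concave) in $t$ for fixed $w$. This reduces the claim to the edges $t=0$ and $x=3$. On $x=3$, $G(w,3)$ is a one-variable polynomial; it should have a root at $w=23/9$ and be negative on $(23/9,3]$. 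The constant $23/9$ in Lemma~\ref{lem-6} suggests that $(23/9,3)$ is exactly where this edge polynomial vanishes.

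For (b) and (c) I would follow the same pattern, but now the lower constraint $x\ge 50/9-w$, respectively $x\ge (5+3\sqrt{33})/4-w$, is active. This suggests the variables $u=(w+x)/2$ and $v^2=wx$ from the change of variables above, since the constraint is then simply $u\ge 25/9$, respectively $u\ge(5+3\sqrt{33})/8$. The steps are as follows.
\begin{enumerate}
\item[(i)] Express $\tilde F$ on each slice as a polynomial in $u$ and $v^2$.
\item[(ii)] Show that for fixed $u$ it is monotone in $v^2$ on the admissible range $6u-9\le v^2\le u^2$, coming from \eqref{eq-u-v-ineq}. The maximum then lies on the boundary $w=x$ ($v=u$) or $x=3$ ($v^2=6u-9$).
\item[(iii)] Check the two resulting one-variable polynomials in $u$ for $u$ at least the threshold. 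The threshold in (c) is presumably the root of the boundary polynomial on $w=x$, and that root produces the third equality case $\bigl((5+3\sqrt{33})/8,(5+3\sqrt{33})/8\bigr)$. The point $(3,3)$, where all traces equal $3$ and $W_B$ is parabolic or unipotent, gives the other equality case.
\end{enumerate}
The endpoints $(23/9,3)$ and $(3,3)$ should be verified by direct substitution.

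The main obstacle is step (ii): establishing one-signedness of the relevant partial derivative, or of the cofactor $G$, on a two-dimensional region. If monotonicity fails, I would instead write $F=f_1^2+4f_2^3$ and bound each factor separately. Concretely, I would show $|f_1|\le 2|f_2|^{3/2}$ by comparing both sides at the boundary and using convexity of $|f_2|^{3/2}$ in the linear quantity $f_2$. As a last resort, I would split the region into small boxes and use interval bounds on the polynomial coefficients. Determining the sign of the leftover quartic or quintic factor on the boundary edges is routine: locate the roots via Sturm sequences or by exhibiting the factorization.
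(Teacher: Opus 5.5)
Your part (a) is fine and matches the paper. The cofactor is explicit, $F(w,x,x,x)=(3-w)(x-w)^2\bigl((3-w)x^2-4\bigr)$, and $(3-w)x^2-4\le 23-9w\le 0$. Two slips there: to push a maximum to the edges you need $G$ monotone or \emph{convex}, not concave. Also, on this slice $f_2=(3-w)(x-1)-1$, which is negative because $(3-w)(x-1)\le 8/9$, not merely because $w<3$, $x>1$.

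The genuine gap is step (ii), which carries all of (b) and (c), and it fails as stated. For (b), step (i) is already wrong. The slice $y=x$ breaks the symmetry $w\leftrightarrow x$, so $F(w,x,x,3)$ is not a polynomial in $u,v^2$; only (c) is, namely $F(w,x,3,3)=\bigl(v^2(7-2u)-11\bigr)^2+4(2u+2-v^2)^3$. Worse, the monotonicity itself is false. Writing $u=3-a$ with $0<a\le 2/9$ and $w=2u-x$, one finds
\[
\frac{\partial}{\partial x}F(2u-x,x,x,3)=
\begin{cases}
a\,(-10+42a-24a^2+4a^3)<0 & \text{at } x=u,\\
a\,(16-152a+384a^2+96a^3)>0 & \text{at } x=3.
\end{cases}
\]
So $F$ is monotone in $v^2$ on \emph{no} slice $u=\mathrm{const}$ with $25/9\le u<3$. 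For example, at $u=2.9$ the values of $F(2u-x,x,x,3)$ at $x=2.9,\,2.95,\,3$ are about $-0.0203$, $-0.0324$, $-0.0176$. In (c) the direction of monotonicity switches: $F$ is decreasing in $v^2$ at $u=2.8$ and increasing near $u=(5+3\sqrt{33})/8$. In a thin band near $u\approx 2.78$ it is non-monotone as well. The conclusion that the maximum sits on $w=x$ or $x=3$ survives, but only through a convexity (no interior maximum) argument that your plan does not contain.

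Your fallbacks do not supply that argument. Since $(-f_2)^{3/2}$ is convex, it lies \emph{below} its chords, so checking $|f_1|\le 2(-f_2)^{3/2}$ at the ends of a slice says nothing about the interior. Box subdivision with interval bounds cannot certify the non-strict inequality $F\le 0$ near the boundary zeros $(23/9,3)$, $(3,3)$ and $\bigl((5+3\sqrt{33})/8,(5+3\sqrt{33})/8\bigr)$ without separate local analysis.

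The missing idea is to slice in a direction where convexity holds. The paper fixes $w$ and puts $x=2u-w$. Then $F(w,2u-w,2u-w,3)$ and $F(w,2u-w,3,3)$ are quartics in $u$ whose second $u$-derivatives are quadratics in $u$, shown non-negative on the domain by locating the vertex (Lemmas~\ref{lem-7.5} and~\ref{lemma-7}). Convexity in $u$ reduces everything to the endpoints $u=\max\{w,25/9\}$ (resp.\ $\max\{w,(5+3\sqrt{33})/8\}$) and $u=(w+3)/2$. There $F$ factors explicitly, as $-w(3-w)^2(3w-8)$, $-(3-w)^3(9w-23)$ and $-(w+1)(3-w)^2(4w^2-5w-17)$. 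The remaining pieces are the two one-variable factorisations along $u=25/9$ and $u=(5+3\sqrt{33})/8$.
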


The bound on $w+x=2u$ of $w+x\ge (5+3\sqrt{33})/4$ from Lemma~\ref{lemma-8}~(c) is larger 
than the bound $w+x\ge 50/9$ that we want. However, we will show that there are no
values of $p_1$ and $p_2$ for which 
$w=1+2\cos(2\pi/p_1)$, $x=1+2\cos(2\pi/p_2)$
and $50/9\le (w+x)\le (5+3\sqrt{33})/4$.

\begin{lem}\label{lemma-11}
Let $p_1,\,p_2\in{\mathbb Z}\cup \{\infty\}$ with $3\le p_1\le p_2$. Then either 
$$
\bigl(1+2\cos(2\pi/p_1)\bigr)+\bigl(1+2\cos(2\pi/p_2)\bigr)< 50/9,
$$
or
$$
\bigl(1+2\cos(2\pi/p_1)\bigr)+\bigl(1+2\cos(2\pi/p_2)\bigr)> (5+3\sqrt{33})/4.
$$
\end{lem}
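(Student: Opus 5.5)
The plan is a direct finite check, made possible by the monotonicity of $t(p)=1+2\cos(2\pi/p)$. This function is strictly increasing in $p\ge 3$, with $t(p)\to 3$ as $p\to\infty$ and $t(\infty)=3$. Write
$$
s(p_1,p_2)=t(p_1)+t(p_2),\qquad s_-=50/9\approx 5.5556,\qquad s_+=(5+3\sqrt{33})/4\approx 5.5584 .
$$
The gap $(s_-,s_+)$ is very narrow, so we show that no value $s(p_1,p_2)$ lands in $[s_-,s_+]$.

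First we reduce to finitely many $p_1$. If $s\ge s_-$ then, exactly as in Lemma~\ref{lem-6}, $t(p_1)\ge s_- - 3=23/9$, which forces $p_1\ge 10$. On the other hand, $s\ge 2t(p_1)$, and $2t(p_1)>s_+$ as soon as $\cos(2\pi/p_1)>(s_+-2)/4\approx 0.8896$. This holds for $2\pi/p_1<0.474$, that is, for all $p_1\ge 14$. We verify this cleanly for $p_1=14$ by checking $2t(14)>s_+$ numerically with rigorous error bounds, and monotonicity of $t$ handles all larger $p_1$, including $p_1=\infty$. So only $p_1\in\{10,11,12,13\}$ remain.

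Next, for each fixed $p_1$ in this range, $s(p_1,p_2)$ is strictly increasing in $p_2$. Hence it suffices to find the threshold $q=q(p_1)$ with
$$
s(p_1,q-1)<s_-\qquad\text{and}\qquad s(p_1,q)>s_+ .
$$
Monotonicity then places every $p_2$ (including $p_2=\infty$, where $s=t(p_1)+3$) on the correct side. If $s(p_1,p_1)$ already exceeds $s_+$, we take $q=p_1$, and only the single inequality $s(p_1,p_1)>s_+$ is needed. For example, for $p_1=10$ we have $t(10)=(3+\sqrt5)/2$, and the candidate threshold is $q=26$:
$$
s(10,25)\approx 5.5552<s_-,\qquad s(10,26)\approx 5.5599>s_+ .
$$
Analogous thresholds must be computed for $p_1=11,12,13$. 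In these cases $q$ is smaller, and for $p_1=13$ possibly $q=p_1$, since $2t(13)\approx 5.54$ is close to the range.

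The main obstacle is rigour in these near-coincidences. The margins are of order $10^{-3}$ to $10^{-4}$ (for instance $s_- - s(10,25)\approx 4\times 10^{-4}$). I would certify each inequality with explicit enclosures for $\cos(2\pi/p)$, either from truncated Taylor series with remainder bounds or from exact algebraic values where they are available ($\cos 36^\circ$ for $p_1=10$). Where a cosine is not expressible in radicals, I would instead use tight rational bounds and compare against the rational number $50/9$ and the quadratic irrational $s_+$; the latter comparison is done by squaring after isolating $\sqrt{33}$. This gives a finite list of inequalities, each verifiable by hand or by interval arithmetic, completing the proof.
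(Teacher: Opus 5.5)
Your plan is the paper's proof. Both use monotonicity of $1+2\cos(2\pi/p)$. Both discard $p_1\le 9$: the paper bounds by $U(9,\infty)\approx 5.53208<50/9$, which is equivalent to your $t(p_1)\ge 23/9$. Both discard $p_1\ge 14$ via $2t(14)\approx 5.60387$. Both then find a threshold $q(p_1)$ for each $p_1\in\{10,11,12,13\}$, and for $p_1=10$ you use the same pair $s(10,25)$, $s(10,26)$ as the paper.

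What you have not done is the check for $p_1=11,12,13$. This is not a formality. The existence of a $q$ with $s(p_1,q-1)<50/9$ and $s(p_1,q)>(5+3\sqrt{33})/4$ is precisely what the lemma asserts, so these cases remain unproved until checked. The paper's values are:
\begin{itemize}
\item $s(11,17)\approx 5.54745$ and $s(11,18)\approx 5.56189$;
\item $s(12,14)\approx 5.53398$ and $s(12,15)\approx 5.55914$;
\item $s(13,13)\approx 5.54182$ and $s(13,14)\approx 5.57284$.
\end{itemize}
In particular, for $p_1=13$ the threshold is $q=14$, not $q=p_1$, because $2t(13)<50/9$.

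Your insistence on certified enclosures is more careful than the paper, which simply quotes five-decimal approximations. The tightest margins are about $3.6\times 10^{-4}$ at $s(10,25)$ and $7\times 10^{-4}$ at $s(12,15)$.
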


\medskip

We now provide proofs of the lemmas mentioned above.

\medskip

\begin{proof} (Lemma~\ref{lemma-1}.)
We suppose $y\le \min\{3v-8/3,3\}$ and $8/3\le v\le 3$. 

We first prove part (a) of the lemma which says $\tilde{F}(u,v,y,z)\le\tilde{F}(u,v,y,y)$. 
Define $\tilde{g}_1$ by  
$$
\tilde{F}(u,v,y,z)-\tilde{F}(u,v,y,y)=(z-y)\tilde{g}_1(u,v,y,z).
$$ 
That is
\begin{eqnarray*}
\tilde{g}_1(u,v,y,z) & = & 
-8u^2v^2 - 4uv^2y + 12v^4 \\
&& \quad + 12u^2y + 4u^2z + 4uv^2 + 8uy^2 + 4uyz \\
&& \quad - 22v^2y - 12v^2z + y^3 + y^2z \\
&& \quad - 24uy + 12v^2 + 4y^2 + 4yz + 4z^2 + 16u - 4y - 12z.
\end{eqnarray*}

Consider 
\begin{eqnarray*}
\frac{\partial \tilde{g}_1}{\partial u}(u,v,y,z)
= 4(6y + 2z -4v^2)u + 8y^2 + 4yz - 24y + 16+ 4v^2(1-y) .
\end{eqnarray*}
If $2\le y\le z\le 3$ and $v^2\ge 13/2$ then
\begin{eqnarray*}
6y+2z-4v^2 & \le & 6y+2z-26 \le -2,\\
8y^2 + 4yz - 24y + 16+ 4v^2(1-y) & \le & 8y^2+12y-24y+16+26-26y \\
& = & 2(y-3)(4y-7)\le 0.
\end{eqnarray*}
Hence
$$
\frac{\partial \tilde{g}_1}{\partial u}(u,v,y,z)<0
$$
and so 
$$
\tilde{g}_1(u,v,y,z)\le \tilde{g}_1(v,v,y,z).
$$

For fixed $u$ and $v$ consider the Hessian matrix $H_{u,v}(y,z)$ of $\tilde{g}_1(u,v,y,z)$ with respect to the 
variables $y$ and $z$. It is given by
$$
H_{u,v}(y,z) = \left(\begin{matrix} 
16u+6y+2z+8 & 4u+2y+4 \\ 4u+2y+4 & 8 \end{matrix}\right).
$$ 
Note that $\det(H_{u,v}(y,z))=4(12-2u-y)(2u+y)+16(z-y)+48>0$. Therefore, we see $H_{u,v}(y,z)$ 
is positive definite, and so $\tilde{g}_1$ is convex in $y$ and $z$. 
Therefore we only need to consider the vertices of the polygon defined by 
$$
v\le y\le \min\{2v-8/3,\,3\},\quad y\le z\le 3.
$$
That is, $(y,z)$ is one of
$$
(v,v),\quad (v,3),\quad \bigl(\min\{2v-8/3,3\},\min\{2v-8/3,3\}\bigr),\quad \bigl(\min\{2v-8/3,3\},3\bigr).
$$
Note that $y\le \min\{2v-8/3,\,3\}$ means we have $y\le 2v-8/3$ for $8/3\le v\le 17/6$ and $y\le 3$ for
$v\ge 17/6$. We have 
\begin{enumerate}
\item[(a)] $\tilde{g}_1(v,v,v,v)=0$. 
\item[(b)] If $8/3\le v\le 3$ then 
$$
\tilde{g}_1(v,v,v,3)=-v(3-v)(3v-8)\le 0.
$$
\item[(c)] If $8/3\le v\le 17/6$ then  
$$
\tilde{g}_1(v,v,2v-8/3,2v-8/3)=\frac{-4(3v-8)(9v^3-72v^2+105v+76)}{27}\le0.
$$
\item[(d)] If $8/3\le v \le 17/6$ then 
$$
\tilde{g}_1(v,v,2v-8/3,3)=\frac{-4(3v-8)(9v^3-54v^2+78v+8)}{27}\le 0.
$$
\item[(e)] If $17/6\le v\le 3$ then
$$
\tilde{g}_1(v,v,3,3)=-2(3-v)(2v^3+2v^2-15v-19)\le 0.
$$
\end{enumerate}
The statements (c) and (e) give
$$
\tilde{g}_1\bigl(v,v,\min\{2v-8/3,3\},\min\{2v-8/3\}\bigr)\le 0.
$$
The statements (d) and (e) give
$$
\tilde{g}_1\bigl(v,v,\min\{2-8/3,3\},3\bigr)\le 0.
$$
By convexity, this means that $\tilde{g}_1(v,v,y,z)\le 0$. Hence,
$$
\tilde{F}(u,v,y,z)-\tilde{F}(u,v,y,y)
=(z-y)\tilde{g}_1(u,v,y,z)\le (z-y)\tilde{g}_1(v,v,y,z)\le 0.
$$
Therefore, $\tilde{F}(u,v,y,z)\le\tilde{F}(u,v,y,y)$ as claimed in part (a) of the lemma.

\medskip

We now prove part (b) of the lemma which says $\tilde{F}(u,v,y,y)\le\tilde{F}(u,v,x,x)$. 
We calculate
\begin{eqnarray*}
\frac{\partial\tilde{F}(u,v,y,y)}{\partial y}
& = & 4y^3+24(u+1)y^2 \\
&& \quad +4\bigl( 8u^{2}-2\left(v^{2}+6\right) u -17 v^{2}-8\bigr)y \\
&& \quad -16 v^{2} u^{2}+8\left(v^{2}+4\right) u +24 v^{4}+24 v^{2}.
\end{eqnarray*}
We claim that this function is non-positive and so $\tilde{F}(u,v,y,y)$ is a decreasing function of $y$.

Since
$$
\frac{\partial^3\tilde{F}(u,v,y,y)}{\partial y^3}=24(2u+y+2)>0,
$$
we see $\partial\tilde{F}(u,v,y,y)/\partial y$ is convex in $y$ and so when $u \le y\le 3$ we have
$$
\frac{\partial\tilde{F}(u,v,y,y)}{\partial y}
\le \max \left\{\frac{\partial\tilde{F}(u,v,y,y)}{\partial y}\Big|_{y=u}, \, 
\frac{\partial\tilde{F}(u,v,y,y)}{\partial y}\Big|_{y=\min\{2v-8/3, 3\}}\right\}.
$$
Note that when $v\le 17/6$ we have $2v-8/3\le 3$.

Initially, using $v\le u\le 3/2+v^2/6$, we have
\begin{eqnarray*}
\frac{\partial\tilde{F}(u,v,y,y)}{\partial y}\Big|_{y=u}
& = & 12(u^2-v^2)(5u-2v^2-2) \\
& \le & 2(u^2-v^2)(33-7v^2)\le 0,
\end{eqnarray*}
when $\sqrt{33/7}\le v\le u\le 3$. 

Second, using $y\le 2v-8/3$, $8/3\le v\le 17/6$, and $u\ge v$, we obtain
\begin{eqnarray*}
  27\times\frac{\partial\tilde{F}(u,v,y,y)}{\partial y}\Big|_{y=2v-8/3}&=& 9u^2(-48 v^{2}+192 v -2)\\
  &&+9u(-48 v^{3}+376 v^{2}-1056 v +992)\\
  &&+8(81 v^{4}-351 v^{3}+585 v^{2}-504 v +608).
\end{eqnarray*}
We observe that $-48 v^{2}+192 v -2<0$ when $2\le v\le 17/6$, and we then regard this expression as a quadratic function of $u$. This function has the axis of symmetry
$$
u=\frac{-6 v^{3}+47 v^{2}-132 v +124}{12 v^{2}-48 v +64}<0,
$$
when $2\le v\le 17/6$. This implies that
\begin{eqnarray*}
\frac{\partial\tilde{F}(u,v,y,y)}{\partial y}\Big|_{y=2v-8/3}&\le&\frac{\partial\tilde{F}(u,v,y,y)}{\partial y}\Big|_{y=2v-8/3, u=v}  \\
&=&\frac{-8 \left(3 v -8\right) \left(9 v^{3}-72 v^{2}+105 v +76\right)}{27},
\end{eqnarray*}
which is non-positive when $8/3\le v\le 17/6$.

Finally, using $17/6\le v\le 3$ and $u\ge v$ we get
\begin{eqnarray*}
\frac{\partial\tilde{F}(u,v,y,y)}{\partial y}\Big|_{y=3}
& = & -16u^2(v^2-6)-8u(2v^2-13)+24v^4-180v^2+228 \\
& \le & -16v^2(v^2-6) -8v(2v^2-13)+24v^4-180v^2+228 \\
& = & 4 \left(v -3\right) \left(2 v^{3}+2 v^{2}-15 v -19\right)\le 0.
\end{eqnarray*}

Hence, for $y\le \min\bigl\{2v-8/3, 3\bigr\}$ and $8/3\le v\le 3$, we have $\partial\tilde{F}(u,v,y,y)/\partial y\le 0$. Therefore, the function $\tilde{F}(u,v,y,y)$ is decreasing in $y$, and thus 
$$
\tilde{F}(u,v,y,y)\le \tilde{F}(u,v,x,x),
$$
for $y$ and $v$ in this range.

\end{proof}

\medskip

\begin{proof}(Lemma~\ref{lemma-2}.)
We suppose $\sqrt{23/3}\le v\le 17/6$ and $2v-8/3\le y\le z\le 3$.

We begin by proving part (a) of the lemma.
We have
\begin{eqnarray*}
\frac{1}{2}\frac{\partial ^2 \tilde{F}}{\partial z^2}(u,v, y,z)
& = &  (2u+y)^2-12v^2+12z-12 \\
& \ge & (2v+y)^2-12v^2+12y-12 \\
& \ge & (4v-8/3)^2-12v^2+24v-32-12 \\
& = & 16v^2-64v/3+64/9-12v^2+24v-44 \\
& = & (4/9)\bigl(9v^2 +6v-83\bigr)\ge0. 
\end{eqnarray*}
Therefore $\tilde{F}(u,v,y,z)$ is convex in $z$, and so
$$
\tilde{F}(u,v,y,z)\le \max\bigl\{\tilde{F}(u,v,y,y),\tilde{F}(u,v,y,3)\bigr\}.
$$
This proves part (a).

We now prove part (b). We obtain
\begin{eqnarray*}
\frac{1}{4} \frac{\partial^2\tilde{F}(u,v,y,y)}{\partial y^2} &=&  3 y^{2}+12\left(u+1\right) y +8 u^{2}-2\left( v^{2}+6\right) u -17 v^{2}-8\\
 &\ge &3 (2v-8/3)^{2}+12\left(u+1\right)(2v-8/3)\\
 &&\qquad\qquad\qquad\qquad\quad+8 u^{2}-2\left( v^{2}+6\right) u -17 v^{2}-8\\
 &=&(1/3)\bigl(24 u^{2}-6\left(v^{2}-12 v +22\right) u -15 v^{2}-24 v -56\bigr)\\
 &\ge&(1/3)\bigl(24 v^{2}-6\left(v^{2}-12 v +22\right) v -15 v^{2}-24 v -56\bigr)\\
 &=&(1/3)\bigl(-6 v^{3}+81 v^{2}-156 v -56\bigr).
\end{eqnarray*}
This is positive when $\sqrt{23/3}\le v\le 3$. Hence, for $\sqrt{23/3}\le v\le 17/6$ the function $\tilde{F}(u,v,y,y)$ is convex in $y$, which yields part (b).

We now prove part (c). 
Using $y\ge x\ge v$, we compute
\begin{eqnarray*}
        \frac{1}{2}\,\frac{\partial^2 \widetilde{F}(u,v,y,3)}{\partial y^2}
        & =  & (2u+3)^2-12v^2+12y-12 \\
        & \ge & (2v+3)^2-12v^2+12v-12 \\
        & = & \frac{2(17-6v)(6v-1)+7}{9}>0,
\end{eqnarray*}
when $1/6\le v\le 17/6$.
Therefore $\widetilde{F}(u,v,y,3)$ is a convex function of $y$. This gives part (c).
\end{proof}

\medskip

Before we prove Lemma~\ref{lemma-8} we show that $F(w,2u-w,2u-w,3)$ and $F(w,2u-w,3,3)$ are
convex functions of $u$ on the domain in question.

\begin{lem}\label{lemma-7}
    For $1\le w\le u\le  3$, we have
    $$
    \frac{\partial^2F(w, 2u-w, 3, 3)}{\partial u^2}\ge 0.
    $$
    Therefore $F(w,2u-w,3,3)$ is a convex function of $u$.
\end{lem}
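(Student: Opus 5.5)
The plan is to reduce the second derivative to an explicit polynomial in two variables and then check its sign by hand. First I will substitute $x=2u-w$ and $y=z=3$ into the formulae of Lemma~\ref{lem-f1-f2}. The $u^2$ and linear terms in $f_1$ cancel nicely, since $(2u-3)^2+12u-4u^2=9$, and I expect
$$
f_1 = w(2u-w)(7-2u)-11,\qquad f_2 = w^2-2uw+2u+2 .
$$
Then $f_2$ is affine in $u$ with $\partial f_2/\partial u = 2-2w$, and $f_1$ is quadratic in $u$ with $\partial f_1/\partial u = w(14+2w-8u)$ and $\partial^2 f_1/\partial u^2=-8w$.

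Since $F=f_1^2+4f_2^3$ and $\partial^2 f_2/\partial u^2=0$, differentiating twice gives
$$
\frac{\partial^2 F}{\partial u^2} = 2w^2(14+2w-8u)^2-16w\bigl(w(2u-w)(7-2u)-11\bigr)+96(1-w)^2\bigl(w^2-2uw+2u+2\bigr).
$$
This is a polynomial $P(w,u)$, quadratic in $u$, on the region $1\le w\le u\le 3$. I will write $u=w+t$ with $0\le t\le 3-w$. The three pieces then read as follows:
\begin{itemize}
\item the first term $2w^2(14-6w-8t)^2$ is a square and so is nonnegative;
\item in the second term, $w(2u-w)(7-2u)\le w(w+2t)(7-2w-2t)$ is bounded above, because $7-2u\ge 1$ and $w+2t\le 6-w$;
\item in the third term, $f_2=(w-u)^2+3-(u-1)^2 = t^2+3-(w+t-1)^2\ge -1$, so it can only be mildly negative, and only when $w$ is near $1$, where the factor $(1-w)^2$ is small.
\end{itemize}

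The main step is to show $P\ge 0$ on the triangle $\{1\le w\le 3,\ 0\le t\le 3-w\}$. The term $-16wf_1$ is the delicate one: it is negative once $w(2u-w)(7-2u)>11$, which happens for mid-range $u$, and there the square term $2w^2(14+2w-8u)^2$ may be small.

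I would first treat $P$ as a quadratic in $u$ for fixed $w$. Its leading coefficient is $128w^2+64w^2-192w(1-w)^2$, which I will compute exactly. If this coefficient is nonnegative, $P$ is convex in $u$, and the minimum is either at the vertex or at the endpoints $u=w$, $u=3$. At the endpoints $P$ becomes a one-variable polynomial in $w$ that I will factor, as was done for $\tilde g_1$ in Lemma~\ref{lemma-1}. At the vertex I will compute the discriminant and show it is nonpositive for $1\le w\le 3$.

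If the leading coefficient changes sign, I will split the range of $w$ (say at the root) and, on the concave part, use only the endpoint values. As a fallback I will bound $-16wf_1\ge 176w-16w^2\max_u\{(2u-w)(7-2u)\}$, using $\max_u (2u-w)(7-2u)=((7-w)/2)^2$. This gives a one-variable polynomial lower bound in $w$, which I would then check is nonnegative on $[1,3]$.
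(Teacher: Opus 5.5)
Your formulas $f_1=w(2u-w)(7-2u)-11$ and $f_2=w^2-2uw+2u+2$, and your expression for $P=\partial^2F/\partial u^2$, are correct. They agree with the paper's $\tfrac18P=24w^2u^2+(-36w^3-12w^2-72w+24)u+13w^4+4w^3+85w^2-26w+24$. Treating $P$ as a quadratic in $u$ and analysing the vertex and the endpoints is also exactly the paper's route.

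Two things are off. First, the leading coefficient is simply $192w^2$, because $96(1-w)^2f_2$ is affine in $u$ and contributes nothing to $u^2$. So $P$ is always convex in $u$ and there is no concave case to split off.

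Second, and this is a genuine gap, the vertex step as you state it fails: the discriminant is \emph{not} nonpositive on all of $1\le w\le 3$. At $w=3$ one gets $P=1728(u-3)(u-26/9)$, which is negative for $26/9<u<3$. The unconstrained minimum over $u$ remains negative for $w$ above roughly $2.984$. The lemma holds only because of the constraint $u\ge w$, and it is sharp: $P=0$ at $(w,u)=(3,3)$. Your heuristic about the third term is backwards. In fact $f_2=3$ when $w=1$, while $f_2=-1$ at $(3,3)$, where $(1-w)^2=4$ is largest. That corner is where all the difficulty lies.

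The missing step is to locate the vertex, $u_{\min}=(3w^3+w^2+6w-2)/(4w^2)$, and split the range of $w$.
For $1\le w\le 26/9$, the vertex value
\[
P(u_{\min})=4\bigl(-w^4-10w^3+59w^2-52w-48+72/w-12/w^2\bigr)
\]
is positive, so $P>0$ for all $u$.
For $26/9\le w\le 3$, one has $u_{\min}\le w$, which is equivalent to $w^3-w^2-6w+2\ge 0$. So on $[w,3]$ the minimum is at $u=w$, where $P=8(3-w)(-w^3+5w^2+2w+8)\ge 0$.
This is precisely the paper's argument.

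Your fallback cannot replace it, because it decouples the $u$-dependence of the three terms and throws away the square term. At $(3,3)$ the square term contributes $288$ and is exactly what brings $P$ up to $0$. Already at $w=2$ the fallback's bound is $0+(352-400)+0<0$, even though the true minimum of $P$ there is $84$.
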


\begin{proof}
    Direct computation yields
\begin{eqnarray*}
    \frac{1}{8}\frac{\partial^2 F(w, 2u-w, 3, 3)}{\partial u^2}&=& 24w^{2} u^{2} +\left(-36 w^{3}-12 w^{2}-72 w +24\right) u \\
    &&\qquad\qquad\qquad+13 w^{4}+4 w^{3}+85 w^{2}-26 w +24.
\end{eqnarray*}
When this is regarded as a quadratic function of $u$, it attains its minimum at $u=u_{\mathrm{min}}$, where
$$
u_{\mathrm{min}}=\frac{3 w^{3}+w^{2}+6 w -2}{4 w^{2}}. 
$$
and we have $u_{\rm min}\le w\le u$ when $26/9\le w\le 3$. 

We compute
$$
  \frac{1}{4}\frac{\partial^2 F(w, 2u-w, 3, 3)}{\partial u^2}\big|_{u=u_{\mathrm{min}}}
  = -w^{4}-10 w^{3}+59 w^{2}-52 w -48+\frac{72}{w}-\frac{12}{w^{2}},
$$
which is positive when $1\le w\le 26/9$, and 
\begin{eqnarray*}
 \frac{1}{8}\frac{\partial^2 F(w, 2u-w, 3, 3)}{\partial u^2}\big|_{u=w}&=& \left(3 -w\right) \left(-w^{3}+5 w^{2}+2 w +8\right)\ge 0,  
\end{eqnarray*}
when $23/9\le w\le 3$ with equality if and only if $w=3$. 
This completes the proof.
\end{proof}

\medskip

\begin{lem}\label{lem-7.5}
    For fixed $23/9\le w\le 3$, we have
    \[
    \frac{\partial^2 F(w,2u-w,2u-w,3)}{\partial u^2}\ge 0
    \]
    whenever $\max\{w,25/9\}\le u\le 3$.
    Therefore, $F(w,2u-w,2u-w,3)$ is a convex function of $u$ on this interval.
\end{lem}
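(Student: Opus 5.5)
The plan mirrors the proof of Lemma~\ref{lemma-7}. We fix $w$ and write $x=y=2u-w$, $z=3$, so that $F(w,2u-w,2u-w,3)=f_1^2+4f_2^3$ with $f_1,f_2$ taken from Lemma~\ref{lem-f1-f2}. Substituting gives
$$
f_2=-w(2u-w)+w+2(2u-w)+3-4=w^2-2uw+4u-w-1,
$$
which is linear in $u$. Similarly $f_1=(2u+2u-w-6)(2u-3)+12u-4u^2+w(2u-w)(7-2u)-20$ is quadratic in $u$. Hence $F$ is a quartic polynomial in $u$, and its second derivative
$$
\frac{\partial^2F}{\partial u^2}=2\bigl(f_1'\bigr)^2+2f_1f_1''+24f_2\bigl(f_2'\bigr)^2
$$
is a quadratic $Q_w(u)=a(w)u^2+b(w)u+c(w)$. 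The first step is to compute these coefficients explicitly, as the paper does, after dividing by a convenient constant.

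The second step is to check the sign of $a(w)$ on $[23/9,3]$. I expect $a(w)>0$, so that $Q_w$ is a convex parabola. Its vertex is then at $u_{\min}=-b(w)/(2a(w))$, and the argument splits into two cases according to where the vertex lies relative to the interval $I_w=[\max\{w,25/9\},3]$.

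In the first case $u_{\min}\notin I_w$. Then the minimum of $Q_w$ on $I_w$ is attained at the left endpoint. So it suffices to check $Q_w(w)\ge0$ for $w\ge25/9$ and $Q_w(25/9)\ge0$ for $23/9\le w\le25/9$. We expect $Q_w(w)$ to factor with a factor $(3-w)$, as in Lemma~\ref{lemma-7}. Since $Q_w(25/9)$ is a univariate polynomial in $w$, its positivity can be verified by bounding it on the interval or by counting its real roots.

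In the second case $u_{\min}\in I_w$, which happens for some sub-range of $w$. Here we check $Q_w(u_{\min})=c-b^2/(4a)\ge0$. After multiplying through by $a(w)>0$, this is the positivity of $4ac-b^2$, a univariate rational function or polynomial in $w$ on that sub-range.

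The main obstacle is this vertex case. The discriminant expression may come close to zero, and the endpoint checks are tight near $w=23/9$ (note that $F(23/9,3,3,3)=0$ appears in Lemma~\ref{lemma-8}). This is why the restriction $u\ge25/9$ is needed. If $4ac-b^2$ is not clearly positive, I would fall back on splitting the $w$-range further into small subintervals and bounding each coefficient monotonically there. Alternatively, one can use $u\ge\max\{w,25/9\}$ directly, together with a crude lower bound on $Q_w$ such as $Q_w(u)\ge Q_w(u_0)+Q_w'(u_0)(u-u_0)$, which holds by convexity, with $u_0$ the left endpoint.
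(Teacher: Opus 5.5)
This is essentially the paper's proof: the second derivative is the upward parabola $192(w-1)^2u^2-96(3w^3-7w^2+12w-10)u+8(13w^4-34w^3+88w^2-62w-16)$ in $u$, and its positivity is checked at the left endpoint of $I_w$, namely $u=25/9$ when $w\le 25/9$ and $u=w$ when $w\ge 25/9$. The vertex $u_{\min}=(3w^3-7w^2+12w-10)/\bigl(4(w-1)^2\bigr)$ increases with $w$ to $11/4<25/9$, so it always lies to the left of $I_w$; your vertex case therefore never occurs, and your first case should read ``left of $I_w$'' rather than ``outside $I_w$''. Note also that $Q_w(w)=8w^4+16w^3-256w^2+464w-128$ has no factor $(3-w)$, since it equals $40$ at $w=3$.
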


\begin{proof}
A direct calculation shows that
\begin{eqnarray*}
F(w,2u-w,2u-w,3) & = & 16(w-1)^2u^4 -16(3w^3-7w^2+12w-10)u^3 \\
&& \quad +4(13w^4-34w^3+88w^2-62w-16 )u^2 \\
&& \quad -4(6w^5-17w^4+57w^3-34w^2+12 w-24)u \\
&& \quad +4w^6-12w^5+49w^4-22w^3+37w^2-24w.
\end{eqnarray*}
We have
\begin{eqnarray*}
\frac{\partial^2 F(w,2u-w,2u-w,3)}{\partial u^2} 
& = & 192(w-1)^2u^2-96(3w^3-7w^2+12w-10)u \\
&& \quad +8(13w^4-34w^3+88w^2-62w-16).
\end{eqnarray*}
This is a quadratic function of $u$ that has its minimum at
$$
u=u_{\rm min}=\frac{3w^3-7w^2+12w-10}{4(w-1)^2},
$$
and we have $u_{\mathrm{min}}\le 25/9\le u$ when $23/9\le w\le 25/9$ and $u_{\mathrm{min}}\le w\le u$ when $25/9\le w\le 3$. We then compute 
\begin{eqnarray*}
    \lefteqn{\frac{\partial^2 F(w,2u-w,2u-w,3)}{\partial u^2} \big|_{u=25/9}}\\
    &=&\frac{2808 w^4 - 28944 w^3 + 109408 w^2 - 179792 w + 108544}{27}>0,
\end{eqnarray*}
when $23/9\le w\le 25/9$, and
$$
    \frac{\partial^2 F(w,2u-w,2u-w,3)}{\partial u^2} \big|_{u=w}
    =8 w^{4}+16 w^{3}-256 w^{2}+464 w -128>0,
$$
when $25/9\le w\le 3$.

Hence $F(w,2u-w,2u-w,3)$ is a convex function of $u$ when $23/9\le w\le 3$. 
\end{proof}

\begin{proof}(Lemma~\ref{lemma-8})
We suppose $23/9\le w\le 3$. 

We begin with part (a) of the lemma. A direct computation yields
\begin{eqnarray*}
    \lefteqn{F(w,2u-w,2u-w,2u-w)} \\
    & = & 4(3-w)(u-w)^2\bigl((23-9w)-(3-w)(9-(2u-w)^2)\bigr) \\
    & \le & 4(3-w)(u-w)^2(23-9w).
\end{eqnarray*}
Therefore $F(w,2u-w,2u-w,2u-w)\le 0$ with equality if and only if $u=w$ or 
$(w,u)=(23/9,25/9)$.

We now consider part (b) and we write $x=2u-w$ where 
\begin{equation}\label{eq-boundary-b}
\max\bigl\{w,\,25/9\bigr\}\le u\le (w+3)/2.
\end{equation}
From Lemma~\ref{lem-7.5} we see that $F(w,2u-w,2u-w,3)$ is a convex function of $u$.
We then compute
\begin{eqnarray*}
\lefteqn{ F(w,2u-w,2u-w,3)\Big|_{u=25/9} }\\
& = & 
\frac{(9w - 23)(2916w^5 - 49896w^4 + 338409w^3 - 1127915w^2 + 1844368w - 1185200)}{6561},
\end{eqnarray*}
which is non-positive when $23/9\le w\le 25/9$ and zero if and only if $w=23/9$,
$$
F(w,2u-w,2u-w,3)\Big|_{u=w}=-w(3-w)^2(3w-8),
$$
which is non-positive when $25/9\le w\le 3$ with equality if and only if $w=3$, and
$$
F(w,2u-w,2u-w,3)\Big|_{u=(w+3)/2}=-(3-w)^3(9w-23),
$$
which is non-positive when $23/9\le w\le 3$ with equality if and only if $w=23/9$ or $w=3$.
Therefore $F(w,2u-w,2u-w,3)$ is non-positive on the boundary of the region 
\eqref{eq-boundary-b}, which completes the proof of part (b) of the lemma.

We now consider part (c) of the lemma. We again write $x=2u-w$ and we suppose
\begin{equation}\label{eq-boundary-c}
\max\bigl\{w,\,(5+3\sqrt{33})/8\bigr\}\le u\le (w+3)/2.
\end{equation}
According to Lemma~\ref{lemma-7}, for given $23/9 \le w\le 3$, the function 
$F(w, 2u-w, 3, 3)$ is a convex function in $u$.
We then compute that
\begin{eqnarray*}
    F(w, 2u-w, 3, 3)\Big|_{u=(w+3)/2}&=&-\left(9 w -23\right) \left(3 -w\right)^{3}\le 0,
\end{eqnarray*}
with equality if and only if $w=23/9$ or $w=3$, 
\begin{eqnarray*}
    F(w, 2u-w, 3, 3)\big|_{u=w}&=&-\left(w +1\right) \left(3 - w\right)^{2}\left(4 w^{2}-5 w -17\right) \le0,
\end{eqnarray*}
which is non-positive when $(5+3\sqrt{33})/8\le w\le 3$ with equality if and only if $w=3$ or $w=(5+3\sqrt{33})/8$, and
\begin{eqnarray*}
    F(w, 2u-w, 3, 3)\big|_{u=\bigl(5+3\sqrt{33} \bigr)/8}
    &=& \frac{( 5+3 \sqrt{33}-8 w)^{2}\, h(w)}{128},
\end{eqnarray*}
where
\begin{eqnarray*}
    h(w)&=&8 w^{4}-\bigl(12 \sqrt{33}+20\bigr) w^{3}+\bigl(12 \sqrt{33}+302\bigr) w^{2}\\
    &&\qquad\qquad\qquad-\bigl(102 \sqrt{33}+102\bigr) w +33 \sqrt{33}+289,
\end{eqnarray*}
which is negative when $23/9\le w\le (5+3\sqrt{33})/8$. Therefore $F(w,2u-w,3,3)$ is non-positive
on the boundary of the region \eqref{eq-boundary-c}. This completes the proof of part (c).
\end{proof}

\begin{proof}(Lemma~\ref{lemma-11})
Observe that $\bigl(1+2\cos(2\pi/p)\bigr)$ for $p\in{\mathbb Z}\cup \{\infty\}$ with $3\le p$ is
a monotone increasing sequence lying in $[0,3]$. We examine the possible values to five decimal places, since in one instance we need to examine the fourth decimal place.
First, note that
$$
50/9 = 5.55555\ldots, \qquad 
(5+3\sqrt{33})/4= 5.55842\ldots. 
$$
In what follows, write 
$$
U(p_1,p_2)=\bigl(1+2\cos(2\pi/p_1)\bigr)+\bigl(1+2\cos(2\pi/p_2)\bigr) .
$$
We need to show that for all pairs $p_1\le p_2$ either $U(p_1,p_2)<50/9$ or 
$U(p_1,p_2)>(5+3\sqrt{33})/4$.

If $p_1\le 9$ then
$$
U(p_1,p_2)\le U(9,\infty)= 5.53208\ldots  <  50/9.
$$
If $14\le p_1\le p_2$ then
$$
U(p_1,p_2)\ge U(14,14)= 5.60387\ldots > (5+3\sqrt{33})/4.
$$
For $p_1=10,\,\ldots,\,13$ we have
\begin{eqnarray*}
U(10,25) = 5.55520\ldots <50/9 & < & (5+3\sqrt{33})/4< 5.55991\ldots =U(10,26), \\
U(11,17) = 5.54745\ldots  <50/9 & < & (5+3\sqrt{33})/4< 5.56189\ldots =U(11,18), \\
U(12,14) = 5.53398\ldots <50/9 & < & (5+3\sqrt{33})/4< 5.55914\ldots =U(12,15), \\
U(13, 13) = 5.54182\ldots <50/9 & < & (5+3\sqrt{33})/4<  5.57284\ldots = U(13,14).
\end{eqnarray*}
\end{proof}
\medskip

\section{The main results for small values of $p_1$ and $p_2$.}\label{sec-small-p1-p2}

We now turn our attention to the case when $w+w\le 50/9$. 
Note that our hypothesis implies we have
$$
w\le (w+x)/2\le  25/9.
$$
Since $1+2\cos(2\pi/13)<25/9<1+2\cos(2\pi/14)$, we are only concerned with $p_1\le 13$. 
The main result is:

\begin{prop}\label{prop-main-small}
Let $p_1$, $p_2$, $p_3$, $p_4$ be integers satisfying 
$3\le p_1\le p_2\le p_3\le p_4\le \infty$. Define
$$
    \begin{array}{ll}
    w=1+2\cos(2\pi/p_1), \quad & 
    x=1+2\cos(2\pi/p_2), \quad \\
    y=1+2\cos(2\pi/p_3), \quad & 
    z=1+2\cos(2\pi/p_4).
    \end{array}
$$
Let $\Gamma(p_1,p_2,p_3;p_4)\subset{\rm SU}(2,1)$ be the corresponding triangle group.
For given $p_1$ and $p_2$ for which $w+x\le 50/9$ there exist $p'_3$ and $p'_4$ so that 
for all $p_3\ge p'_3$ and $p_4\ge p'_4$, in $\Gamma(p_1,p_2,p_3;p_4)$ the
element $R_1R_2R_3$ is loxodromic. Moreover
\begin{enumerate}
    \item[(a)] We may take $p'_3=p'_4$ so that 
    $$
    \max\left\{2,\,x,\,\frac{9(w+x)-8}{14}\right\}\le y\le z. 
    $$
    \item[(b)] When $p_4=\infty$, so $z=3$, we may take $p'_3$ so that 
    $$
    \max\left\{1,\,x,\,\frac{3(w+x)}{2}-\frac{16}{3}\right\}\le y.
    $$
\end{enumerate}
\end{prop}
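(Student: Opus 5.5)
Our plan is to show $F(w,x,y,z)=f_1^2+4f_2^3>0$ on the stated regions, using Theorem~\ref{Goldman}~(a) and the expressions of Lemma~\ref{lem-f1-f2}. The existence of $p'_3,p'_4$ then follows from (a) and (b). Since $w+x\le 50/9$ forces $(9(w+x)-8)/14\le 3$ and $3(w+x)/2-16/3\le 3$, and since $1+2\cos(2\pi/p)\to 3$ monotonically, every $y$ with $p_3$ large enough lies in the required range. For (b) we only need $p_4=\infty$, i.e.\ $z=3$, and for (a) we take $p'_4=p'_3$. The boundary case $w+x=50/9$ needs care, but Lemma~\ref{lemma-11} shows this value is not attained, so the inequality can be taken strict when needed.

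The natural first move is to study $F$ as a function of $z$ with $w,x,y$ fixed, where $y\le z\le 3$. We will show that $\partial F/\partial z\ge 0$, or at least that $F(\cdot,z)\ge F(\cdot,y)$, on the region in part (a). Concretely, we write $F(w,x,y,z)-F(w,x,y,y)=(z-y)g_1(w,x,y,z)$ as in the proof of Lemma~\ref{lemma-1}, now in the opposite regime where $u=(w+x)/2$ is small. We then show $g_1\ge 0$ by bounding its monotonicity in $u$ and using the convexity of $g_1$ in $(y,z)$ (the Hessian computed there is positive definite). This reduces the check to the vertices of the $(y,z)$-region, namely $y=\max\{2,x,(9(w+x)-8)/14\}$ or $y=3$, and $z=y$ or $z=3$. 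After this reduction it suffices to prove $F(w,x,y,y)>0$ for $y$ in the range. Here we use the quantities $f_1$ and $f_2$ directly: on the diagonal $y=z$ we have $f_2=-wx+w+x+2y-4$, and the lower bound $y\ge (9(w+x)-8)/14$ is presumably chosen to make $f_1$ large compared with $-f_2$. We would try to prove $f_1>0$ and $f_1^2>-4f_2^3$ whenever $f_2<0$; when $f_2\ge 0$ with $f_1\ne 0$ the result is immediate. We would also use the $u,v$ variables, together with $v^2\ge 6u-9$ from \eqref{eq-u-v-ineq} and $v\le u\le 25/9$, to reduce to one or two variables along the boundary $y=$ lower bound. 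There a factorisation analogous to those in Lemma~\ref{lemma-8} should appear, with equality exactly at the extremal points $(w,x)=(23/9,3)$, corresponding to the threshold $50/9$.

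For part (b) we set $z=3$ and study $G(y)=F(w,x,y,3)$ for $y\ge\max\{1,x,3u-16/3\}$. Since $\partial^2 F/\partial y^2$ was shown in Lemma~\ref{lemma-2}~(c) to be positive in this form, $G$ is convex. So we will show $G$ is increasing at the left endpoint, or positive there with nonnegative derivative. Both checks are polynomial inequalities in $(w,x)$, to be established on the region $w\le x\le 3$, $w+x\le 50/9$ by the same tactic as before: convexity in $u$ for fixed $w$, followed by evaluation on boundary curves.

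We expect the main obstacle to be the positivity on the lower-boundary surface $y=(9(w+x)-8)/14$ (respectively $y=3u-16/3$). There $F$ is a degree-six polynomial in two variables that vanishes at the corner $(23/9,3)$, so crude bounds will fail near that point. We would first try to factor out $(50-9(w+x))$ or $(3-x)$ and show that the cofactor has a sign via a convexity argument in $u$ with $w$ fixed, splitting the range of $w$ at $23/9$ and at $2$ if necessary. The constraints $\max\{2,\cdot\}$ and $\max\{1,\cdot\}$ mark where this fails for small $w$, and those pieces we would handle by separate, easier estimates.
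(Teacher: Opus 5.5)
The proposal has a genuine gap: the reductions in part (a) use convexity in the wrong direction, and the monotonicity idea that replaces them is missing.

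\textbf{The convexity step.} You want $g_1\ge 0$, which is a \emph{lower} bound. Convexity of $g_1$ in $(y,z)$ (the positive definite Hessian from the proof of Lemma~\ref{lemma-1}) only tells you that the \emph{maximum} over the polygon is attained at a vertex. A convex function can be nonnegative at every vertex and negative inside. The paper can use this in Lemma~\ref{lemma-1} precisely because there the goal is $g_1\le 0$. Also, the sign of $\partial g_1/\partial u$ in that lemma used $v^2\ge 13/2$, which fails for many $(w,x)$ with $w+x\le 50/9$ (for instance $w=x=2$).

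The same objection applies to your plan for the two-variable inequalities in $(w,x)$, namely ``convexity in $u$ for fixed $w$, then evaluation on boundary curves''. For positivity you need concavity, as in Lemma~\ref{lem-f1-loxo-cond}, or a monotonicity argument.

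\textbf{The diagonal step.} Even granting $F(w,x,y,z)\ge F(w,x,y,y)$, you never justify replacing the whole diagonal range $y'\le y\le 3$ by its endpoint $y=y'$. Your ``reduce to the boundary $y=$ lower bound'' step needs exactly this.

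\textbf{The missing idea: Lemma~\ref{loxo condition}.} $f_2$ is linear with $\partial f_2/\partial y=\partial f_2/\partial z=1$. $f_1$ is bilinear in $(y,z)$ with $\partial f_1/\partial y=w+x+z-6$ and $\partial f_1/\partial z=w+x+y-6$, both $\ge 0$ automatically once $2\le w\le x\le y\le z$. Hence wherever $f_1\ge 0$,
\[
\partial F/\partial y=2f_1\,\partial f_1/\partial y+12f_2^2\ge 0,
\]
and likewise for $z$; moreover $f_1$ only increases as $y,z$ increase. So $f_1\ge 0$ and $F\ge 0$ at the single corner $(y',z')$ give $F\ge 0$ on the whole quadrant $y\ge y'$, $z\ge z'$.

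This removes $g_1$ entirely. Part (a) reduces to checking $f_1$ and $F$ at $y=z=\max\{x,(9(w+x)-8)/14\}$. Part (b) reduces to checking them at $(y,z)=(\max\{x,3(w+x)/2-16/3\},3)$. These are genuinely two-variable problems, which the paper settles by explicit substitutions and monotonicity in one variable, not by boundary evaluation of a convex function.

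\textbf{What does work.} Your part (b) scheme is a legitimate variant of this: convexity of $G(y)=F(w,x,y,3)$ together with $G(y_0)\ge 0$ and $G'(y_0)\ge 0$. Indeed $G'(y_0)=2f_1\,(w+x-3)+12f_2^2$, so the paper's condition $f_1\ge 0$ implies yours. Part (a) as proposed does not go through.

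The small-$w$ pieces need no separate estimates to be found. For $w\le 2$ one simply has $f_2\ge 0$ whenever $y+z\ge 4$, and this is where the constants $2$ and $1$ in the two maxima come from.
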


In order to prove Proposition~\ref{prop-main-small} we use two techniques, both of which follow from the definition of $F=f_1^2+4f_3^3$. First, it is clear that 
if $f_2>0$ then $F>0$. Secondly, we show that if there are parameters for which both $f_1>0$ and $F>0$ then $F$ is an increasing function of $y$ and $z$. 

\begin{lem}\label{lem-in-SU21}
    Using the notation of Proposition~\ref{prop-main-small}, if $\Gamma(p_1,p_2,p_3;p_4)$
    is a subgroup of ${\rm SU}(2,1)$ then $y+z>2$. 
\end{lem}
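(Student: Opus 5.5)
The natural route is Lemma~\ref{lem-H-sig}, which gives the signature condition directly in terms of the four traces. If $\Gamma(p_1,p_2,p_3;p_4)$ lies in ${\rm SU}(2,1)$, then the Hermitian form $H$ has signature $(2,1)$, and Lemma~\ref{lem-H-sig} yields
$$
-wx+w+x+y+z-4>-1,\qquad\hbox{that is}\qquad y+z>wx-w-x+3.
$$
It therefore suffices to show that $wx-w-x+3\ge 2$, which is the same as $wx-w-x+1=(w-1)(x-1)\ge 0$.

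This last inequality follows from the standing hypotheses. We have $3\le p_1\le p_2$, and if $p_1\ge 4$ then $w=1+2\cos(2\pi/p_1)\ge 1$. Since $x\ge w$, both factors $w-1$ and $x-1$ are then non-negative, so their product is non-negative. This gives $y+z>2$.

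The only delicate point is the case $p_1=3$, where $w=0$ and hence $w-1=-1$. Here $(w-1)(x-1)=1-x$, which is negative as soon as $x>1$. So the factorisation does not settle this case, and it must be checked separately.

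For $p_1=3$ the signature condition reads $y+z>x+2$. Since $y\ge x\ge 0$, this already gives $y+z>x+2\ge 2$, which is the claim. More generally, I would write the condition as
$$
y+z>2+(x-1)(w-1)+\bigl(1-(w-1)(x-1)\bigr)-1,
$$
and observe that $(w-1)(x-1)\ge -1$ whenever $w,x\in[0,3]$ with $w\le x$. Indeed, if $w\ge1$ the product is non-negative, and if $w<1$ then $(1-w)(x-1)\le (1-w)\cdot 2$ needs care. To handle this last situation cleanly, I would split into the cases $w\ge 1$ and $w<1$, the latter forcing $p_1=3$ or $4$. In the case $w<1$, I would use $y\ge x$ and the signature inequality directly, treating $p_1=4$ (where $w=1$) as trivial and $p_1=3$ as above. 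Getting this case split right is the main obstacle.
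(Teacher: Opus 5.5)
Your case $p_1\ge 4$ is fine. Lemma~\ref{lem-H-sig} gives $y+z>2+(w-1)(x-1)$, and $1\le w\le x$ makes the product non-negative.

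The gap is in the case $p_1=3$. There $w=0$, so $2+(w-1)(x-1)=2+(1-x)=3-x$. The signature condition therefore reads $y+z>3-x$, not $y+z>x+2$. Your stated inequality is in fact false for groups satisfying the hypothesis. For $\Gamma(3,4,4;5)$ we have $x=y=1$ and $z=(1+\sqrt5)/2$, so $f_2=z-2>-1$ and the form has signature $(2,1)$. Yet $y+z\approx 2.618<3=x+2$. The correct inequality $y+z>3-x$ only yields $y+z>2$ when $x\le 1$, so your argument does not cover the subcase $x>1$ (that is, $p_2\ge 5$).

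Your final paragraph does not repair this:
\begin{itemize}
\item the displayed ``rewriting'' simplifies to $y+z>2$, which is the conclusion rather than the hypothesis;
\item the bound $(w-1)(x-1)\ge -1$ fails for $w=0$, $x>2$;
\item even if that bound held, it would only give $y+z>1$.
\end{itemize}

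The missing observation is that when $x>1$ you need no signature information at all. The ordering $z\ge y\ge x>1$ gives $y+z>2$ directly. This is how the paper argues: it splits on $x$ rather than on $w$. If $x>1$, the ordering suffices. If $x\le 1$, then $w\le x\le 1$, so $(1-w)(x-1)\le 0$, and $0<f_2+1=(1-w)(x-1)+y+z-2$ gives $y+z>2$. With this split, the case $p_1=3$ needs no separate treatment.
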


\begin{proof}
Using Lemma~\ref{lem-H-sig} that if $\Gamma$ lies in ${\rm SU}(2,1)$ then $f_2(w,x,w,z)>-1$.
That is,
\begin{eqnarray*}
    0 & < & f_2(w,x,y,z)+1 \\
    & = & (1-w)(x-1)+y+z-2.
\end{eqnarray*}
    If $x>1$ then $z\ge y\ge x>1$ and $y+z>2$. If $x\le 1$ then $w\le x\le 1$ and 
    $(1-w)(x-1)\le 0$. Hence $y+z>2$. 
\end{proof}

\medskip

We begin with the case where $p_1\le 6$, which is equivalent to $w\le 2$.

\begin{lem}\label{lem w<2}
    Suppose that $w\le 2$. If $y+z\ge 4$ then $F(w,x,y,z)\ge 0$. In particular, 
    if $2\le y\le z$ then $F(w,x,y,z)\ge 0$ and if $y\ge 1$ then $F(w,x,y,3)\ge 0$.
\end{lem}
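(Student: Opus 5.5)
The plan is to avoid the quartic $f_1$ entirely and use only the cubic term. Since $F=f_1^2+4f_2^3$ by Theorem~\ref{Goldman}, it is enough to show that $f_2(w,x,y,z)\ge 0$ under the hypotheses $w\le 2$ and $y+z\ge 4$. Throughout I use the standing assumptions $0\le w\le x\le y\le z\le 3$. From Lemma~\ref{lem-f1-f2} I will regroup $f_2$ as
$$
f_2(w,x,y,z)=-wx+w+x+y+z-4=-(w-1)(x-1)+(y+z-3),
$$
so the task reduces to bounding the product $(w-1)(x-1)$ by $y+z-3$.

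I will split into cases according to the sign of $w-1$.
\begin{itemize}
\item If $w\le 1$ and $x\ge 1$, then $(w-1)(x-1)\le 0$. Hence $f_2\ge y+z-3\ge 1>0$.
\item If $w\le x<1$, then $(w-1)(x-1)=(1-w)(1-x)\le 1$, because $0\le w,x$. Hence $f_2\ge y+z-4\ge 0$.
\item If $1<w\le 2$, then $0<w-1\le 1$ and $x\ge w>1$, so $(w-1)(x-1)\le x-1$. Hence $f_2\ge y+z-x-2$. Using $y,z\ge x$ gives $y+z-x\ge x$, and using $y+z\ge 4$ gives $y+z-x\ge 4-x$. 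So $y+z-x\ge\max\{x,4-x\}\ge 2$, and therefore $f_2\ge 0$.
\end{itemize}
In every case $f_2\ge 0$, and so $F=f_1^2+4f_2^3\ge 0$.

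The two ``in particular'' statements follow directly from the main claim. If $2\le y\le z$ then $y+z\ge 4$. If $y\ge 1$ and $z=3$ then $y+z\ge 4$ again.

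The only step needing care is the third case, $1<w\le 2$. There the crude bound $(w-1)(x-1)\le 2$ is too weak on its own. One must combine $x\le y\le z$ with $y+z\ge 4$ to get $y+z-x\ge 2$, which is exactly where the ordering of the traces is used. I do not expect any genuine obstacle beyond this, since the argument never needs to analyse the quartic $f_1$.
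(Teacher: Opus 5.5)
Your proof is correct and follows the paper's strategy: show $f_2\ge 0$ and conclude $F=f_1^2+4f_2^3\ge 0$. The only difference is the case split. The paper splits on whether $x\ge 2$, writing $f_2=(2-w)(x-2)+(y-w)+(z-x)$ (which does not even need $y+z\ge 4$), or $x\le 2$, writing $f_2=w(2-x)+(x-w)+y+z-4$; you instead split on the positions of $w$ and $x$ relative to $1$ via $f_2=-(w-1)(x-1)+(y+z-3)$.
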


\begin{proof}
We consider
$$
f_2(w,x,y,z)=-wx+w+x+y+z-4.
$$
Since $F=f_1^2+4f_2^3$, if $f_2\ge 0$ then $F\ge 0$. Hence it suffices to prove $f_2(w,x,y,z)\ge 0$.

First suppose that $w\le 2\le x\le y\le z$.
$$
f_2(w,x,y,z)=(2-w)(x-2)+(y-w)+(z-x)\ge 0
$$
with equality if and only if $w=x=y=z=2$.

Now suppose that $w\le x\le 2$. Then
$$
f_2(w,x,y,z)=w(2-x)+(x-w)+y+z-4
\ge y+z-4.
$$
Thus, if $y+z\ge 4$ then $f_2(w,x,y,z)\ge 0$.
\end{proof}

\medskip

This has proved Proposition~\ref{prop-main-small} in the case
where $p_1\le 6$. We now consider $p_1\ge 6$, which implies $w\ge 2$. In this case we will use the following lemma to determine when $R_1R_2R_3$ is loxodromic or parabolic in $\Gamma(p_1,p_2,p_3;p_4)$, that is when $F(w,x,y,z)> 0$.

\begin{lem}\label{loxo condition}
For given $w\le x$ suppose there exist $y'$, $z'$ with $x\le y'\le z'$ and also $w+x+y'\ge 6$ 
(in particular this holds if $w\ge 2$) such that
$$
f_1(w, x, y', z')\ge 0 \qquad \text{and} \qquad
F(w, x, y', z')\ge 0.
$$
Then $F(w, x, y, z)\ge 0$ for all $y\ge y'$ and $z\ge z'$. 
\end{lem}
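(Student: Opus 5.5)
The proof is a monotonicity argument in $y$ and $z$, using only the explicit forms of $f_1$ and $f_2$ from Lemma~\ref{lem-f1-f2} and the decomposition $F=f_1^2+4f_2^3$. We will show that, moving from $(y',z')$ to any $(y,z)$ with $y\ge y'$ and $z\ge z'$, the quantity $f_1$ does not decrease and $f_2$ strictly increases. Then the inequality $f_1\ge 2(-f_2)^{3/2}$, which is what $F\ge0$ and $f_1\ge0$ say when $f_2<0$, is preserved.

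\textbf{Step 1 (monotonicity of $f_2$).} By Lemma~\ref{lem-f1-f2},
$$
f_2(w,x,y,z)=-wx+w+x+y+z-4,
$$
so $\partial f_2/\partial y=\partial f_2/\partial z=1$. Hence $f_2(w,x,y,z)\ge f_2(w,x,y',z')$ whenever $y\ge y'$ and $z\ge z'$.

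\textbf{Step 2 (monotonicity of $f_1$).} In $f_1$ only the product $(w+x+y-6)(w+x+z-6)$ depends on $y$ and $z$. Therefore
$$
\frac{\partial f_1}{\partial y}=w+x+z-6,\qquad \frac{\partial f_1}{\partial z}=w+x+y-6 .
$$
On the region $y\ge y'$, $z\ge z'\ge y'$ both partial derivatives are at least $w+x+y'-6\ge0$ by hypothesis. The region is a product of half-lines, so moving first in $y$ and then in $z$ (or integrating along the segment joining the two points) gives $f_1(w,x,y,z)\ge f_1(w,x,y',z')\ge0$.

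For the parenthetical claim: if $w\ge2$, then $x\ge w\ge2$ and $y'\ge x\ge2$, so $w+x+y'\ge6$.

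\textbf{Step 3 (conclusion).} Fix $(y,z)$ in the region.
\begin{itemize}
\item If $f_2(w,x,y,z)\ge0$, then $F=f_1^2+4f_2^3\ge0$ immediately.
\item Otherwise $f_2(w,x,y',z')\le f_2(w,x,y,z)<0$ by Step~1, so
$$
0<-f_2(w,x,y,z)\le -f_2(w,x,y',z').
$$
From $F(w,x,y',z')\ge0$ and $f_1(w,x,y',z')\ge0$ we get
$$
f_1(w,x,y',z')\ge 2\bigl(-f_2(w,x,y',z')\bigr)^{3/2}.
$$
Combining this with Step~2 gives
$$
f_1(w,x,y,z)\ge f_1(w,x,y',z')\ge 2\bigl(-f_2(w,x,y',z')\bigr)^{3/2}\ge 2\bigl(-f_2(w,x,y,z)\bigr)^{3/2}\ge 0 .
$$
Squaring yields $f_1^2\ge -4f_2^3$ at $(w,x,y,z)$, that is, $F(w,x,y,z)\ge0$.
\end{itemize}

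The only point needing care is Step~2. The sign of $\partial f_1/\partial y$ depends on $z$, not on $y$, so we need the ordering $z\ge z'\ge y'$ to bound it below by $w+x+y'-6$. This is exactly why the hypothesis is stated with $y'\le z'$ and $w+x+y'\ge6$. Everything else is elementary.
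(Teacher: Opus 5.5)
Your proof is correct and rests on the same facts as the paper's: $f_1$ and $f_2$ are affine in each of $y$ and $z$, with $\partial f_2/\partial y=\partial f_2/\partial z=1$, and $\partial f_1/\partial y=w+x+z-6$, $\partial f_1/\partial z=w+x+y-6$ are non-negative on the quadrant $y\ge y'$, $z\ge z'$. The only difference is in the final step: the paper concludes from $\partial F/\partial y=2f_1\,\partial f_1/\partial y+12f_2^2\,\partial f_2/\partial y\ge 0$ (and likewise in $z$) once $f_1\ge 0$, whereas you compare values directly via $f_1\ge 2(-f_2)^{3/2}$. Your version also makes explicit two points the paper leaves tacit: that $f_1$ stays non-negative on the whole quadrant, and that $z\ge z'\ge y'$ is what controls the sign of $\partial f_1/\partial y$.
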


\begin{proof}
By definition, $f_1(w,x,y,z)$ and $f_2(w,x,y,z)$ are linear functions of $y$ and $z$. Moreover,
\begin{eqnarray*}
\frac{\partial f_1}{\partial y}
=z+w+x-6,& \quad &
\frac{\partial f_2}{\partial y}
=1, \\
\frac{\partial f_1}{\partial z}
=y+w+x-6,& \quad &
\frac{\partial f_2}{\partial z}
=1.
\end{eqnarray*}
When $6\le w+x+y\le w+x+z$ 
these partial derivatives are all non-negative. Using Theorem~\ref{Goldman} we see
$$
F=f_1^2+4f_2^3,
$$
and so 
\begin{eqnarray*}
\frac{\partial F}{\partial y}
& = & 2f_1\,\frac{\partial f_1}{\partial y} 
+12f_2^2\,\frac{\partial f_2}{\partial y}, \\
\frac{\partial F}{\partial z}
& = & 2f_1\,\frac{\partial f_1}{\partial z} 
+12f_2^2\,\frac{\partial f_2}{\partial z}.
\end{eqnarray*}
Hence, if $f_1(w,x,y,z)\ge 0$ then $F(w,x,y,z)$ is an increasing function of both $y$ and $z$.
\end{proof}

\medskip

We can now prove Proposition~\ref{prop-main-small}~(a) in the case $w\ge 2$.

\begin{prop}\label{prop-y=z}
Suppose we are given $w$ and $x$ with 
$$
2\le w\le x\le 3,\quad w+x\le 50/9.
$$
Then for all 
$$
\max\left\{x,\,\frac{9(w+x)-8}{14}\right\}\le y\le z.
$$
we have $F(w,x,y,z)\ge 0$ with equality if and only if $(w,x,y,z)=(2,2,2,2)$ or 
$(23/9,3,3,3)$.
\end{prop}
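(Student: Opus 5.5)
The plan is to use Lemma~\ref{loxo condition} with the diagonal starting point $y'=z'=Y$, where
$$
Y=\max\left\{x,\,\frac{9(w+x)-8}{14}\right\}.
$$
Since $w\ge 2$ we have $w+x+Y\ge 6$. So it suffices to prove two inequalities:
$$
f_1(w,x,Y,Y)\ge 0 \qquad\text{and}\qquad F(w,x,Y,Y)\ge 0.
$$
Monotonicity in $y$ and in $z$ then gives $F(w,x,y,z)\ge 0$ for all $Y\le y\le z$.

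For the equality cases I would track where the monotonicity is strict. When $f_1>0$ and $w+x+y>6$, the partial derivatives computed in Lemma~\ref{loxo condition} are strictly positive. Hence equality can only occur at $y=z=Y$, and only when $F(w,x,Y,Y)=0$.

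I would change variables to $u=(w+x)/2$ and $v=\sqrt{wx}$, as in Section~\ref{sec-large-p1-p2}. The constraints become $2\le v\le u\le 25/9$ and $u\le 3/2+v^2/6$. The threshold is $(9(w+x)-8)/14=(18u-8)/14$. Since $x=u+\sqrt{u^2-v^2}$, the maximum defining $Y$ switches between two branches, so I split into two cases.

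\emph{Case 1: $Y=x$.} Here I would compute $f_1(w,x,x,x)$ and $F(w,x,x,x)$ directly. From Lemma~\ref{lem-f1-f2},
$$
f_1=(w+2x-6)^2+6(w+x)-(w+x)^2+wx(7-w-x)-20,
$$
and $F(w,x,x,x)$ can be written in factored form analogous to the computation in Lemma~\ref{lemma-8}(a), namely
$$
F = 4(3-w)(u-w)^2\bigl((23-9w)-(3-w)(9-x^2)\bigr)
$$
with $x=2u-w$. In the region where $x\ge (9(w+x)-8)/14$, i.e.\ $5x\ge 9w-8$, I would check that the bracket is non-negative. Its sign is governed by $x^2$ against $9-(23-9w)/(3-w)$, and I expect this to follow from that constraint. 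Equality should occur only for $w=x$, combined with the bracket vanishing, which pins down the points $(2,2,2,2)$ and $(23/9,3,3,3)$.

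\emph{Case 2: $Y=(18u-8)/14>x$.} Substitute $y=z=Y$. Then $\tilde F(u,v,Y,Y)$ and $\tilde f_1$ are polynomials in $u$ and $v^2$ alone. I would show positivity by the convexity and monotonicity arguments used in Lemmas~\ref{lemma-1}--\ref{lemma-8}. Concretely, I would fix $u$, view the expressions as polynomials in $v^2$, establish monotonicity in $v^2$, and reduce to the boundary curves $v=u$ (i.e.\ $w=x$), $v^2=6u-9$ (i.e.\ $x=3$), and $x=Y$. On each curve the resulting one-variable polynomials should factor, with roots at the endpoints $u=2$ and $u=25/9$, in the same way as the displayed factorizations in Lemma~\ref{lemma-8}.

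The main obstacle is Case~2. One has to verify that $\tilde f_1(u,v,Y,Y)\ge0$ and $\tilde F(u,v,Y,Y)\ge 0$ on the full two-dimensional region, not just on its boundary, and a monotonicity or convexity argument in $v^2$ must hold uniformly there. If monotonicity in $v^2$ fails, I would instead compute the Hessian, or split the range of $u$ into subintervals and use crude bounds on each, as was done in the proof of Lemma~\ref{lemma-2}.
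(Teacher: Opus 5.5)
Your proposal follows essentially the same route as the paper. Case~1 is exactly the factorization $F(w,x,x,x)=(3-w)(x-w)^2\bigl((3-w)x^2-4\bigr)$, with the bracket bounded below using $x\ge(9w-8)/5$ (which forces $w\le 23/9$). In Case~2 the paper's coordinates $w=2+7a-2\sqrt{b}$, $x=2+7a+2\sqrt{b}$ are just $a=(u-2)/7$, $b=(u^2-v^2)/4$. In them $Y=2+9a$, the region becomes $0\le b\le a^2$, $0\le a\le 1/9$, and $F$ is shown (by convexity in $b$ and the sign of $\partial F/\partial b$ at $b=a^2$) to be decreasing in $b$, i.e.\ increasing in $v^2$ as you anticipated. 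Hence only the boundary $x=Y$ matters, where $F=16a^3(1-9a)(1-5a)(45a+16)$.

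Two small corrections. In Case~1 the factorization vanishes when $w=x$ \emph{or} when the bracket vanishes, not only when both hold; the latter alone gives $(23/9,3)$, where $w\neq x$. Also, $x=3$ is not a boundary of the Case~2 region apart from the corner $(23/9,3)$, whereas $u=25/9$ is.
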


\begin{proof}
We will use Lemma~\ref{loxo condition} with 
$$
y'=z'=\max\left\{x,\,\frac{9(w+x)-8}{14}\right\}.
$$
We observe that $\bigl(9(w+x)-8\bigr)/14= x$ if and only if 
$$
9w-5x-8 = 0.
$$

First consider the case where $\bigl(9(w+x)-8\bigr)/14\le x$, that is when 
$$
9w-5x-8\le 0.
$$
Since $x\le 3$ we have $w\le 23/9$. In this case, we consider
$$
F(w,x,x,x)=(3-w)(x-w)^2\bigl((3-w)x^2-4\bigr).
$$
When $2\le w\le 23/9$ and $x\ge (9w-8)/5$ we have
\begin{eqnarray*}
    (3-w)x^2 -4 & \ge & \frac{(3-w)(9w-8)^2}{25}-4 \\
    & = & \frac{(w-2)(23-9w)(9w-2)}{25}\ge 0
\end{eqnarray*}
with equality if and only if $w=2$ or $w=23/9$.

Now we consider the case where $\bigl(9(w+x)-8\bigr)/14\ge x$, that is 
$$
9w-5x-8\ge 0.
$$
Therefore, we want to show that when 
$$
w\le x\le \min\bigl \{(9w-8)/5, \,50/9-w\bigr\}
$$
and $y=z=\bigl(9(w+x)-8\bigr)/14$ then $F(w,x,y,z)\ge 0$. 

To simplify the calculations, we write $w=2+7a-2\sqrt{b}$ and $x=2+7a+2\sqrt{b}$. On the line $w=x$ we have $b=0$ and on the line $9w-5x-8=0$
we have $b=a^2$. On the line $w+x=50/9$ we have $a=1/9$. Therefore when 
$$
w\le x\le \min\bigl \{(9w-8)/5, \,50/9-w\bigr\}
$$
we have $0\le a\le 1/9$ and $0\le b\le a^2$. 
Also, 
$$
\frac{9(w+x)-8}{14}=2+9a.
$$

First consider
$$
f_1(2+7a-2\sqrt{b},2+7a+2\sqrt{b},2+9a,2+9a) 
= a^2(76-630a)+(a^2-b)(12-56a).
$$
This is clearly non-negative when $0\le a\le 1/9$ and $0\le b\le a^2$ and is zero
if and only if $a=b=0$. 

Now, define
\begin{eqnarray*}
h(a,b) & = & F(2+7a-2\sqrt{b},2+7a+2\sqrt{b},2+9a,2+9a) \\
& = & 256b^3 - 6272a^2b^2 - 576ab^2 + 144b^2 \\
&& \quad + 38416a^4b + 7504a^3b - 1344a^2b - 5488a^5 - 1664a^4 + 256a^3.
\end{eqnarray*}
We calculate
$$
\frac{\partial^2 h(a,b)}{\partial b^2} = 32(48b-392a^2-36a+9).
$$
Since the quadratic polynomial $-392a^2-36a+9$ is positive for $0\le a\le 1/9$, we see that $\partial^2 h(a,b)/\partial b^2>0$ and so $\partial(h(a,b)/\partial b$ is an increasing function. We calculate
$$
\frac{\partial h(a,b)}{\partial b}\Big\vert_{b=a^2} = 16a^2(1665a^2+397a-66).
$$
This is negative for all $0\le a\le 1/9$ and so $h(a,b)$ is a decreasing function of $b$.
Therefore for $0\le a\le 1/9$ and $0\le b\le a^2$ we have 
$$
h(a,b)\ge h(a,a^2) =16a^3(1-9a)(1-5a)(45a + 16)\ge 0,
$$
with equality if and only if $a=0$ or $a=1/9$.

Thus we have shown that when $2\le w\le x$, $w+x\le 50/9$ and $9w-5x-8\ge 0$ we have
\begin{eqnarray*}
    f_1\left(w,x,\frac{9(w+x)-8}{14}, \frac{9(w+x)-8}{14}\right) & \ge & 0, \\
    F\left(w,x,\frac{9(w+x)-8}{14}, \frac{9(w+x)-8}{14}\right) & \ge & 0.
\end{eqnarray*}
The result follows from Lemma~\ref{loxo condition}.
\end{proof}

\medskip

This has proved Proposition~\ref{prop-main-small}~(a) in the case where $w\ge 2$. 
We now turn our attention to finding proving Proposition~\ref{prop-main-small}~(b) in the case
where $w\ge 2$. That is, we find a bound on $y'$ when $z=3$. Again we use Lemma~\ref{loxo condition}. We will show that we can take
$$
y'=\max\left\{x,\,\frac{3(w+x)}{2}-\frac{16}{3}\right\}.
$$
Observe that these two quantities are equal when $3w+x=32/3$. 

\begin{prop}\label{prop-z=3}
Suppose we are given $w$ and $x$ with
$$
2\le w\le x\le 3,\quad w+x\le 50/9.
$$
Then for all
$$
\max\left\{x,\,\frac{3(w+x)}{2}-\frac{16}{3}\right\}\le y
$$
we have $F(w,x,y,3)\ge 0$ with equality if and only if $(w,x,y)=(8/3,8/3,8/3)$ or $(23/9,3,3)$.    
\end{prop}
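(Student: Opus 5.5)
We follow the proof of Proposition~\ref{prop-y=z}, using Lemma~\ref{loxo condition} with $z'=3$ and
$$
y'=\max\left\{x,\,\frac{3(w+x)}{2}-\frac{16}{3}\right\}.
$$
Since $w\ge 2$ and $y'\ge x\ge w$, we have $w+x+y'\ge 6$, so the lemma applies. It therefore suffices to show $f_1(w,x,y',3)\ge 0$ and $F(w,x,y',3)\ge 0$ on the region $2\le w\le x\le 3$, $w+x\le 50/9$. We then identify where equality can hold. Note that $f_1$ and $F$ are increasing in $z$ once $f_1\ge 0$, so taking $z'=3$ loses nothing. We split the region along the line $3w+x=32/3$, where the two candidates for $y'$ coincide.

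\textbf{Case $3w+x\le 32/3$, so $y'=x$.} Here we examine $F(w,x,x,3)$ directly. In the variables $u=(w+x)/2$, $x=2u-w$, this is exactly the quartic in $u$ written out in the proof of Lemma~\ref{lem-7.5}. I would factor out the obvious zeros.
\begin{itemize}
\item On the diagonal $u=w$, the value is $-w(3-w)^2(3w-8)$, which is $\ge 0$ for $w\le 8/3$ and vanishes at $w=8/3$.
\item At $x=3$, the value is $-(3-w)^3(9w-23)$, which is $\ge 0$ for $w\le 23/9$ and vanishes at $w=23/9$.
\end{itemize}
These boundary values explain the equality cases $(8/3,8/3,8/3)$ and $(23/9,3,3)$. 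In the interior I would show $F(w,x,x,3)\ge 0$ by a monotonicity or convexity argument in $x$ for fixed $w$, or by writing $F$ as a sum of manifestly non-negative terms. We must also check $f_1(w,x,x,3)\ge0$; since $f_1$ is low degree, this should follow from a direct estimate using $w\ge2$.

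\textbf{Case $3w+x\ge 32/3$, so $y'=3u-16/3$.} This case is a thin wedge bounded by $x\ge w$, $3w+x\ge 32/3$ and $w+x\le 50/9$. Its corners are $(8/3,8/3)$, $(23/9,3)$ and the point where $x=w$ meets $w+x=50/9$, namely $(25/9,25/9)$. As in Proposition~\ref{prop-y=z}, I would reparametrise by a symmetric variable $a$ (a shift of $w+x$) and an asymmetric variable $b=((x-w)/2)^2$, chosen so that the boundary lines become $b=0$ and $b=$ a simple quadratic in $a$, with $a$ ranging over a short interval.
\begin{itemize}
\item First verify $f_1\ge 0$ there, which should be a short explicit computation.
\item Then set $h(a,b)=F$ evaluated at $y=y'$, $z=3$. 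Show $\partial^2 h/\partial b^2$ has a fixed sign and that $\partial h/\partial b$ has a sign at one endpoint, so $h$ is monotone in $b$. This reduces the problem to a one-variable polynomial on the boundary, which should factor with roots at the corners, giving $h\ge 0$.
\end{itemize}

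The main obstacle is this second case. Its feasibility depends on finding a change of variables in which the boundary of the wedge is simple and $h$ factors cleanly. I would first try $w=c+\alpha a-\sqrt b$, $x=c+\alpha a+\sqrt b$, with constants chosen so that $y'=c'+\beta a$. If monotonicity in $b$ fails, I would fall back on convexity and check the three edges separately.
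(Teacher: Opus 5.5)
Your architecture is the paper's: Lemma~\ref{loxo condition} with $z'=3$ and $y'=\max\{x,\,3(w+x)/2-16/3\}$, split along $3w+x=32/3$. Your plan for the wedge $3w+x\ge 32/3$ is exactly Lemma~\ref{lem-14}, and it works as described. With $a=9(w+x)/2-24$ and $b=9(x-w)/4$ the wedge becomes $0\le b\le a\le 1$, and $F(w,x,y',3)$ is even in $b$. As a cubic in $B=b^2$ it has $\partial^2/\partial B^2>0$ and $\partial/\partial B<0$ at $B=a^2$, so it is decreasing in $B$. On the edge $b=a$ one gets $531441\,F=9a(1-a)(648+549a-741a^2+136a^3-16a^4)\ge 0$. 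So this case is not the obstacle.

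The genuine gap is the other case: proving $F(w,x,x,3)\ge 0$ on the interior of $\{2\le w\le x\le 3,\ 3w+x\le 32/3\}$, which is the real content (Lemma~\ref{lem-13}). Knowing $F\ge 0$ on the diagonal and on $x=3$ does not control the interior of a quartic in $u$, and both concrete tools you offer fail.

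\begin{itemize}
\item Convexity in $x$ fails already on the edge $w=2$. There $f_1=(x-2)(4-x)$ and $f_2=1$, so $F(2,x,x,3)=(x-2)^2(4-x)^2+4$, which is concave for $3-1/\sqrt{3}<x\le 3$.
\item Monotonicity in $x$ has no uniform direction. Along $w=2$, $F$ increases from $4$ to $5$. Along $w=23/9$ it starts at $368/2187$ (at $x=w$) and ends at $0$ (at $x=3$).
\end{itemize}

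You also omit the edges $w=2$ and $3w+x=32/3$ from your boundary list. Both are harmless, but any boundary-reduction argument would need them.

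What is missing is an actual positivity certificate. The paper splits at $w=23/9$ into two pieces:
\begin{itemize}
\item the strip $2\le w\le 23/9$, $w\le u\le (w+3)/2$, parametrised by $w=3-a$, $u=w+ab/2$;
\item the triangle $23/9\le w\le 8/3$, $w\le u\le 16/3-w$, parametrised by $w=(24-a)/9$, $u=w+2ab/9$.
\end{itemize}
In each piece the $u$-interval is mapped affinely onto $b\in[0,1]$. After removing a power of $a$, $F$ is written in the quartic Bernstein basis $b^k(1-b)^{4-k}$, with coefficients that are polynomials in $a$ and non-negative on the relevant $a$-interval. The equality cases $(8/3,8/3)$ and $(23/9,3)$ come out of this expansion. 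Your phrase ``a sum of manifestly non-negative terms'' names this idea without realising it, so as written this case is unproved.

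By comparison, your unproved claim $f_1(w,x,y',3)\ge 0$ is easy to fill. For $w\ge 2$ both $f_1(w,2u-w,2u-w,3)$ and $f_1(w,2u-w,3u-16/3,3)$ are concave in $u$, with leading coefficients $4(1-w)$ and $6-4w$. So checking the boundary arcs suffices, as in Lemma~\ref{lem-f1-loxo-cond}.
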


We will prove this using Lemma~\ref{loxo condition} and we break the proof down into three lemmas.

\begin{lem}\label{lem-f1-loxo-cond}
Suppose that we are given $w$ and $x$ so that
$$
2\le w\le x\le 3,\quad w+x\le 50/9.
$$
If 
$$
y'=\max\left\{x,\,\frac{3(w+x)}{2}-\frac{16}{3}\right\}
$$ 
then $f_1(w,x,y',3)\ge 0$ with equality if and only if $w=x=y'=2$. 
\end{lem}

\begin{proof}
Setting $u=(w+x)/2$, we have $x=2u-w$.
Note that
\[
\max\bigl\{x,\,3(w+x)/2-16/3\bigr\}
=\max\bigl\{2u-w,\,3u-16/3\bigr\}
=2u-w,
\]
when $u\le 16/3-w$, and
\[
\max\bigl\{2u-w,\,3u-16/3\bigr\}
=3u-16/3,
\]
when $u\ge 16/3-w$.

For each case, we have
\begin{eqnarray*}
 f_1(w,2u-w,2u-w,3)
 &=&
 4(1-w)u^2+2(w^2+6w-6)u-7w^2+3w-2,\\
 f_1(w,2u-w,3u-16/3,3)
 &=&
 (6-4w)u^2+(2w^2+14w-77/3)u-7w^2+14.
\end{eqnarray*}
When $w\ge 2$, both expressions are clearly concave functions of $u$. Thus we only need to show $f_1$ is positive on the boundaries of the regions in the statement.

Therefore, the boundary of the region is made up of the following arcs, 
and we show on each one that $f_1\ge 0$.
\begin{enumerate}
\item[(a)] $w=2$, $2\le x\le 3$, $y'=x$:
$$
f_1(2,x,x,3) = (x-2)(4-x)\ge 0.
$$
\item[(b)] $2\le w\le 23/9$, $x=3$, $y'=3$:
$$
f_1(w,3,3,3)=-3w^2+12w-11>0.
$$
\item[(c)] $2\le w\le 8/3$, $x=w$, $y'=w$:
$$
f_1(w,w,w,3)=-(w-2)(2w^2-5w-1)\ge 0.
$$
\item[(d)] $8/3\le w\le 25/9$, $x=w$, $y'=3w-16/3$: 
$$
f_1(w,w,3u-16/3,3)
=\frac{(3-w)(6w^2-21w+14)}{3}>0.
$$
\item[(e)] $23/9\le w\le 8/3$, $x=32/3-3w$, $y'=32/3-3w$:
$$
f(w,32/3-3w,32/3-3w,3)=\frac{-(2w-5)(27w^2-105w+86)}{9}>0.
$$
\item[(f)] $23/9\le w\le 25/9$, $x=50/9-w$, $y'=3$:
$$
f_1(w,50/9-w,3,3)=\frac{-117w^2+650w-891}{81}>0.
$$
\end{enumerate}
The only place where we have equality is $w=x=y'=2$. 
\end{proof}

\begin{lem}\label{lem-13} Suppose we are given $w$, $x$ with $2\le w\le x$ and $3w+x\le 32/3$. Then 
$F(w,x,x,3)\ge 0$
with equality if and only if $(w,x)=(8/3,8/3)$ or $(23/9,3)$.
\end{lem}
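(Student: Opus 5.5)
We would follow the proof of Lemma~\ref{lemma-8}~(b) and write $x=2u-w$ with $u=(w+x)/2$. The quartic $F(w,2u-w,2u-w,3)$ in $u$ was computed explicitly in the proof of Lemma~\ref{lem-7.5}. For fixed $w\ge 2$ the hypotheses $w\le x\le 3$ and $3w+x\le 32/3$ become
$$
w\le u\le u_{\max}(w):=\min\bigl\{(w+3)/2,\;16/3-w\bigr\}.
$$
The two bounds coincide at $w=23/9$. The interval is nonempty only for $w\le 8/3$, and it degenerates to the point $u=w=8/3$ there. The plan is to show that, for each fixed $w$, the function $F$ is non-increasing in $u$ on this interval. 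Then its minimum is attained at $u=u_{\max}(w)$, and it suffices to check non-negativity on the upper boundary.

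\emph{Step 1 (boundary values).} On the edge $x=3$, that is $u=(w+3)/2$ with $2\le w\le 23/9$, the proof of Lemma~\ref{lemma-8} already gives $F=-(3-w)^3(9w-23)\ge 0$, with equality only at $w=23/9$. On the edge $x=32/3-3w$ with $23/9\le w\le 8/3$ we would compute $F(w,32/3-3w,32/3-3w,3)$ directly. We expect it to factor with factors vanishing at $w=23/9$ and at $w=8/3$ (the latter probably a square $(3w-8)^2$, since the edge meets the diagonal there). The sign of the remaining factor on $[23/9,8/3]$ is then checked by elementary estimates. As a consistency check, on the diagonal $u=w$ the earlier computation gives $F=-w(3-w)^2(3w-8)\ge 0$ for $w\le 8/3$, vanishing only at $w=8/3$. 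This confirms the two claimed equality cases $(8/3,8/3)$ and $(23/9,3)$.

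\emph{Step 2 (monotonicity in $u$).} First we would establish convexity of $F(w,2u-w,2u-w,3)$ in $u$ on the whole interval for all $2\le w\le 8/3$. This extends Lemma~\ref{lem-7.5}, which only covers $w\ge 23/9$ and $u\ge \max\{w,25/9\}$. The second derivative is the quadratic
$$
192(w-1)^2u^2-96(3w^3-7w^2+12w-10)u+8(13w^4-34w^3+88w^2-62w-16)
$$
in $u$. We would show it is positive for $u\ge w$, either because its vertex $u_{\min}$ lies below $w$ and its value at $u=w$, namely $8w^4+16w^3-256w^2+464w-128$, is positive, or by evaluating the value at the vertex. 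Given convexity, $\partial F/\partial u$ is increasing in $u$. Hence it suffices to show $\partial F/\partial u\le 0$ at $u=u_{\max}(w)$, which is a one-variable polynomial inequality in $w$ on each of $[2,23/9]$ and $[23/9,8/3]$. Then $F$ is non-increasing in $u$ and $F\ge F(w,2u_{\max}-w,2u_{\max}-w,3)\ge 0$ by Step~1. Equality forces $u=u_{\max}$ at one of the equality points found there. If strict monotonicity fails at $w=8/3$, that is harmless, since the interval degenerates to a point.

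\emph{Main obstacle.} The delicate step is the sign of $\partial F/\partial u$ at $u=u_{\max}(w)$. On the edge $x=3$ near $w=2$ the inequality may fail or be tight. If it does, I would instead split off a region near $w=2$ where $f_2(w,x,x,3)=-wx+w+2x-1=(x-1)(2-w)+x+w-3>0$ directly, so that $F\ge 0$ trivially; this holds comfortably for $w$ close to $2$. The monotonicity argument would then only be used for $w$ near $[23/9,8/3]$, where the factorizations are cleanest.
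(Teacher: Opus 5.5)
\textbf{There is a genuine gap.} Your Step~2 rests on a false claim: $F(w,2u-w,2u-w,3)$ is \emph{not} convex in $u$ on $[w,u_{\max}(w)]$.

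\emph{Failure at $w=2$.} The second derivative is $192u^2-960u+1184$. Its vertex is at $u=5/2=u_{\max}(2)$, where it equals $-16$. In fact $F(2,x,x,3)=(x-2)^2(4-x)^2+4$ increases from $4$ to $5$ as $x$ runs from $2$ to $3$. Yet $\partial F/\partial u=32(u-2)(2u-5)(u-3)$ vanishes at both ends, so your endpoint test would wrongly certify monotonicity. Your fallback does repair this part, since $f_2(w,x,x,3)=x(2-w)+w-1\ge 5-2w>0$ for $w<5/2$.

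\emph{Failure on the remaining strip $5/2\le w\le 8/3$.} Convexity fails here too. The value of the second derivative at $u=w$, namely $8w^4+16w^3-256w^2+464w-128$, is negative for all $w$ between about $2.355$ and $2.72$ (for example it is about $-5.4$ at $w=2.6$). So $F(w,x,x,3)$ is concave in $x$ near the diagonal exactly where you still need monotonicity. This is why Lemma~\ref{lem-7.5} only asserts convexity for $u\ge\max\{w,25/9\}$. The step ``given convexity, $\partial F/\partial u$ is increasing'' is therefore unavailable, and reducing to one inequality at $u=u_{\max}$ is unjustified as written. Your ``main obstacle'' is also misplaced: the endpoint derivatives behave well, and it is convexity that breaks.

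\textbf{How the strategy can be rescued.} For $w\in[5/2,8/3]$, $\partial^2F/\partial u^2$ is a convex quadratic in $u$ (leading coefficient $192(w-1)^2>0$) that is negative at $u=w$. Hence the cubic $\partial F/\partial u$ first decreases and then possibly increases on $[w,u_{\max}]$, so its maximum is at an endpoint. The endpoints then need checking:
\begin{itemize}
\item at $u=w$ one computes $\partial F/\partial u=4(w-2)^3(w-3)<0$;
\item on the edge $x=3$ one gets $\partial F/\partial u=2(2-w)(-6w^3+72w^2-262w+300)\le 0$ for $w\le 23/9$;
\item the edge $u=16/3-w$ still has to be checked.
\end{itemize}
On that last edge the zero of $F$ at $w=8/3$ is simple (the derivative along the edge is $-8/81$), not a factor $(3w-8)^2$ as you guessed.

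\textbf{What the paper does instead.} It avoids monotonicity and convexity altogether. It splits the region at $w=23/9$ and parametrises each piece by $(a,b)$, with $u$ affine in $b\in[0,1]$. After dividing by $531441a^2$ (respectively $531441a$), it writes $F$ in the Bernstein basis $b^k(1-b)^{4-k}$. The coefficients are visibly non-negative polynomials in $a$, which gives positivity and the equality cases in one stroke.
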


\begin{proof}
Note that $w\le x$ and $3w+x\le 32/3$ imply that $w\le 8/3$. Write $u=(w+x)/2$. 
Then
$$
u\le \min\left\{\frac{w+3}{2},\frac{16}{3}-w\right\},
$$
and these two quantities are equal when $w=23/9$.

When $2\le w\le 23/9$ and $w\le u\le (w+3)/2$, write
$$
w=3-a,\qquad
u=3-a+\frac{ab}{2}.
$$
Clearly, $4/9\le a\le 1$ and $0\le b\le 1$.
Thus,
\begin{eqnarray*}
h_1(a,b)&=&\frac{1}{531441a^2}F(w,2u-w,2u-w,3)\\
&=&(3a-1)(3-a)(1-b)^4+2(a^3-9a^2+23a-7)b(1-b)^3\\
&&\quad +(a^4-8a^3+4a^2+50a-19)b^2(1-b)^2\\
&&\quad +2(-3a^3+12a^2+5a-4)b^3(1-b)+a(9a-4)b^4.
\end{eqnarray*}
It is clear that $h_1(a,b)\ge0$ for all $a$ and $b$ in their respective ranges, with equality if and only if $(a,b)=(4/9,1)$, which corresponds to $(w,u)=(23/9,25/9)$. 
\medskip

When $23/9\le w\le 8/3$ and $w\le u\le 16/3-w$, write
$$
w=\frac{24-a}{9},
\qquad
u=\frac{24-a+2ab}{9}.
$$
Clearly, $0\le a\le 1$ and $0\le b\le 1$.
We have
\begin{eqnarray*}
\lefteqn{
 h_2(a,b)
}   \\
    &=&\frac{1}{531441a}F(w,2u-w,2u-w,3)\\
    &=& 243(24-a)(3+a)^2(1-b)^4\\
    && \quad + 36(3+a)\bigl(1512+a(783-63a)+2a^3\bigr)b(1-b)^3 \\
    &&\quad + 2\bigl(87480+a(89343-15174a)+a^3(14931-804a)+8a^5\bigr)b^2(1-b)^2 \\
    &&\quad + 12\bigl(5832-1511a^3+a(5967-4914a)+386a^4+8a^3(1-a^2)\bigr)b^3(1-b)\\
    &&\quad + 9(1-a)\bigl(648-192a+741a(1-a)+a^3(136-16a)\bigr)b^4.
\end{eqnarray*}
Hence, $h_2(a,b)\ge 0$ 
with equality if and only if $a=0$ or $(a,b)=(1,1)$, which correspond to $(w,u)=(8/3,8/3)$ and $(w,u)=(23/9,25/9)$, respectively.
\end{proof}
\medskip

\begin{lem}\label{lem-14}
Suppose we are given $w$ and $x$ with 
$w\le x$, 
$3w+x\ge 32/3$ and $w+x\le 50/9$. Then 
$$
F\bigl(w,x,3(w+x)/2-16/3,3\bigr)\ge 0.
$$
We have equality if and only if $(w,x)=(8/3,8/3)$ or $(23/9,3)$.
\end{lem}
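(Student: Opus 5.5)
The plan is to follow the template of Lemma~\ref{lem-13}: parametrise the triangular region in the $(w,x)$-plane by variables in which it becomes a unit square, and then show that the resulting polynomial has a manifestly non-negative Bernstein-type expansion. The region is cut out by three lines:
\begin{itemize}
\item $w=x$;
\item $3w+x=32/3$;
\item $w+x=50/9$.
\end{itemize}
Writing $u=(w+x)/2$, so $x=2u-w$, the constraints become $u\ge w$, $u\ge 16/3-w$ and $u\le 25/9$, with $y=3u-16/3$ and $z=3$. The vertices of the region are $(w,u)=(8/3,8/3)$, $(23/9,25/9)$ and $(25/9,25/9)$. In particular $w$ ranges over $[23/9,25/9]$, and the lower boundary for $u$ is $\max\{w,16/3-w\}$, which switches at $w=8/3$.

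First I would compute $G(w,u)=F(w,2u-w,3u-16/3,3)$ explicitly from Lemma~\ref{lem-f1-f2}. In $u$ it is a quartic, since $f_1$ is quadratic in $u$ and $f_2$ is linear in $u$. As a check, $f_2=-w(2u-w)+w+5u-16/3+3-4$, and cubing this gives a cubic.

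Next I would split into two subtriangles, according to whether $w\le 8/3$ or $w\ge 8/3$.
\begin{itemize}
\item For $23/9\le w\le 8/3$, the lower boundary is $u=16/3-w$ and the upper boundary is $u=25/9$. I would substitute $w=(24-a)/9$ with $a\in[0,1]$, and $u=16/3-w+b\bigl(25/9-(16/3-w)\bigr)=(24+a)/9+b(1-a)/9$, or a similar affine choice, with $b\in[0,1]$.
\item For $8/3\le w\le 25/9$, the boundaries are $u=w$ and $u=25/9$. I would take $w=(24+a)/9$ and $u=w+b(1-a)/9$.
\end{itemize}
In each case, after dividing out the obvious factors that vanish at the apex $(8/3,8/3)$, which is the tip where $a=0$, I expand $G$ in the Bernstein basis $b^k(1-b)^{4-k}$ and check that every coefficient is a polynomial in $a$ that is non-negative on $[0,1]$. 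This is exactly as for $h_1$ and $h_2$ in the proof of Lemma~\ref{lem-13}.

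The equality cases then come from identifying where coefficients vanish. At $a=0$ the region degenerates to the point $(8/3,8/3)$. At the vertex $(23/9,25/9)$, that is $(w,x)=(23/9,3)$, the relevant endpoint coefficient must contain a factor vanishing there. For consistency, note that Lemma~\ref{lem-13} already gives $F(8/3,8/3,8/3,3)=0$, and Lemma~\ref{lemma-8}(b) gives $F(23/9,3,3,3)=0$, so these two zeros are forced.

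The main obstacle I expect is that the Bernstein coefficients may fail to be obviously non-negative, particularly near $a=1$ in the first triangle, where the region pinches towards $(23/9,25/9)$ and $G$ vanishes. If that happens, I would first try two remedies:
\begin{itemize}
\item degree-elevate in $b$, that is, multiply by $(b+(1-b))^m$, to obtain coefficients with a more definite sign;
\item subdivide the $a$-interval.
\end{itemize}
As a fallback, I would use convexity. I would show $\partial^2G/\partial u^2\ge 0$ on the region, by an argument like that of Lemma~\ref{lem-7.5}, and then control $G$ on the boundary edges by one-variable polynomials. That would require treating the interior minimum carefully rather than only the boundary values, since convexity alone gives a lower bound only via the derivative on the edges.
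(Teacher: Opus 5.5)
Your plan is sound. When carried out, it succeeds without degree elevation, subdivision or the convexity fallback.

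Two corrections to your description first.

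\begin{itemize}
\item In both of your parametrisations, $a=0$ is the segment $w=8/3$, $8/3\le u\le 25/9$, not a point. The collapsed edges are at $a=1$, namely the vertices $(w,x)=(23/9,3)$ and $(25/9,25/9)$. The apex $(8/3,8/3)$ is the corner $a=b=0$ of both squares. So there is no factor at $a=0$ to divide out. On the left piece the common factor is $1-a$, which comes from the zero at $(23/9,3)$.
\item $f_2=-w(2u-w)+5u-19/3$; your expression has a spurious $+w$.
\end{itemize}

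Write $531441\,G=\sum_{k=0}^4 d_k(a)\,b^k(1-b)^{4-k}$. The left piece $w=(24-a)/9$ gives
\[
d_0=9a(1-a)(648+549a-741a^2+136a^3-16a^4),
\]
\[
d_4=(1-a)(1737+357a-233a^2-101a^3-28a^4-4a^5).
\]
Likewise $d_1,d_2,d_3$ are $(1-a)$ times polynomials positive on $[0,1]$; for instance $d_1=(1-a)(5832+24138a-8802a^2-4680a^3+1032a^4-240a^5)$.

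The right piece $w=(24+a)/9$ gives $d_0=a(5832+1701a-5346a^2+189a^3+72a^4)$. Here $d_1,\dots,d_4$ are strictly positive on $[0,1]$; for instance $d_4=1737+1380a-590a^2-132a^3+73a^4-24a^5+4a^6$.

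Every sign check follows from $0\le a^k\le a\le 1$. The equality cases then drop out: $G$ vanishes only on the collapsed edge $a=1$ of the left piece and at the corner $a=b=0$.

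The paper avoids the split. It uses the single affine chart $a=9(w+x)/2-24$, $b=9(x-w)/4$, which maps the whole region onto the triangle $0\le b\le a\le 1$. There $h_3=531441F$ is a polynomial in $b^2$. Regrouping $\partial h_3/\partial b$ in terms of $b^2-a^2\le 0$ shows that $h_3$ decreases in $b$. This reduces everything to the edge $b=a$, the line $3w+x=32/3$, where $h_3(a,a)$ is exactly your left-piece $d_0$.

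The paper's argument needs one sign check plus one factorised one-variable polynomial, but it relies on spotting that regrouping. Your Bernstein certificate costs ten coefficient polynomials, but it is purely mechanical, in the same style as $h_1,h_2$ in Lemma~\ref{lem-13}. It also makes the uniqueness of the two zeros transparent.
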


\begin{proof}
Write 
$$
a=\frac{9(w+x)}{2}-24,\quad b=\frac{9(x-w)}{4}.
$$
Therefore
$$
w=\frac{a-2b}{9}+\frac{8}{3},\quad x=\frac{a+2b}{9}+\frac{8}{3},\quad y=\frac{3(w+x)}{2}-\frac{16}{3}=
\frac{a}{3}+\frac{8}{3}.
$$
The conditions 
$$
w\le x,\quad w+x\le \frac{50}{9},\quad 3w+x\ge \frac{32}{3}
$$
translate to 
$$
0\le b\le a\le 1.
$$
Substituting into $F(w,x,y,3)$ and factoring out a common denominator gives
\begin{eqnarray*}
h_3(a,b)&=&531441F(w,x,y,z)  \\
& = & 256b^6 + (- 128a^2  - 1536a + 1872)b^4 \\
&& \quad + (16a^4 + 96a^3 + 5832a^2 - 6264a - 2592)b^2 \\
&& \quad  + 72a^5 + 189a^4 - 5346a^3 + 1701a^2 + 5832a.
\end{eqnarray*}
Differentiating with respect to $b$ and using $0\le b\le a\le 1$, we obtain 
\begin{eqnarray*}
\frac{\partial h_3(a,b)}{\partial b}
& = & 1536b^5 
- 512a^2b^3 - 6144ab^3 + 7488b^3 \\
&& \quad +32a^4b +  192a^3b + 11664a^2b - 12528ab - 5184b \\
& = & 1536(b^2-a^2)b^3 + (1024a^2 - 6144a + 7488)(b^2-a^2)b \\
&& \quad + (1056a^4-5952a^3+19152a^2-12528a-5184 )b \\
& \le & 0. 
\end{eqnarray*}
Therefore $h_3$ is a decreasing function of $b$ and so
\begin{eqnarray*}
h_3(a,b)&\ge&h_3(a,a)\\
& = & 9a(1-a)(648+549a-741a^2+136a^3-16a^4) \\
& \ge &0.
\end{eqnarray*} 
\end{proof}

\medskip

\begin{proof}(Proposition~\ref{prop-z=3}) 
    We use Lemma~\ref{loxo condition}. Using Lemma~\ref{lem-f1-loxo-cond} we see that when 
    $$
    2\le w\le x\le 3,\quad w+x\le 50/9,\quad y'=\max\bigl\{x,\,3(w+x)/2-16/3\bigr\}
    $$
    then $f_1(w,x,y',3)\ge 0$. Similarly, when $x\ge 3(w+x)/2-16/3$, Lemma~\ref{lem-13} implies that $F(w,x,x,3)\ge 0$ and when $x\le 3(w+x)/2-16/3$ then Lemma~\ref{lem-14} implies that $F\bigl(w,x,3(w+x)/2-16/6,3\bigr)\ge 0$.
    Thus, $F(w,x,y,3)\ge 0$ for all $y\ge y'$,
    with equality if and only if $(w,x,y)=(8/3,8/3,8/3)$ or $(23/9,3,3)$. 
\end{proof}

\medskip

\section{Values of $(p_1,p_2,p_3,p_4)$ where $R_1R_2R_3$ is loxodromic.}

The lower bounds on $p_3$ and $p_4$ (that is on $y$ and $z$) given in 
Proposition~\ref{prop-main-small}~(a) and (b) are the best linear functions of $(w+z)$,
but they are not optimal. In this section 
we give direct calculations of the values of $p_1$, $p_2$, $p_3$, $p_4$ for which $W_B=R_1R_2R_3$ is loxodromic in $\Gamma(p_1,p_2,p_3;p_4)$. 

We first consider the cases where $3\le p_1\le 7$. In  Lemmas~\ref{p=3} and~\ref{p=4, 5, 6, 7}, we again show $f_2(w,x,y,z)>0$ in order
to conclude that $F(w,x,y,z)>0$. 
 We also use Lemma~\ref{lem-H-sig}, which states that for the Hermitian form to have the correct signature, we require $f_2(w,x,y,z)>-1$. We will restrict our attention to 
 parameters $p_1$, $p_2$, $p_3$, $p_4$ where this inequality is satisfied.

\begin{lem}\label{p=3}
Suppose that $p_1=3$ and $f_2(w,x,y,z)>-1$. Then, apart from the groups $\Gamma(3,3,4;7)$ and $\Gamma(3,3,5;5)$, the element 
$R_1R_2R_3$ of the group $\Gamma(p_1,p_2,p_3;p_4)$ is loxodromic. 
\end{lem}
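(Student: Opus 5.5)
Throughout, $p_1=3$, so $w=1+2\cos(2\pi/3)=0$, and the polynomials of Lemma~\ref{lem-f1-f2} become much simpler:
$$
f_2(0,x,y,z)=x+y+z-4,\qquad f_1(0,x,y,z)=(x+y-6)(x+z-6)+6x-x^2-20.
$$
The plan is to use $F=f_1^2+4f_2^3$ exactly as in Lemma~\ref{lem w<2}. If $f_2>0$, or if $f_2=0$ and $f_1\neq 0$, then $F>0$ and $R_1R_2R_3$ is loxodromic. By Lemma~\ref{lem-H-sig} we also have $f_2>-1$. So the only cases left to examine are those with
$$
3< x+y+z\le 4,
$$
where $x\le y\le z$ are values of $1+2\cos(2\pi/p)$ for $3\le p\le\infty$. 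These values are $0,\,1,\,1.618\ldots,\,2,\,2.247\ldots,\,2.414\ldots,\ldots$, increasing towards $3$.

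First I enumerate the triples in this window, using the ordering $x\le y\le z$ and casing on $x$.
\begin{itemize}
\item If $x\ge 1.618\ldots$, then $x+y+z>4$, so there is nothing to check.
\item If $x=1$ ($p_2=4$), then $2<y+z\le 3$ with $y\ge1$. This forces $(p_2,p_3,p_4)=(4,4,5)$ or $(4,4,6)$.
\item If $x=0$ ($p_2=3$), then $3<y+z\le4$. Since $z\le 3$ we need $y\ge 1$. The options are $y=2$ with $z=2$, giving $(3,6,6)$; $y=1.618\ldots$ with $p_4\in\{5,6,7\}$; and $y=1$ ($p_3=4$) with any $p_4\ge7$.
\end{itemize}
All cases are finite except the family $\Gamma(3,3,4;p_4)$ with $p_4\ge 7$.

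For each isolated case I evaluate $F(0,x,y,z)$ numerically, i.e.\ with exact cosines to enough decimals, and read off its sign. I expect $\Gamma(3,3,5;5)$ to be the only one with $F<0$, giving the single elliptic exception from these finitely many cases. Cases with $f_2=0$, such as $(3,6,6)$ or $(4,4,6)$, are settled by checking $f_1\ne0$.

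The main obstacle is the infinite family $x=0$, $y=1$, which I treat as a one-variable problem in $z$. Here $f_1=10-5z$ and $f_2=z-3$, so
$$
F=25(2-z)^2+4(z-3)^3=:\phi(z).
$$
I check directly that $\phi(1+2\cos(2\pi/7))<0$, which is the exception $\Gamma(3,3,4;7)$, and that $\phi(1+2\cos(\pi/4))>0$. I then show that $\phi>0$ on $[1+\sqrt2,3)$, which covers $p_4\ge 8$. To do this I would write $\phi'(z)=50(z-2)+12(z-3)^2$, which is positive for $z\ge 2$. Hence $\phi$ is increasing there, and positivity at $z=1+\sqrt2$ propagates to all larger $z$. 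Combining this with the finite enumeration proves the lemma.
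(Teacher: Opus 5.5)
Your proposal is correct and follows essentially the same route as the paper: discard the cases $f_2>0$ and $f_2\le -1$, check the finitely many remaining triples (only $\Gamma(3,3,5;5)$ has $F<0$), and treat the family $\Gamma(3,3,4;p_4)$ through the cubic $F(0,0,1,z)=4z^3-11z^2+8z-8$. Your monotonicity argument, $\phi'(z)=50(z-2)+12(z-3)^2>0$ on $[2,3]$, supplies the justification for positivity when $p_4\ge 8$ that the paper only asserts, and it also covers $p_4=\infty$ (that is, $z=3$), although you wrote the interval as $[1+\sqrt2,3)$.
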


\begin{proof}
When $p_1=3$ we have $w=0$ and so
$$
f_2(0,x,y,z)=x+y+z-4.
$$
If $f_2(0,x,y,z)\le -1$ then the Hermitian form does not have the correct signature, and if $f_2(w,x,y,z)>0$ then $R_1R_2R_3$ is loxodromic.

If $4\le p_2$ and $5\le p_3\le p_4$ then
$$
f_2(0,x,y,z)=x+y+z-4\ge 1 + (1+\sqrt{5})-4>0.
$$
Therefore $f_2(0,x,y,z)>0$
Hence, we only need to consider the two cases $p_2=3$ and $p_2=p_3=4$.

If $4=p_2=p_3$ and $7\le p_4$ then
$$
f_2(0,x,y,z)=x+y+z-4\ge 2+\bigl(1+2\cos(2\pi/7)\bigr)-4>0.
$$
When $p_4=4$ then $f_2(0,x,y,z)=-1$ and the Hermitian form is degenerate. When $p=5$ or $6$
we can check by inspection that
$$
F\left(0,1,1,\frac{1+\sqrt{5}}{2}\right)=4\sqrt{5}-7>0,\quad
F(0,1,1,2)=9>0.
$$
Therefore, $R_1R_2R_3$ is loxodromic in $\Gamma(3,4,4;p_4)$ for $p_4\ge 5$.

This leaves the case where $p_2=3$. Here we have 
$$
f_2(0,0,y,z)=y+z-4.
$$
This is non-negative when $6\le p_3\le p_4$ and positive unless $6=p_3=p_4$. In the latter case, $F(0,0,2,2)=16$. Therefore $R_1R_2R_3$ is loxodromic in $\Gamma(3,3,p_3,p_4)$
for $6\le p_3\le p_4.$
Hence we need to consider the cases where $p_3=3,4,5$. 

If $p_3=3$ then 
$f_2(0,0,0,z)=z-4\le -1$
and so the Hermitian form has the wrong signature for all $z\le 3$.

If $p_3=4$ then $f_2(0,0,1,z)=z-3$.
This is always negative, and is less than or equal to $-1$ when $z\le 2$, that is $p_4\le 6$. In this case, 
$$
F(0,0,1,z)=4z^3-11z^2+8z-8.
$$
This is negative when $p_4=7$ and positive when $p_4\ge 8$.

If $p_4=5$ then
$$
f_2\left(0,0,\frac{1+\sqrt{5}}{2},z\right)=z-\frac{7-\sqrt{5}}{2}.
$$
This is positive when $p_4\ge 8$. In this case 
$$
F\left(0,0,\frac{1+\sqrt{5}}{2},z\right)
=4z^3-\frac{21-\sqrt{5}}{2}\,z^2/2 +4(1+\sqrt{5})z-2(5+\sqrt{5}).
$$
This is negative when $p_4=5$ and positive when $p_4\ge 6$.
\end{proof}

When $p_1=p_2=p_3=p_4$ we have $F(w,w,w,w)=0$ and so $R_1R_2R_3$ has a repeated eigenvalue.
It is not hard to show that $R_1R_2R_3$ is parabolic when $p_1=6$ and elliptic otherwise. 
Note that $f_2(w,w,w,w)=-(w-2)^2$, which implies $\Gamma$ is in ${\rm SU}(2,1)$ if and only
if $1<w<3$, that is $5\le p_1<\infty$. So to find groups where $R_1R_2R_3$ is loxodromic, it
suffices to suppose $p_4\ge p_1+1$. 

\medskip

\begin{lem}\label{p=4, 5, 6, 7}
Suppose that $p_1\in\{4,\,5,\,6,\,7\}$. Then, apart from the group $\Gamma(4,4,4,5)$, 
if 
$$
p_1\le p_2\le p_3\le p_4\quad \hbox{ and } \quad (p_1+1)\le p_4
$$
then in the group $\Gamma(p_1,p_2,p_3;p_4)$ the element $R_1R_2R_3$ is loxodromic. Moreover,
in these groups the Hermitian form has signature $(2,1)$. 
\end{lem}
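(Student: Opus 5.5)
We argue as in Lemma~\ref{p=3}: since $F=f_1^2+4f_2^3$, it suffices to show $f_2(w,x,y,z)>0$, which simultaneously gives $F>0$ (so $R_1R_2R_3$ is loxodromic by Theorem~\ref{Goldman}) and $f_2>-1$ (so the Hermitian form has signature $(2,1)$ by Lemma~\ref{lem-H-sig}). Where $f_2\le 0$ we evaluate $F$ directly. The starting point is the decomposition already used in Lemma~\ref{lem w<2}:
$$
f_2(w,x,y,z)=-wx+w+x+y+z-4=w(2-x)+(x-w)+y+z-4
$$
when $x\le 2$, and $f_2=(2-w)(x-2)+(y-w)+(z-x)$ in general. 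For $p_1\in\{4,5,6,7\}$ we have $w\in\{1,\,(1+\sqrt5)/2,\,2,\,1+2\cos(2\pi/7)\}$. Since $f_2$ is increasing in $y$ and $z$, the reduction is to the extreme cases $p_3=p_2$ and $p_4=\max\{p_3,p_1+1\}$.

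\textbf{Case $p_1\le 6$ ($w\le 2$).} By Lemma~\ref{lem w<2}, $f_2\ge 0$ whenever $x\ge 2$ (with equality only at $w=x=y=z=2$, which is excluded by $p_4\ge p_1+1$), or whenever $y+z\ge 4$. So only $p_2\le 5$ remains, and in those cases $y$ and $z$ are small.

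For $p_1=4$ ($w=1$) we get $f_2=y+z-3$. This is positive as soon as $y+z>3$, i.e.\ unless $p_2=p_3=4$ and $z\le 2$. The value $z=1$ is impossible, since $p_4\ge5$. The value $p_4=5$ gives $z=(1+\sqrt5)/2$; this is the exceptional group $\Gamma(4,4,4;5)$, where one checks $F<0$. For $p_4=6$ we have $f_2=0$ and $F=f_1^2>0$, which needs a check that $f_1(1,1,1,2)\neq0$.

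For $p_1=5$ we have $w=(1+\sqrt5)/2$, and the tuples with $y+z<4$ are few: $p_2,p_3\in\{5\}$ with $p_4$ small. These are to be listed and $f_2$ or $F$ evaluated for each. The case $p_1=6$ ($w=2$) gives $f_2=x-4+y+z-x=y+z-2x+(x-2)\cdot0$, i.e.\ $f_2=y+z-4+(x-2)\cdot 0$, which is positive unless $y=z=2$, and $p_4\ge7$ forces $z>2$.

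\textbf{Case $p_1=7$.} Here $w=1+2\cos(2\pi/7)\approx2.247>2$, so $(2-w)(x-2)<0$. However, $(y-w)+(z-x)$ compensates because $z\ge 1+2\cos(2\pi/8)$ exceeds $x$ whenever $p_2\le p_4-1$. We bound $f_2\ge (y-w)+(z-x)-(w-2)(3-2)$ crudely at first; if that fails near $x\to3$, we instead use the monotonicity in $y,z$ and check the finitely many small $p_2$ directly, with a uniform estimate covering large $p_2$.

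\textbf{Main obstacle.} The hardest part is the $p_1=7$ regime with $p_2=p_3$ close to $p_4$ and all of them large, where $f_2$ tends to $-(w-2)(x-2)+ (z-x)+(x-w)$ and may become small. There we plan to fall back on Lemma~\ref{loxo condition}, or on the bound from Proposition~\ref{prop-y=z} (since $w+x\le 50/9$ and $y,z\ge x$ satisfy the hypotheses of that result once $y\ge (9(w+x)-8)/14$), and to handle the residual finite list by numerical evaluation of $F$.
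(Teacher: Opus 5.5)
\textbf{The strategy is right; the case $p_1=7$ has no argument.} Your strategy is the paper's: show $f_2>0$, which gives $F>0$ and, via Lemma~\ref{lem-H-sig}, the signature claim at the same time. For $p_1\in\{4,5,6\}$ your reduction to Lemma~\ref{lem w<2} goes through, up to small checks you leave undone. These are $f_1(1,1,1,2)=-1$, and the fact that the leftover groups $\Gamma(5,5,5;6)$ and $\Gamma(5,5,5;7)$ have $f_2=y+z-(9-\sqrt5)/2>0$. Your separate formula for $p_1=6$ is wrong; in fact $f_2(2,x,y,z)=(y-x)+(z-2)$. That case is already covered, though, since $x\ge 2$.

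$p_1=7$ is the only case Lemma~\ref{lem w<2} does not reach. There you give contingencies rather than an argument, and they aim at the wrong place.
\begin{itemize}
\item Your crude bound gives $f_2\ge z-2w+2$. This is positive for every $p_4\ge 9$ and fails only at $p_4=8$, i.e.\ at the smallest parameters, not as $x\to3$.
\item Your ``main obstacle'' is not one: as $x,y,z\to 3$ one has $f_2\to 5-2w\approx 0.51$. The tight case is $\Gamma(7,8,8;8)$, where $f_2\approx 0.065$.
\item Falling back on Proposition~\ref{prop-y=z} would only give $F\ge 0$, not the signature statement.
\end{itemize}

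\textbf{The missing step} is a lower bound for $f_2$ that is uniform in $x$. Since $w>1$, $f_2=(1-w)x+w+y+z-4$ decreases in $x$, so the worst case is $x=y$. Since $w>2$, the result then decreases in $y$, so take $y=z$. This gives $f_2\ge (3-w)z+w-4\ge (3-w)(1+\sqrt2)+w-4>0$.

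The paper instead uses the identity
\[
f_2=\tfrac12(w-1)(y+z-2x)+\tfrac12(3-w)(y+z-2)-1 .
\]
This shows $f_2\ge -1$ for all ordered parameters with $p_1\ge4$, which is the signature claim. It also gives $f_2\ge 0$ as soon as $y+z\ge 2(4-w)/(3-w)$. For $p_1=7$ this threshold is about $4.656$, while $y+z\ge w+1+\sqrt2\approx 4.661$. The same test settles $p_1=5,6$ at once and reduces $p_1=4$ to $y+z\ge 3$, so the case analysis you set up becomes unnecessary.
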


\begin{proof}
Note that $p_1\ge 4$ means $w\ge 1$. We write
$$
f_2(w,x,w,z)=\frac{1}{2}\,(w-1)\bigl(y+z-2x)
+\frac{1}{2}\,(3-w)(y+z-2)-1\ge -1
$$
with equality if and only if $w=x=y=z=1$. 
Therefore, using Lemma~\ref{lem-H-sig}, if $p_1\ge 4$ then $H$ has signature (2,1) unless
$p_1=p_2=p_3=p_4=4$. 

Since we are assuming $1\le w\le x\le y\le z$ this expression for $f_2(w,x,y,z)$ is non-negative whenever 
\begin{equation}\label{eq-y+z-bd}
y+z\ge 2+\frac{2}{3-w}=\frac{2(4-w)}{3-w}.
\end{equation}
It is easy to check that, for $p_1\in\{5,6,7\}$, the inequality \eqref{eq-y+z-bd} holds
when $p_3\ge p_1$ and $p_4\ge (p_1+1)$. 

Now consider $p_1=4$ so $w=1$, then the expression \eqref{eq-y+z-bd} translates to 
$y+z\ge 3$. 
If $5\le p_3\le p_4$ then $y+z\ge 1+\sqrt{5}>3$.
If $p_3\ge 4$ and $p_4\ge 6$, which is $y\ge 1$ and $z\ge 2$ then we have $y+z\ge 3$
with equality if and only if 
$(w,x,y,z)=(1,1,1,2)$, so $(p_1,p_2,p_3,p_4)=(4,4,4,6)$.
In this case $f_1^2(1,1,1,2)=1$ and so $F(1,1,1,2)>0$. Therefore $R_1R_2R_3$ is loxodromic
when $p_1=4$ and $(p_1+1)\le p_4$ apart from when $(p_1,p_2,p_3,p_4)=(4,4,4,5)$. 
\end{proof}

\medskip

\medskip
We then consider the cases $p_1=8$ and $p_1=9$. The following result shows that there are finitely many $\Gamma(p_1, p_2, p_3; p_4)$ groups with $p_1=8$ or $p_1=9$ in which the element $R_1R_2R_3$ is elliptic or parabolic.

\begin{lem}\label{p1=8,9}
    The element $R_1R_2R_3$ is loxodromic in the groups $\Gamma(p_1, p_2, p_3; p_4)$ for $p_1 \in \{8, 9\}$, except for the following cases:
\begin{enumerate}
    \item[(i)] $\Gamma(8, 8, 8; 8)$, $\Gamma(8, 8, 8; 9)$, and $\Gamma(8, 9, 9; 9)$;
    \item[(ii)] $\Gamma(9, p_2, p_2+1; p_2+1)$ with $9 \le p_2 \le 16$;\medskip
    \item[(iii)] $\Gamma(9, p_2, p_2; p_4)$ with $p_2 \le p_4 \le p_2 + \delta(p_2)$, where
    \[
    \delta(p_2) = \begin{cases}
    3, & p_2 \in \{9, 10, 11, 12\}, \\
    2, & p_2 \in \{13, 14, 15, 16, 17\}, \\
    1, & p_2 \in \{18, 19, 20\}, \\
    0, & p_2 \in \{21, 22\}.
    \end{cases}
    \]
\end{enumerate}
\end{lem}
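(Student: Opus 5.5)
The plan is to reduce the infinite family to a finite check, using Lemma~\ref{loxo condition} and Proposition~\ref{prop-y=z}, and then handle the finitely many remaining quadruples by evaluating $F$ directly. For $p_1\in\{8,9\}$ we have $w=1+2\cos(2\pi/p_1)\ge 1+\sqrt2>2$, so $w+x+y\ge 6$ for any $y\ge x\ge w$, and Lemma~\ref{loxo condition} applies. This gives monotonicity: once $f_1\ge0$ and $F>0$ at some $(w,x,y',z')$, the element $R_1R_2R_3$ is loxodromic for all larger $p_3,p_4$. The bound of Proposition~\ref{prop-y=z} makes this explicit. Note that $w+x\le 50/9$ holds for all $p_2$ when $p_1\le 9$, by Lemma~\ref{lemma-11}. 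Hence $F>0$ as soon as $y\ge y_0:=\max\{x,(9(w+x)-8)/14\}$, with the equality cases $(2,2,2,2)$ and $(23/9,3,3,3)$ not arising here.

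First, control $p_2$. If $x$ is large, then $(9(w+x)-8)/14\le x$ exactly when $9w-5x-8\le0$. For $p_1=8$ this holds for all $x\ge w$, since $9w-8\le 5w$ iff $w\le 2$ is false. So I will instead handle $p_1=8$ by looking at $F(w,x,x,x)=(3-w)(x-w)^2((3-w)x^2-4)$. This is positive once $x>2/\sqrt{3-w}$, and for $w=1+\sqrt2$ that threshold is near $p_2=10$. So for $p_2\ge 10$ the point $y'=z'=x$ works, with $f_1\ge0$ checked from the formula in Lemma~\ref{lem-f1-f2}. For $p_1=9$ the same factorisation gives a threshold $x>2/\sqrt{3-w}$, which is close to $3$ because $w\approx 2.532$. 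This is why $p_2$ up to about $22$ survives, so for $p_1=9$ I expect to need $y'$ slightly above $x$ rather than $y'=x$.

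Second, for each remaining $p_2$, find the least $(p_3,p_4)$ with $p_3\le p_4$ at which $f_1\ge0$ and $F>0$, splitting into $p_3=p_2$ versus $p_3\ge p_2+1$. For $p_3=p_2$, increasing $p_4$ eventually makes $F>0$, and the threshold defines $\delta(p_2)$. For $p_3=p_2+1$, one checks that $F<0$ only at $p_4=p_2+1$ (for $p_2\le16$). For $p_3=p_2+2$, $F>0$ already at $p_4=p_3$. Since $F$ is increasing in both $y$ and $z$ beyond these points, the exceptional lists (i)--(iii) follow. For large $p_2$, a uniform argument near $x=3$ shows that everything with $p_2\ge 23$ is loxodromic. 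This can be done by expanding $F(w,x,x,z)$ to first order in $3-x$ and $3-z$ with $w=1+2\cos(2\pi/9)$ fixed, using the equality case $(23/9,3,3,3)$ with $w<23/9$.

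The main obstacle is this last uniformity: ruling out infinitely many $p_2$ for $p_1=9$, where $F$ is close to $0$ near $x=y=z=3$. I would treat it by writing $x=3-s$ and $z=y=3-t$ with $t\le s$, and expanding $F$ as a polynomial in $s,t$. The aim is to show the leading term is positive whenever $t\le s\le s_0$, for an explicit $s_0$ corresponding to $p_2=23$. The finitely many remaining cases are then verified numerically, and parabolic boundary cases are checked separately via $F=0$.
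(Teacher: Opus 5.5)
Your route is the paper's: Lemma~\ref{loxo condition} (valid since $w>2$) applied at suitable base points, the factorisation $F(w,x,x,x)=(3-w)(x-w)^2\bigl((3-w)x^2-4\bigr)$ together with $f_1(w,x,x,x)=(3-w)\bigl(x^2-(6-w)x+6\bigr)-2$ to clear large $p_2$, and a finite table of evaluations for the rest. The one real defect is the step you call the main obstacle, which is not an obstacle. Write $x(p)=1+2\cos(2\pi/p)$. The factor $(3-w)x^2-4$ is increasing in $x$. Because $w=1+2\cos(2\pi/9)<23/9$, its zero $2/\sqrt{3-w}\approx 2.9238$ lies strictly below $3$, between $x(22)\approx 2.9190$ and $x(23)\approx 2.9258$. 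At $x=3$ the factor equals $23-9w>0$. Hence $F(w,x,x,x)>0$ for every $p_2\ge 23$, including $p_2=\infty$. Also $f_1(w,x,x,x)\ge 0$ already for $p_2\ge 18$. So taking $y'=z'=x$ in Lemma~\ref{loxo condition} disposes of the whole tail at once, which is exactly the paper's first step. Even Proposition~\ref{prop-y=z}, which you quote, gives $F>0$ for all $x\le y\le z$ as soon as $x\ge (9w-8)/5$, i.e.\ $p_2\ge 31$, so the tail is never infinite. The substitute you propose, expanding in $s=3-x$, $t=3-y$ and showing a leading term is positive for $t\le s\le s_0$, would not be a proof as described. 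Positivity of the leading part of an expansion only controls $F$ on some unquantified neighbourhood of $s=t=0$. It does not give positivity on a region of fixed size reaching down to $p_2=23$ unless you also bound the remaining terms.

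Two smaller points. First, your remark that $9w-5x-8\le 0$ holds for all $x\ge w$ when $p_1=8$ is backwards. It holds only for $x\ge (9w-8)/5\approx 2.746$; you rightly abandon it. Second, your finite check is more complete than the paper's written one. For $9\le p_2\le 16$ you explicitly use the base point $(p_3,p_4)=(p_2+1,p_2+2)$. The paper lists only $(p_2,p_2+\delta(p_2)+1)$, $(p_2+2,p_2+2)$, and $(p_2+1,p_2+1)$ for $p_2\ge 17$. This leaves groups such as $\Gamma(9,9,10;11)$ and $\Gamma(9,9,10;12)$ formally uncovered. They are in fact loxodromic: at $\Gamma(9,9,10;11)$ one finds $f_1\approx 0.089$ and $F\approx 0.0075$. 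With the factorisation argument in place of your expansion, your plan gives a complete proof.
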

\begin{proof}
We will use Lemma~\ref{loxo condition} to prove this result.

    First, we prove that the element $R_1R_2R_3$ is loxodromic when $p_1=8$ with $p_2\ge 10$, or when $p_1=9$ with $p_2\ge 23$. Direct computation yields
\begin{eqnarray*}
    f_1(w,x,x,x) &=&(3-w)\bigl(x^2-(6-w)x+6\bigr)-2,\\
    F(w, x, x, x)&=&(3 - w)(x - w)^2 \left((3 - w) x^2 - 4\right).
\end{eqnarray*}
We see that $f_1(w, x, x, x)\ge 0$ if $p_1=8$ with $p_2 \ge 10$, or if $p_1=9$ with $p_2 \ge 18$, and that $F(w, x, x, x)>0$ if $p_1=8$ with $p_2 \ge 10$, or if $p_1=9$ with $p_2 \ge 23$. This confirms the claim by Lemma~\ref{loxo condition}.

Second, we consider the case $p_1=8$ with $p_2=8$ or $9$. In these cases, we can compute
\begin{enumerate}
    \item[(a)] $f_1(w,w,w,z)>0$ and $F(w,w,w,z)>0$ for $p_1=p_2=p_3=8$ and $p_4=10$; \medskip
    \item[(b)] $f_1(w,x,y,y)>0$ and $F(w,x,y,y)>0$ for $p_4=p_3=p_2+1$ with $p_2\in\{8,9\}$;\medskip
    \item[(c)] $f_1(w,x,x,z)>0$ and $F(w,x,x,z)>0$ for $p_2=p_3=9$ and $p_4=10$.
\end{enumerate}

Then, we can check by inspection that $R_1R_2R_3$ is elliptic in $\Gamma(8, 8, 8; 8)$, $\Gamma(8, 8, 8; 9)$ and $\Gamma(8,9,9,9)$.

Finally, we consider the case $p_1=9$ with $9\le p_2\le 22$. By direct computation, we have 
\begin{enumerate}
    \item[(a)] $f_1(w, x, x, z)>0$ and $F(w, x, x, z)>0$ for $p_4=p_2+\delta(p_2)+1$;\medskip
    \item[(b)] $f_1(w, x, y, y)>0$ and $F(w, x, y, y)>0$ for $p_4=p_3=p_2+2$ with $9 \le p_3 \le 16$;\medskip
    \item[(c)] $f_1(w, x, y, y)>0$ and $F(w, x, y, y)>0$ for $p_4=p_3=p_2+1$ with $17 \le p_2 \le 22$.
\end{enumerate}
This implies that, by Lemma~\ref{loxo condition}, except for the groups listed in (ii) and (iii), the element $R_1R_2R_3$ is loxodromic in all $\Gamma(p_1, p_2, p_3; p_4)$ groups with $p_1=9$. We can also check that $F(w,x,y,z)<0$ for all the 47 groups listed in (ii) and (iii).
\end{proof}

\medskip

When $p_1\ge 10$ there are only finitely many $p_2\ge p_1$ so that $w+x\le 50/9$. 
In the following table, using Lemma~\ref{loxo condition}, we list the quadruples $(p_1,p_2,p_3',p_4')$ with $p_1\le p_2\le p'_3\le p'_4$ so that $w+x\le 50/9$ for which
$R_1R_2R_3$ is loxodromic in the group $\Gamma(p_1,p_2,p_3,p_4)$ for all $p_3\ge p_3'$ and $p_4\ge p_4'$. Note that when $p_1=10$ and $10\le p_2\le 14$ we can take $p'_3=p_2$.
\begin{table}[htbp]
\centering
$$
\begin{array}{| ll | l |}
\hline
p_1 & p_2 & \{(p_3',p_4')\} \\
\hline
10 & 10 & (10,21),\,(11,16),\,(12,14),\,(13,13) \\
10 & 11 & (11,25),\, (12,19),\, (13,16),\,(14,15),\,(15,15) \\
10 & 12 & (12,30),\,(13,22),\,(14,19),\,(15,17),\,(16,16) \\
10 & 13 & (13,39),\,(14,26),\,(15,22),\,(16,20),\,(17,18),\,(18,18) \\
10 & 14 & (14,62),\,(15,33),\,(16,26),\,(17,23),\,(18,21),\,(19,20),\,(20,20) \\
10 & 15 & (16,46),\,(17,33),\,(18,28),\,(19,25),\,(20,23),\,(21,22),\,(22,22)\\
10 & 16 & (17,89),\,(18,45),\,(19,35),\,(20,31),\,(21,28),\,(22,26),\,(23,25),\,(24,24) \\
10 & 17 & (19,83),\,(20,49),\,(21,40),\,(22,35),\,(23,32),\,(24,30),\,(25,28),\,(26,27),\,(27,27)\\
10 & 18 & (21,110),\,(22,60),\,(23,47),\,(24,41),\,(25,37),\,(26,34),\,(27,32),\,(28,31)\\
& & (29,30),\,(30,30) \\
10 & 19 & (24,86),\,(25,61),\,(26,51),\,(27,45),\,(28,41),\,(29,39),\,(30,37),\,(31,35),\\
& & (32,34),\,(33,33)\\
10 & 20 & (27,107),\,(28,74),\,(29,61),\,(30,54),\,(31,49),\,(32,46),\,(33,43),\,(34,41),\\
& & (35,40),\,(36,39),\,(37,37)\\
10 & 21 & (31,126),\,(32,89),\,(33,74),\,(34,65),\,(35,59),\,(36,55),\,(37,52),\,(38,50),\\
& & (39,48),\,(40,46),\,(41,45),\,(42,44),\,(43,43)\\
10 & 22 & (36,205),\,(37,124),\,(38,99),\,(39,86),\,(40,77),\,(41,71),\,(42,67),\,(43,63),\\
& & (44,60),\,(45,58),\,(46,56),\,(47,54),\,(48,53),\,(49,52),\,(50,51),\,(51,51) \\
10 & 23 & (44,332),\,(45,178),\,(46,138),\,(47,117),\,(48,105),\,(49,96),\,(50,90),\\
& & (51,85),\,(52,81),\,(53,77),\,(54,74),\,(55,72),\,(56,70),\,(57,68),\,(58,67),\\
& & (59,65),\,(60,64),\,(61,63),\,(62,62)\\
10 & 24 & (59,923),\,(60,306),\,(61,226),\,(62,188),\,(63,166),\,(64,151),\,(65,139),\\
& & (66,131),\,(67,124),\,(68,118),\,(69,113),\,(70,109),\,(71,106),\,(72,103),\\
& & (73,100),\,(74,98),\ldots,(84,84)\\
10 & 25 & (113, 1420),\ldots,(160,160)\\
\hline
11 & 11 & (12,42),\,(13,25),\,(14,21),\,(15,18),\,(16,17),\,(17,17)\\
11 & 12 & (14,45),\,(15,30),\,(16,25),\,(17,22),\,(18,20),\,(19,19) \\
11 & 13 & (16,72),\,(17,40),\,(18,32),\,(19,28),\,(20,25),\,(21,24),\,(22,23),\,(23,23)\\
11 & 14 & (19,91),\,(20,51),\,(21,40),\,(22,35),\,(23,32),\,(24,30),\,(25,28),\,(26,27),\,(27,27)\\
11 & 15 & (23,241),\,(24,77),\,(25,57),\,(26,49),\,(27,44),\,(28,40),\,(29,38),\,(30,36),\\
& & (31,34),\,(32,33),\,(33,33)\\
11 & 16 & (31,135),\,(32,92),\,(33,76),\,(34,66),\,(35,60),\,(36,56),\,(37,53),\,(38,50),\\
& & (39,48),\,(40,47),\,(41,45),\,(42,44),\,(43,43)\\
11 & 17 & (49,783),\ldots,(70,70)\\
\hline
12 & 12 & (17,66),\,(18,41),\,(19,33),\,(20,29),\,(21,27),\,(22,25),\,(23,24),\,(24,24)\\
12 & 13 & (22,95),\,(23,59),\,(24,48),\,(25,42),\,(26,38),\,(27,36),\,(28,34),\,(29,32),\\
& & (30,31),\,(31,31)\\
12 & 14 & (33,154),\,(34,103),\,(35,84),\,(36,73),\,(37,66),\,(38,62),\,(39,58),\,(40,55),\\
& & (41,53),\,(42,51),\,(43,49),\,(44,48),\,(45,47),\,(46,46)\\
\hline
13 & 13 & (40,185),\,(41,130),\,(42,107),\,(43,94),\,(44,86),\,(45,79),\,(46,75),\,
(47,71),\, \\
& & (48,68),\,(49,65),\,(50,63),\,(51,61),\,(52,60),\,(53,58),\,(54,57),\,(55,56),\,(56,56)
\\
\hline
\end{array}
$$
\caption{}
\label{tab:list}
\end{table}

\end{document}